\newcommand{\R}{\mathbb{R}}
\newcommand{\M}{\mathbb{M}}
\newcommand{\Conf}{\mathrm{Conf}}
\newcommand{\vect}{\mathrm{vect}}
\newcommand{\Mink}{\mathrm{Mink}}
\newcommand{\hy}{\mathbb{H}}
\newcommand{\eu}{\widetilde{Ein}_{1,n}}
\newcommand{\eeu}{\widetilde{Ein}_{1,n-1}}
\title{Maximality of the futures of points in globally hyperbolic maximal conformally flat spacetimes}
\author{Rym SMAÏ \thanks{The author acknowledges funding by the European Research Council under ERC-Advanced grant 101095722.} \thanks{Université Côte d'Azur, Laboratoire Jean Alexandre Dieudonné, Nice, France; Email: rym.smai@univ-cotedazur.fr}}
\date{}
\begin{document}

\newtheorem{theorem}{Theorem}[section]

\newtheorem{definition}{Definition}[section]

\newtheorem{lemma}{Lemma}[section]

\newtheorem{fact}{Fact}[section]

\newtheorem{property}{Property}

\newtheorem{criterion}{Criterion}[section]

\newtheorem{remark}{Remark}[section]

\newtheorem{example}{Example}[section]

\newtheorem{proposition}{Proposition}[section]

\newtheorem{corollary}{Corollary}[section]

\maketitle


\begin{center}
\end{center}

\paragraph*{Abstract.} Let $M$ be a globally hyperbolic conformally spacetime. We prove that the indecomposable past/future sets (abbrev. IPs/IFs) \textemdash in the sense of Penrose, \mbox{Kronheimer} and Geroch \textemdash of the universal cover of $M$ are domains of injectivity of the developing map. This relies on the central observation that diamonds are domains of injectivity of the developing map. Using this, we provide a new proof of a result of completeness by C. Rossi, which notably simplifies the original arguments. Furthermore, we establish that if, in addition, $M$ is maximal, the IPs/IFs are maximal as globally hyperbolic conformally flat spacetimes. More precisely, we show that they are conformally equivalent to regular domains of Minkowski spacetime as defined by F. Bonsante.

\section{Introduction}



We study conformally flat structures on globally hyperbolic spacetimes. The aim of this paper is to hightlight and describe relevant domains of injectivity for the developing map. Building on this, we provide a new proof, using simple arguments, of a result of completeness, established by C. Rossi in her thesis.

\subsection{Conformally flat structures on globally hyperbolic spacetimes} \label{sec intro: conformally flat structures}

A spacetime $(M,g)$ is said to be \emph{conformally flat} if it is locally conformal to Minkowski spacetime. Such a spacetime is naturally equipped with a conformal structure, defined by the conformal class of the metric $g$. A central example of conformally flat spacetimes is the so-called \emph{Einstein universe}, denoted by $Ein_{1,n-1}$. This is the conformal \mbox{compactification} of Minkowski spacetime. In this regard, Einstein universe can be seen as the Lorentzian analogue of the conformal sphere. Its group of conformal transformations is $O(2,n)$. By a Lorentzian version of Liouville's theorem, conformally flat spacetimes of dimension $n \geq 3$ are exactly those equipped with a geometric structure locally modeled on the Einstein universe. In other words, a conformally flat structure on a spacetime is equivalent to the data of an atlas of charts taking their values in $Ein_{1,n-1}$ and whose transition maps are restrictions of elements of $O(2,n)$. 

Similarly to the Riemannian setting, the Lorentzian models of constant curvature \textemdash namely Minkowski spacetime $\R^{1,n-1}$, de Sitter spacetime $dS_{1,n-1}$ and anti-de Sitter spacetime $AdS_{1,n-1}$ \textemdash are conformally equivalent to homogeneous open subsets of the Einstein universe. Therefore, spacetimes of constant curvature are conformally flat.

We focus our study on \emph{globally hyperbolic} spacetimes. This is a natural assumption regarding the causal structure of spacetimes arising from general relativity. According to a classical result due to Geroch, a spacetime is \emph{globally hyperbolic} (abbrev. GH) if there exists a Riemannian hypersurface which is met by every inextensible causal curve exactly once; such a hypersurface is called \emph{a Cauchy hypersurface}. The global hyperbolicity imposes restrictions on the topology of the spacetime. Indeed, a globally hyperbolic spacetime is diffeomorphic to the product of a Cauchy hypersurface and $\R$, and therefore is never compact. To conduct a reasonable study, we restrict ourselves to \emph{maximal} globally hyperbolic spacetimes. A globally hyperbolic spacetime is said to be \emph{maximal} (abbrev. GHM) if it cannot be embedded in a bigger globally hyperbolic spacetime that shares a Cauchy hypersurface with it. We will discuss this notion more precisely in Section \ref{sec intro: future of points}.

A natural procedure for constructing globally hyperbolic maximal conformally flat spacetimes is to consider quotients $\Omega / \Gamma$, where $\Omega$ is an open subset of the universal cover of $Ein_{1,n-1}$, and $\Gamma$ a discrete group of conformal transformations acting freely and properly discontinuously on $\Omega$. Such conformally flat structures are said to be \emph{Kleinian}; we highlight examples arising from Anosov representations in \cite{smai2022anosov}. In particular, when $\Omega$ is the entire space, the conformally flat structure is said to be \emph{complete}. 

In \cite[Theorem 10]{Salvemini2013Maximal}, C. Rossi established a sufficient condition for a globally hyperbolic maximal conformally flat spacetime to be complete, which involves the concept of \emph{conjugate points}. In Section \ref{sec: canonical neighborhoods}, we present a short and elementary proof of this result that notably simplifies the original arguments. We will explore this topic in greater detail in Section \ref{sec intro: domains of injectivity}.  

Before that, let us provide further precision on the conformally flat Lorentzian structures in dimension $n \geq 3$. We mentionned that these structures coincide with $(O(2,n), Ein_{1,n-1})$-structures. It is then a classical fact that they are encoded by the data of a local diffeomorphism $D$ from the universal cover $\tilde{M}$ of $M$ to $\eeu$, called \emph{a developing map}, and a group morphism $\rho$ from the fundamental group of $M$ to $O(2,n)$ equivariant with respect to $D$, called \emph{the holonomy representation}. The developing map encodes the data of an atlas of charts on $M$ taking their values in $Ein_{1,n-1}$. The transition maps, for their parts, are given by the holonomy morphism $\rho$. 

In general, the developing map $D$ is neither injective nor surjective. The case where it is injective (resp. a global diffeomorphism) corresponds to the Kleinian (resp. complete) structures. The goal of this paper is to highlight relevant open subsets of $\tilde{M}$ on which the developing map is injective. Such open subsets will be called \emph{domains of injectivity} of the developing map.

\subsection{Completeness of GHM conformally flat spacetimes}

Given a simply-connected, globally hyperbolic conformally flat spacetime $M$ of dimension $n$, the first natural candidates for domains of injectivity of the developing map are \textbf{the diamonds}. We call diamond any intersection between the causal past of a point $p$ and the causal future of a point $q$, where $p$ and $q$ are two chronologically-related points of $M$ such that $p$ lies in the future of $q$. This intersection is denoted $J(p,q)$. It turns out that the diamonds of a globally hyperbolic spacetime are \emph{compact}. In various respects, diamonds appear as the Lorentzian analogues of closed balls in the Riemannian setting. 

Using elementary topological arguments, we prove the following lemma, which turns out to be central to establishing larger domains of injectivity.

\begin{lemma}[Injectivity on diamonds] \label{lemma intro: diamonds}
The restriction of the developing map to any diamond $J(p,q)$ of $M$ is injective. Moreover, its image is exactly the diamond $J(D(p), D(q))$ of $\eeu$.
\end{lemma}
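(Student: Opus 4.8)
The plan is to exploit the key structural feature of a diamond $J(p,q)$ in a globally hyperbolic spacetime: it is compact, and it carries a natural product-like structure coming from the Cauchy hypersurface through a point. My first move is to reduce the global injectivity statement to a local-to-global argument. The developing map $D$ is a local homeomorphism, so around each point of the compact set $J(p,q)$ it is injective on a small neighborhood; the content of the lemma is that these local injectivities patch together. I would set up the standard apparatus for this: consider the subset of $J(p,q) \times J(p,q)$ consisting of pairs $(x,y)$ with $x \neq y$ and $D(x) = D(y)$, and aim to show it is empty. A direct diagonal/connectedness argument on the product is awkward because $J(p,q)$ need not be simply connected as a manifold-with-corners, so instead I would foliate the diamond.

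The second and main step is the foliation argument. Using global hyperbolicity, write $J(p,q)$ as a union of "horizontal slices": for a Cauchy time function $\tau$ on $M$ (which exists since $M$ is globally hyperbolic), the level sets $S_t = J(p,q) \cap \tau^{-1}(t)$ are compact spacelike hypersurfaces-with-boundary, and $J(p,q) = \bigcup_{t \in [\tau(q), \tau(p)]} S_t$ with $S_{\tau(q)} = \{q\}$, $S_{\tau(p)} = \{p\}$. The strategy is to show by a connectedness-in-$t$ argument that $D$ is injective on $\bigcup_{s \le t} S_s$ for every $t$. The base case $t = \tau(q)$ is trivial. For the inductive/continuity step, suppose $D$ is injective on the past part up to time $t_0$; one shows the set of $t$ for which injectivity persists is open and closed. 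Openness uses compactness of $S_{t_0}$ together with the fact that $D$ is a local homeomorphism: a uniform injectivity radius argument gives a slightly larger slab on which $D$ stays injective. Closedness is a limiting argument, again using compactness of the slices. The crucial point that makes this work — and this is where the causal structure, not just the topology, enters — is that two distinct points $x \ne y$ with $D(x) = D(y)$ cannot be causally related: if $x \in J^+(y)$, then $D(x) \in J^+(D(y))$ in $\eeu$, but in $\eeu$ a point cannot lie in its own causal future (this is precisely why one passes to the universal cover $\eeu$ rather than working in $Ein_{1,n-1}$), so $D(x) = D(y)$ forces $x = y$. Hence any failure of injectivity would be between causally unrelated points, which must lie on overlapping pieces of the same or comparable slices; compactness then pins down a "first time" of failure and a local homeomorphism contradiction.

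For the statement about the image, I would argue both inclusions. The inclusion $D(J(p,q)) \subseteq J(D(p), D(q))$ is immediate from the fact that $D$ is causal (it sends causal curves to causal curves, being a local conformal diffeomorphism onto an open subset of $\eeu$): any $x \in J(p,q)$ satisfies $q \le x \le p$, hence $D(q) \le D(x) \le D(p)$. For the reverse inclusion, I would use that $D$ restricted to $J(p,q)$ is a continuous injection from a compact space, hence a homeomorphism onto its image, so $D(J(p,q))$ is a compact subset of $\eeu$; I then show it is also open in $J(D(p), D(q))$ by a domain-invariance/local-homeomorphism argument at interior points and a boundary-matching argument along the null boundaries emanating from $D(p)$ and $D(q)$. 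Since $J(D(p),D(q))$ is connected, the image must be all of it.

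The step I expect to be the main obstacle is the closedness part of the continuity argument in the foliation — controlling what happens as $t$ approaches a putative first failure time, where the injectivity radius of $D$ along the slices could a priori degenerate. The remedy is to work on the compact set $J(p,q)$ throughout: the local homeomorphism $D$ has a lower bound on its "injectivity scale" over any compact set (made precise via a finite cover by charts in which $D$ is a diffeomorphism onto its image), and combined with the causal obstruction above — distinct coincident points must be causally unrelated and hence, by a short causality argument using that the slices shrink to the single points $p$ and $q$, cannot both survive into a sufficiently thin slab — this rules out a first failure time. I would also double-check the edge cases where $p$ or $q$ lies on the boundary of a chart, handled by shrinking the diamond slightly and taking a limit.
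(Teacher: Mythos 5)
Your reduction of the image inclusion $D(J(p,q)) \subseteq J(D(p),D(q))$ to the fact that $D$ sends causal curves to causal curves is correct, as is the observation that two distinct points with the same image cannot be causally related (their images would bound a closed causal loop in $\eeu$). But the core of your injectivity argument has a genuine gap: the open--closed scheme along the slices $S_t$ never rules out a collision $D(x)=D(y)$ between two \emph{spacelike-separated, far apart} points $x\neq y$ lying on the same intermediate slice $S_{t_0}$. The uniform injectivity scale over the compact diamond only excludes collisions between \emph{nearby} points; the causal obstruction only excludes collisions between \emph{causally related} points; and the remark that the slices degenerate to $\{p\}$ and $\{q\}$ only helps near the two tips, not at a putative first failure time in the middle of the interval. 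Moreover, for the closedness step you would need the images $D(S_t)$ to be coherently organized in $\eeu$ (achronal, or level sets of a time function pulled back from $\eeu$), which an arbitrary Cauchy time function $\tau$ on $M$ does not provide; this is exactly the extra hypothesis imposed in the paper's Lemma \ref{lemma: nc injectivity}, and you cannot invoke it here because injectivity of $D$ on a slice is precisely what is at stake.

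The paper's proof avoids all of this with a three-line covering-space argument: since $J(p,q)$ is compact, the restriction $D_{|J(p,q)}\colon J(p,q)\to J(D(p),D(q))$ is a \emph{proper} local homeomorphism, hence a covering map; the target diamond of $\eeu$ is simply connected (for $n\ge 3$), so the covering is trivial and $D_{|J(p,q)}$ is a homeomorphism onto $J(D(p),D(q))$. This yields injectivity and the description of the image simultaneously. Your final paragraph on the image (compact image, open by domain invariance, connected target) is in effect the surjectivity half of this covering argument; if you combine it with properness instead of with a slicewise induction, the whole lemma collapses to the paper's proof.
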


This lemma allows us to give an elementary and short proof of the following statement established by Rossi:

\begin{theorem}[{\cite[Theorem 10]{Salvemini2013Maximal}}] \label{thm intro: conjugate points}
If $M$ has conjugate points then $M$ admits a \mbox{$(n-1)$-} topological sphere as a Cauchy hypersurface and therefore, the developing map is a diffeomorphism on its image.
\end{theorem}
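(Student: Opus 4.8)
The plan is to read everything off in the model $\eeu$ via the developing map and transport it back with Lemma~\ref{lemma intro: diamonds}. Since $M$ is simply connected the holonomy is trivial, so $D\colon M\to\eeu$ is an honest local conformal diffeomorphism and I am free to postcompose $D$ with any element of $O(2,n)$. First I would unwind the hypothesis: ``$M$ has conjugate points'' means that some future null geodesic $\eta$ issuing from a point $p$ carries a conjugate point, and since conjugacy along a null geodesic is a conformal invariant, $D\circ\eta$ is, up to reparametrisation, a null geodesic of $\eeu$ through $D(p)$ with a conjugate point along it. In $\eeu=\R\times S^{n-1}$ every null geodesic from $a=(t_0,x_0)$ refocuses, its first conjugate point being $(t_0+\pi,-x_0)$, so the first conjugate point $q$ of $p$ along $\eta$ satisfies $D(q)=(\pi,-N)$ once I normalise $D(p)=(0,N)$. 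The elementary feature of the model I want is that, because $d_{S^{n-1}}(N,x)+d_{S^{n-1}}(x,-N)=\pi$ for every $x$, the future light cone of $(0,N)$ coincides with the past light cone of $(\pi,-N)$: this common set $\mathcal C_0=\{(d_{S^{n-1}}(N,x),x):x\in S^{n-1}\}$ is a topological $(n-1)$-sphere, achronal and edgeless, and $\eeu=I^+((0,N))\sqcup\mathcal C_0\sqcup I^-((\pi,-N))$.

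Next I would transport $\mathcal C_0$ to $M$ through a diamond. Choosing $q'\in I^+(q)\subseteq I^+(p)$, the set $J:=J(q',p)$ is a genuine diamond, so by Lemma~\ref{lemma intro: diamonds} $D|_J$ is injective with image $J(D(q'),D(p))$; being a continuous bijection of compacta it is a homeomorphism, and since $D$ and $D^{-1}$ send causal curves to causal curves it is an order isomorphism. As $\mathcal C_0\subseteq J^-((\pi,-N))\subseteq J^-(D(q'))$ we have $\mathcal C_0\subseteq J(D(q'),D(p))$, and the order isomorphism carries $\partial J^+(p)\cap J$ onto $\partial J^+(D(p))\cap J(D(q'),D(p))=\mathcal C_0$. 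I would then argue that this is \emph{all} of $\partial J^+(p)$: the set $\partial J^+(p)\cap J=\partial J^+(p)\cap I^-(q')$ is open in $\partial J^+(p)$, while being homeomorphic to the compact $\mathcal C_0$ it is closed, and $\partial J^+(p)$ is connected because each of its points is joined to $p$ by a null generator lying inside it; hence $\partial J^+(p)\cap J=\partial J^+(p)$. Thus $\Sigma:=\partial J^+(p)\subseteq J$ and $D|_\Sigma\colon\Sigma\to\mathcal C_0$ is a homeomorphism, so $\Sigma$ is a topological $(n-1)$-sphere. Running the mirror argument on the past light cone of $q$ gives $\partial J^-(q)\cong S^{n-1}$ as well, and combined with $\partial J^+(p)\subseteq J^-(q)$, $\partial J^+(p)\cap I^-(q)=\emptyset$ and invariance of domain this forces $\Sigma=\partial J^+(p)=\partial J^-(q)$ and the splitting $M=I^+(p)\sqcup\Sigma\sqcup I^-(q)$.

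To finish: $\Sigma$ is compact, achronal and edgeless in the globally hyperbolic $M$, hence a Cauchy hypersurface, and it is an $(n-1)$-sphere — the first assertion (one may smooth it to a spacelike Cauchy hypersurface if desired). For the second, $D$ is injective: it is injective on $\Sigma$ by the previous step, and injective on $I^+(p)$ and on $I^-(q)$ since these are indecomposable future/past sets of $M$, which are domains of injectivity of $D$ (the consequence of Lemma~\ref{lemma intro: diamonds} underlying the paper's main results), or directly by gluing the local inverses $D|_{J}^{-1}$ over the diamonds $J$ contained in them; and the three images $D(I^+(p))\subseteq I^+((0,N))$, $D(\Sigma)=\mathcal C_0$, $D(I^-(q))\subseteq I^-((\pi,-N))$ are pairwise disjoint, so $D$ is injective on $M=I^+(p)\sqcup\Sigma\sqcup I^-(q)$. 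An injective local diffeomorphism is a diffeomorphism onto its open image, which is the claim.

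The main obstacle is the ``closing up'' in the second paragraph: one must rule out that $\partial J^+(p)$ escapes the diamond $J(q',p)$, and this is precisely where the connectedness of $\partial J^+(p)$ together with the fact that $D|_J$ is an \emph{order} isomorphism — not merely a homeomorphism — are essential. A secondary technical point is the injectivity of $D$ on the indecomposable sets $I^+(p),I^-(q)$: carrying out the diamond gluing requires, for instance, knowing that the intersection of a light cone of $\eeu$ with two past cones is connected. Note that maximality of $M$ is not actually needed for injectivity; it guarantees in addition that $D$ is onto $\eeu$, i.e. that the conformal structure is complete.
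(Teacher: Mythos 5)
Your construction of the Cauchy sphere is essentially the paper's: both arguments identify the common lightcone $\mathcal{C}_0=\partial I^+(D(p))=\partial I^-(D(q))$ joining the two conjugate points in $\eeu$, enclose it in a diamond on which $D$ is a homeomorphism onto the corresponding diamond of $\eeu$ (Lemma \ref{lemma: D is injective on diamonds}), and pull it back to a compact achronal edgeless hypersurface of $M$, which is then a Cauchy hypersurface. The paper does this more economically: it encloses the conjugate pair in a slightly larger \emph{open} diamond and simply takes $S$ to be the preimage of $\mathcal{C}_0$ there; your additional clopen argument identifying $S$ with all of $\partial J^+(p)$ is correct (it needs the ruling of $\partial J^+(p)$ by null generators through $p$ for connectedness) but is not required for the statement.

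The genuine gap is in your final step. To get injectivity on all of $M$ you split $M=I^+(p)\sqcup\Sigma\sqcup I^-(q)$ and invoke injectivity of $D$ on the IF $I^+(p)$ and the IP $I^-(q)$. In this paper, that injectivity (Proposition \ref{prop: injectivity on the future of p}) is proved only under the standing hypothesis that $M$ has \emph{no} conjugate points, and its proof uses that hypothesis essentially: it places the image diamonds inside a single affine chart $\Mink_\pm(D(p))$, where they are convex cones and the connectedness needed for the Lemme des assiettes (Lemma \ref{lemme des assiettes}) is immediate. Here $M$ has conjugate points by assumption, so that result cannot be cited; and your fallback of gluing local inverses over the diamonds $I(y,p)$ requires precisely the connectedness of $I^+(D(p))\cap I^-(\bar{y})\cap I^-(\bar{y}')$ in $\eeu$ when these diamonds are no longer contained in an affine chart \textemdash{} which you flag but do not prove. (It does hold: one can flow any point of the triple intersection down along the past lightcone of that point toward the vertex $D(p)$, staying in all three sets; but this must be argued.) The paper sidesteps the whole issue with Lemma \ref{lemma: nc injectivity}: once one has a Cauchy hypersurface $S$ on which $D$ is injective with achronal image, pulling back the foliation of $\eeu\simeq\mathbb{S}^{n-1}\times\R$ by the timelike lines $\{x\}\times\R$ forces global injectivity, with no case analysis on $I^\pm$. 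Either prove the connectedness claim or replace your third paragraph by this foliation argument.
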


Two points $p$ and $q$ of $M$ are said to be \emph{conjugate} if there exist two distinct lightlike geodesics connecting them. When $M$ is maximal, Theorem \ref{thm intro: conjugate points} immediatly implies that $M$ is conformally equivalent to $\eeu$. 

When $M$ is maximal but not simply-connected, Theorem \ref{thm intro: conjugate points} says that the existence of conjugate points in the universal cover of $M$ is a sufficient condition of completeness. In this case, the fundamental group of $M$ preserves a compact hypersurface and therefore, is finite. We deduce the following corollary.

\begin{corollary} \label{cor intro: elliptic}
Let $M$ be a globally hyperbolic maximal conformally flat spacetime. If the universal cover of $M$ admits conjugate points, then $M$ is a finite quotient of $\eeu$.
\end{corollary}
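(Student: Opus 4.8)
The plan is to deduce Corollary~\ref{cor intro: elliptic} directly from Theorem~\ref{thm intro: conjugate points} applied to the universal cover $\tilde M$, which is again a globally hyperbolic conformally flat spacetime (simply connected by construction). First I would observe that $\tilde M$ is globally hyperbolic maximal: global hyperbolicity lifts to covers, and maximality of $M$ forces maximality of $\tilde M$ since any Cauchy-compatible globally hyperbolic enlargement of $\tilde M$ would, after quotienting by $\pi_1(M)$ (whose action extends, as the holonomy is defined on the enlargement by equivariance of the developing map), produce an enlargement of $M$. Granting that $\tilde M$ has conjugate points, Theorem~\ref{thm intro: conjugate points} gives that $\tilde M$ admits an $(n-1)$-sphere $S$ as a Cauchy hypersurface and that the developing map $D \colon \tilde M \to \eeu$ is a diffeomorphism onto its image; combined with maximality this means $D$ is a global diffeomorphism onto $\eeu$, i.e. the structure on $M$ is complete.

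Next I would analyze the deck group $\G = \pi_1(M)$, which acts freely and properly discontinuously on $\tilde M \cong \eeu$ by conformal transformations, with $M = \eeu / \G$. Because $\tilde M$ has $S^{n-1}$ as a Cauchy hypersurface and any conformal (hence causal) automorphism of a globally hyperbolic spacetime permutes the Cauchy hypersurfaces in the appropriate sense, one shows that $\G$ preserves a compact spacelike hypersurface: for instance, the image under $D$ of the sphere $S$, or more robustly a level set of a $\G$-invariant Cauchy time function obtained by averaging. A discrete group acting freely, properly discontinuously, and cocompactly (the quotient of a compact Cauchy hypersurface by $\G$ is the Cauchy hypersurface of $M$, which is compact) on a manifold while preserving a compact set must be finite. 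Hence $\G$ is finite and $M = \eeu / \G$ is a finite quotient of $\eeu$, as claimed.

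The main obstacle I expect is the maximality transfer: carefully justifying that a globally hyperbolic Cauchy-compatible extension of $\tilde M$ descends to one of $M$, which requires that the $\pi_1(M)$-action extends to the larger spacetime. This follows from the fact that the extension is determined by its developing map, which extends the developing map of $\tilde M$, so the holonomy representation $\rho$ still acts and the action remains free and properly discontinuous on the extension because it does so on a Cauchy hypersurface shared with $\tilde M$ (global hyperbolicity then propagates proper discontinuity to the whole extension via the product structure). A secondary subtlety is ensuring $\G$ actually fixes a \emph{compact} invariant set rather than merely permuting Cauchy hypersurfaces; invoking a $\G$-invariant Cauchy temporal function (which exists by averaging a Cauchy temporal function over the finite-type quotient, or by the standard equivariant smoothing of Bernal--Sánchez type results) and taking a regular level set resolves this cleanly.
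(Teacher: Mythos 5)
Your outline follows the paper's intended argument in its essentials --- apply the conjugate-points result to $\tilde M$, deduce finiteness of $\G=\pi_1(M)$ from compactness of the Cauchy hypersurfaces, and invoke maximality to recover all of $\eeu$ --- but the step you yourself single out as the main obstacle, the transfer of maximality from $M$ to $\tilde M$, is not justified by the argument you give. If $N$ is a Cauchy-extension of $\tilde M$, its developing map $D_N$ does extend $D$, and $\rho(\gamma)\circ D_N$ is again a developing map of $N$; but two developing maps of the same structure differ by postcomposition with an element of $\Conf(\eeu)$ acting on the \emph{target}, and this produces no diffeomorphism $\tilde\gamma$ of $N$ satisfying $D_N\circ\tilde\gamma=\rho(\gamma)\circ D_N$. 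Deck transformations of $\tilde M$ simply need not extend to an arbitrary Cauchy-extension; the correct tool for extending the $\G$-action is the uniqueness of the \emph{maximal} extension up to conformal diffeomorphism (Rossi's result, or the functoriality in Fact \ref{fact intro}), which you do not invoke.

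The paper's route sidesteps this entirely, and so can you. Proposition \ref{prop: connjugate points} applied to $\tilde M$ already gives that $D$ is injective on all of $\tilde M$ and that $\tilde M$ admits a compact Cauchy hypersurface $S$; since $D(S)$ is a compact achronal edgeless topological hypersurface of $\eeu$, it is a Cauchy hypersurface there, so $D$ is a conformal Cauchy-embedding of $\tilde M$ into $\eeu$ with no appeal to maximality of $\tilde M$. The preimage in $\tilde M$ of a Cauchy hypersurface of $M$ is a compact $\G$-invariant hypersurface on which $\G$ acts freely and properly discontinuously, so $\G$ is finite (essentially your argument). Now $\rho$ is faithful because $D$ is injective and equivariant, $\rho(\G)$ is a finite subgroup of $\Conf(\eeu)$ acting on all of $\eeu$, and $\eeu/\rho(\G)$ is a Cauchy-extension of $M=\tilde M/\G$ itself; maximality of $M$ --- the hypothesis you actually have --- then yields $M\cong\eeu/\rho(\G)$. (Both routes still owe a word on why $\rho(\G)$ acts freely on all of $\eeu$ and not merely on $D(\tilde M)$, a point the paper also leaves implicit.)
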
 

By analogy with the Riemannian setting, a conformally flat spacetime which is conformally equivalent to a finite quotient of $\eeu$ will be said to be \emph{elliptic}.

\subsection{Domains of injectivity of the developing map} \label{sec intro: domains of injectivity}

According to Corollary \ref{cor intro: elliptic}, we will henceforth consider simply-connected, globally hyperbolic conformally flat spacetimes that do not have conjugate points. Building on the fact that the diamonds are domains of injectivity for the developing map, our goal is to highlight \emph{larger} domains of injectivity.

\subsubsection{A natural approach that fails: the canonical neighborhoods}

A natural approach consists of mimicking the construction of Kulkarni and Pinkall in the context of conformally flat Riemannian manifolds. The authors define the canonical neighborhood of a point $p$ in a simply-connected conformally flat Riemannian manifold as the union of all open balls containing $p$, and they prove that these canonical neighborhoods are domains of injectivity for the developing map. As mentioned above, diamonds are suitable analogues of balls in the Lorentzian setting. Therefore, one can define the canonical neighborhood of a point $p$ in a simply-connected, globally hyperbolic conformally flat spacetime $M$ as the union of the interiors of diamonds containing $p$.

Since the developing map is injective on diamonds, a sufficient condition for it to be injective on the canonical neighborhood of $p$ is that the intersection of the images, under the developing map, of the interiors of any two diamonds in $M$ is connected (see the classical \emph{Lemme des assiettes} \ref{lemme des assiettes}). The image under the developing map of the interior of a diamond in $M$ is the interior of a diamond in $\eeu$ (see Corollary \ref{cor: D is injective on interior of diamonds}). Therefore, what we need to verify is that the intersection of the interiors of two diamonds in $\eeu$ is connected. However, it turns out that this is not always the case. In Section \ref{sec: intersection of diamonds}, we present an example where the intersection is disconnected. The failure of this connectedness allows to construct examples of globally hyperbolic conformally flat spacetimes where the developing map is not injective on the canonical neighborhood of a point. Consequently, the naive analogy of the canonical neighborhoods fails in the Lorentzian setting. At this point, we still don't know what a good analogy would be. Nevertheless, the injectivity of the developing map on the diamonds allows to establish the injectivity on other natural domains regarding the causal structure: \emph{the futures and the past of points}.

\subsubsection{Futures/pasts of points in a non-elliptic GHM conformally flat spacetime}\label{sec intro: future of points}

In Section \ref{sec intro: future of points}, we prove that the futures \textemdash and by symmetry the pasts \textemdash of points are domains of injectivity for the developing map. This follows directly from our central Lemma \ref{lemma intro: diamonds}. The key idea is that the chronological future of a point $p$ in $M$ can be described as the union of the interiors of diamonds $J(p,q)$, where $q$ is a point in $M$ that lies in the chronological future of $p$. Using again the classical \emph{Lemme des assiettes}, we verify that the the intersection of the interiors of any two diamonds of $\eeu$ sharing the same past vertex, is connected.

We deduce from the injectivity of the developing map on the futures of points that it is also injective on the chronological future of any past-inextensible causal curve of~$M$. These domains are significant, as they correspond to the terminal indecomposable future sets (abbrev. TIFs) described by Kronheimer, Penrose and Geroch in their paper \cite{Kronheimer}, defining what these authors call \emph{the past causal boundary of~$M$}. By symmetry, the chronological past of any future-inextensible causal curve is also a domain of injectivity for the developing map, corresponding to the terminal indecomposable past sets (abbrev. TIPs) and defining \emph{the future causal boundary of $M$}. The futures (resp. the pasts) of points and the TIFs (resp. TIPs) can be grouped under the unified concept of indecomposable future (resp. past) sets (abbrev. IFs (resp. IPs)). We provide a brief overview of these concepts in Section \ref{sec: causal completion}.

\subsection{Maximality of the IPs/IFs in a non-elliptic GHM conformally flat spacetime} \label{sec intro: maximality}

Let $M$ be a simply-connected, globally hyperbolic conformally flat spacetime that do not have conjugate points. A relevant property of the IFs/IPs is that they are globally hyperbolic. Let us now define more precisely the notion of maximality for globally hyperbolic conformally flat spacetimes introduced in Section \ref{sec intro: conformally flat structures}. There is a natural order relation on globally hyperbolic conformally flat spacetimes defined as follows. 

Given two globally hyperbolic conformally flat spacetimes, $M$ and $N$, we say that $N$ is \emph{a Cauchy-extension} of $M$ if there exists a conformal embedding $f$ from $M$ to $N$ that maps any Cauchy hypersurface of $M$ on a Cauchy hypersurface of $N$. The map $f$ is called \emph{a conformal Cauchy-embedding}. The spacetime $M$ is said to be \emph{maximal} if any conformal Cauchy-embedding from $M$ to any conformally flat globally hyperbolic spacetime $N$ is surjective. Using Zorn lemma, C. Rossi proved in \cite[Sec. 3]{Salvemini2013Maximal} that any globally hyperbolic conformally flat spacetime admits a conformally flat maximal extension, which is unique up to conformal diffeomorphisms. We provide a constructive proof of this result in \cite[Sec. 4 and 5]{smai2023enveloping}.

Since the IFs/IPs are globally hyperbolic, it is natural to inquire whether they are maximal. It is important to emphasize that the notion of maximality we are discussing depends on the Cauchy hypersurfaces of the considered spacetime. Therefore, the ambiant spacetime $M$ is, in general, not a maximal extension of the chronological future of a point $p$, as their respective Cauchy hypersurfaces are completely independent. In fact, we establish that the IFs/IPs are maximal if the ambiant spacetime $M$ is itself maximal. 

\begin{theorem} \label{thm intro: maximality}
Any IF/IP of $M$ is a \textbf{maximal} globally hyperbolic conformally flat spacetime.
\end{theorem}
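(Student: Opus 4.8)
By reversing the time orientation one interchanges IPs and IFs while keeping $M$ simply connected, globally hyperbolic, maximal, conformally flat and conjugate-point-free, so it is enough to treat an IF $\mathcal{F}$ of $M$. By Section \ref{sec intro: future of points}, $\mathcal{F}$ is a globally hyperbolic conformally flat spacetime on which the developing map $D$ is injective; hence $\mathcal{F}$ is conformally equivalent to the open subset $\Omega:=D(\mathcal{F})$ of $\eeu$, endowed with the conformal structure and time orientation inherited from $\eeu$. The plan is to show that $\Omega$ is a regular domain of Minkowski spacetime in Bonsante's sense, and then that such regular domains are maximal as globally hyperbolic conformally flat spacetimes; combining the two gives the theorem.

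The first step is to place $\Omega$ in an affine chart of $\eeu$. By Lemma \ref{lemma intro: diamonds}, $D$ restricts to a homeomorphism from each compact diamond $J_{M}(q,q')$ onto $J(D(q),D(q'))$, and unparametrised lightlike geodesics are conformally invariant; since $M$ has no conjugate points, no diamond of $M$ can develop onto a diamond of $\eeu$ containing a pair of conjugate points, for otherwise $D^{-1}$ would exhibit conjugate points in $M$. Writing $\mathcal{F}$ as the union of the interiors of the diamonds $J_{M}(q,p)$, $q\in I^{+}(p)$ (the case $\mathcal{F}=I^{+}(p)$), or as an increasing union of such families (the TIF case), all these diamonds develop into the Minkowski patch $\mathcal{M}\cong\R^{1,n-1}$ of $\eeu$ determined by the future cone of $\hat{p}:=D(p)$; hence $\Omega\subseteq\mathcal{M}$. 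As $\Omega$ is then a union of interiors of diamonds of $\mathcal{M}$ with common past vertex $\hat{p}$, its past boundary is achronal and is a union of truncated null cones, hence ruled by inextensible lightlike geodesics of $\mathcal{M}$; this realises $\Omega$ inside the regular domain $\Omega':=\bigcap_{P}I^{+}_{\mathcal{M}}(P)$, where $P$ ranges over the null support hyperplanes along the past boundary of $\Omega$ (the degenerate one-point case $\Omega'=I^{+}_{\mathcal{M}}(\hat{p})$ being allowed). It remains to prove $\Omega=\Omega'$, and this is where I would use that $M$ is maximal: a strict inclusion $\Omega\subsetneq\Omega'$ is a conformal Cauchy-embedding, and since $\mathcal{F}$ is a future set of $M$, gluing $\Omega'$ onto $M$ along $\mathcal{F}$ yields a globally hyperbolic conformally flat spacetime strictly containing $M$ via a conformal Cauchy-embedding, contradicting the maximality of $M$.

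For the second step, let $\Omega$ now be a regular domain of $\mathcal{M}\cong\R^{1,n-1}$ and let $f\colon\mathcal{F}=\Omega\hookrightarrow N$ be a conformal Cauchy-embedding into a globally hyperbolic conformally flat spacetime $N$; I must show $f$ is onto. As $\Omega$ is simply connected and $f$ matches Cauchy hypersurfaces, $N$ is simply connected; choose a developing map $D_{N}\colon N\to\eeu$ with $D_{N}\circ f$ equal to the inclusion $\Omega\hookrightarrow\mathcal{M}\subset\eeu$. Running the diamond argument of the first step inside $N$ — every point of $N$ lies in a diamond of $N$ having a vertex in $f(\Omega)$, and the conjugate-point-free property propagates across $f(\Omega)$ — shows that $D_{N}$ is injective with image an open globally hyperbolic subset $\Omega_{N}\subseteq\mathcal{M}$ with $\Omega\subseteq\Omega_{N}$; moreover, since a Cauchy hypersurface of $\mathcal{F}$ is already Cauchy in $N$, no point can be added to the past of $\Omega$, so $\Omega$ and $\Omega_{N}$ have the same past boundary. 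By the maximality of Bonsante's regular domains within Minkowski space, $\Omega_{N}=\Omega'=\Omega$, hence $N=f(\Omega)$. This shows $\Omega$, and therefore $\mathcal{F}$, is maximal, and by symmetry the same holds for IPs.

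The hard part is the identification $\Omega=\Omega'$ in the first step: proving that the developed image of an IF is \emph{precisely} a regular domain, and not merely a globally hyperbolic proper sub-domain of the regular domain cut out by its past-boundary data. This is the single place where the hypothesis that $M$ is itself maximal is genuinely used, and making the accompanying gluing construction rigorous — checking that $M\cup_{\mathcal{F}}\Omega'$ is Hausdorff, is a conformally flat spacetime, is globally hyperbolic, and keeps a Cauchy hypersurface of $M$ Cauchy — requires careful use of the fact that $\mathcal{F}$ is a future set of $M$ with the right causal-convexity along its past boundary. By contrast, the conformal maximality of an honest regular domain is comparatively soft once the injectivity of the developing map and the inclusion of the image in a Minkowski patch are in hand, since it reduces to the maximality of regular domains inside Minkowski space.
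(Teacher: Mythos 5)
Your overall strategy diverges from the paper's, and the two places where you yourself flag ``the hard part'' are genuine gaps, not just technicalities. The decisive one is the gluing argument for $\Omega=\Omega'$. To contradict the maximality of $M$, the glued spacetime $M\cup_{\mathcal{F}}\Omega'$ would have to be a \emph{Cauchy}-extension of $M$, i.e.\ a Cauchy hypersurface $S$ of $M$ would have to remain Cauchy after the new points of $\Omega'\setminus\Omega$ are attached. Nothing forces this: an inextensible causal curve through a new point need not meet $S$ (and need not even enter $\mathcal{F}$), and the gluing along the open set $\mathcal{F}$ can fail to be Hausdorff wherever $D(\partial\mathcal{F}\cap M)$ meets $\Omega'$. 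This is precisely the trap the paper warns about in Section \ref{sec intro: maximality}: the Cauchy hypersurfaces of $M$ and of $\mathcal{F}$ are independent, so an extension of $\mathcal{F}$ does not produce a Cauchy-extension of $M$. The paper's argument runs in the opposite direction: it invokes Fact \ref{fact intro} (maximal extensions respect causally convex inclusions, a nontrivial result from a separate paper) twice, first with ambient space $\M^n$ to realize the maximal extension $\hat{U}$ of the IP/IF $U$ as a future- (resp.\ past-) complete causally convex open subset of $\M^n$ containing $D(U)$, and then with ambient space $M$ (here maximality of $M$ enters, since $M$ equals its own maximal extension) to view $\hat{U}$ as a causally convex open subset of $M$ containing $U$. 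The final step, which is entirely absent from your proposal, is the indecomposability of $U$: any $q\in\hat{U}$ is chronologically related to some $r\in U$, and either future-completeness or the defining property $I^{-}(r)\subset U$ of a past set (resp.\ $I^{+}(r)\subset U$ of a future set) forces $q\in U$. Your phrase ``since $\mathcal{F}$ is a future set of $M$'' gestures at this but never uses it where it is actually load-bearing.

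The second step has a parallel problem. The claim that ``no point can be added to the past of $\Omega$'' because a Cauchy hypersurface of $\mathcal{F}$ is Cauchy in $N$ does not hold as stated: an inextensible causal curve through an added past point can still enter $\Omega$ and cross the hypersurface exactly once, so the Cauchy condition alone does not exclude such points; one must exhibit, for each candidate point outside the regular domain, an inextensible causal curve avoiding the hypersurface (e.g.\ a null line below a support hyperplane), and then check this curve survives in $N$. Moreover, the appeal to ``the maximality of Bonsante's regular domains'' as globally hyperbolic \emph{conformally flat} spacetimes is essentially the statement being proved, so it cannot be quoted without proof here. In short, your two black boxes are exactly the content that the paper supplies via Fact \ref{fact intro} together with the IP/IF property, and neither is filled by the arguments you sketch.
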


This result was first established by C. Rossi in her thesis (see \cite[Chap. 6, Prop. 3.6 \& Thm. 3.9]{salveminithesis}). In fact, she separately proves that the futures of points are maximal (see \cite[Chap. 6, Prop. 3.6]{salveminithesis}), and then that the TIFs are also maximal (see \cite[Chap. 6, Thm. 3.9]{salveminithesis}). In Section \ref{sec: maximality of IPs}, we propose a synthetic proof that establishes both of these results simultaneously. Our proof relies on the following property of the maximal extensions that we proved in a previous paper and which says, in short, that \emph{the functor maximal extension} is compatible with the inclusion: 

\begin{fact}[{\cite[Theorem 5]{smai2023enveloping}}]\label{fact intro}
Let $V$ be a globally hyperbolic conformally flat spacetime and let $U$ be a causally convex open subset of $V$. Then, the maximal extension of $U$ is conformally equivalent to a causally convex open subset of the maximal extension of $V$.
\end{fact}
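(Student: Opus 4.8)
The plan is to realize $\mathrm{Max}(U)$ directly inside $\mathrm{Max}(V)$ as a suitable Cauchy development and then to show that this development is already maximal. I will use two standing facts from the theory of maximal extensions recalled above: a conformal Cauchy-embedding has causally convex image, and if $A \hookrightarrow B$ is a conformal Cauchy-embedding then $\mathrm{Max}(A)$ and $\mathrm{Max}(B)$ are conformally equivalent (uniqueness of the maximal extension). First I would compose the inclusion $U \hookrightarrow V$ with the canonical Cauchy-embedding $V \hookrightarrow \mathrm{Max}(V)$, so that $U$ sits inside the maximal spacetime $\mathrm{Max}(V)$. Since $V$ is causally convex in $\mathrm{Max}(V)$ and $U$ is causally convex in $V$, the subset $U$ is causally convex in $\mathrm{Max}(V)$. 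Fixing a Cauchy hypersurface $S$ of $U$, this nested causal convexity forces $S$ to be acausal in all of $\mathrm{Max}(V)$: any causal curve of $\mathrm{Max}(V)$ joining two points of $S$ would be trapped first in $V$ and then in $U$, contradicting the acausality of $S$ in $U$.

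Next I would set $\hat U := \mathcal{D}_{\mathrm{Max}(V)}(S)$, the Cauchy development of $S$ in $\mathrm{Max}(V)$. This is an open, globally hyperbolic, causally convex subset of $\mathrm{Max}(V)$ admitting $S$ as a Cauchy hypersurface. The inclusion $U \subseteq \hat U$ follows from a standard inextensibility argument: given $p \in U$ and an inextensible causal curve $\gamma$ of $\mathrm{Max}(V)$ through $p$, the connected component of $\gamma \cap U$ containing $p$ has no endpoint inside the open set $U$, hence is inextensible in $U$ and therefore meets $S$; thus $p \in \hat U$. Since $U$ and $\hat U$ share the Cauchy hypersurface $S$ and $U$ is causally convex in $\hat U$ (being causally convex in the ambient $\mathrm{Max}(V)$), the inclusion $U \hookrightarrow \hat U$ is a conformal Cauchy-embedding, whence $\mathrm{Max}(U) \cong \mathrm{Max}(\hat U)$.

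It remains to prove that $\hat U$ is itself maximal, for then $\mathrm{Max}(U) \cong \mathrm{Max}(\hat U) = \hat U$ is the desired causally convex open subset of $\mathrm{Max}(V)$. Suppose, for contradiction, that there is a non-surjective conformal Cauchy-embedding $j \colon \hat U \hookrightarrow N$ into a globally hyperbolic conformally flat spacetime $N$. I would form the amalgam $W := \mathrm{Max}(V) \cup_{\hat U} N$, gluing $\mathrm{Max}(V)$ and $N$ along the common causally convex open set $\hat U$ (open in $\mathrm{Max}(V)$, and with causally convex image $j(\hat U)$ in $N$). The two conformally flat structures agree on the overlap, so $W$ carries a conformally flat structure; if $W$ is a globally hyperbolic spacetime in which $\mathrm{Max}(V)$ is Cauchy-embedded, then, $N$ being strictly larger than $j(\hat U)$, the spacetime $W$ is a proper Cauchy-extension of $\mathrm{Max}(V)$, contradicting its maximality. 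Hence no such $N$ exists and $\hat U$ is maximal.

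The main obstacle is precisely the construction and control of the amalgam $W$: one must verify that the pushout is Hausdorff (the delicate point, since naive gluings of spacetimes routinely produce non-Hausdorff branching along the frontier of the identified region), that it is globally hyperbolic, and that a Cauchy hypersurface of $\mathrm{Max}(V)$ remains a Cauchy hypersurface of $W$ — the last amounting to showing that the points contributed by $N$ create no inextensible causal curve avoiding that hypersurface. This is the heart of the enveloping and gluing machinery; alternatively, one may bypass it by passing to the universal cover and identifying $\hat U$, through the developing map, with a regular domain in the sense of Bonsante, which is maximal by construction.
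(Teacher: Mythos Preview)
This statement is not proved in the present paper: it is imported verbatim from \cite{smai2023enveloping} (it appears here twice, as Fact~\ref{fact intro} and as Fact~\ref{fact: max. ext. respect inclusion}, each time with a bare citation and no argument). So there is no ``paper's own proof'' to compare against; I can only assess your sketch on its merits.

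Your outline is the natural one and the first two thirds are fine: the chain of causal convexities $U\subset V\subset \mathrm{Max}(V)$, the acausality of $S$ in $\mathrm{Max}(V)$, the identification $\hat U=\mathcal{D}_{\mathrm{Max}(V)}(S)$, the inclusion $U\subset\hat U$ as a Cauchy-embedding, and the reduction to showing $\hat U$ is maximal are all correct and standard. The genuine gap is exactly where you locate it: the amalgam $W=\mathrm{Max}(V)\cup_{\hat U}N$. You have not shown that $W$ is Hausdorff, and this is not a formality \textemdash\ it is precisely the content of the theorem. Points on the Cauchy horizon $\partial\hat U\subset\mathrm{Max}(V)$ and their would-be counterparts in $N\setminus j(\hat U)$ are the dangerous pairs, and ruling out non-separation there requires a real argument (in the cited paper this is the ``enveloping space'' construction, which occupies most of the work). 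As written, your proof stops at the statement of the difficulty.

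Your proposed bypass \textemdash\ identifying $\hat U$ via the developing map with a regular domain in the sense of Bonsante \textemdash\ does not work at this level of generality. Regular domains are very specific convex subsets of Minkowski spacetime; an arbitrary causally convex open subset of an arbitrary GHM conformally flat spacetime need not develop injectively, let alone onto a regular domain. (Indeed, the whole point of the present paper is to isolate the special subsets \textemdash\ IPs and IFs \textemdash\ for which this \emph{does} happen, and the argument there \emph{uses} Fact~\ref{fact intro} rather than the other way around.)
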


In fact, the proof of Theorem \ref{thm intro: maximality} shows more precisely that the IFs (resp. IPs) of $M$ are conformally equivalent to past (resp. future) regular domains of Minkowski spacetime~$\M^n$ (see Definition \ref{def: regular domains}). These are past-complete (resp. future-complete) \emph{convex} open subsets of $\M^n$. We prove that the domains corresponding to \emph{the futures (resp. pasts) of points} are \emph{strictly} convex in the following sense:

\begin{theorem} \label{thm intro: stricly convex}
The chronological future (resp. past) of a point $p$ in $M$ is conformally equivalent to a past-regular (resp. future-regular) domain of Minkowski spacetime which does not contain any spacelike segment in its boundary.
\end{theorem}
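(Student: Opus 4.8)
The plan is to combine Theorem \ref{thm intro: maximality} — which already identifies the chronological future $I^+(p)$ of a point $p$ in $M$ with a past-regular domain $\Omega$ of Minkowski spacetime $\M^n$ — with a direct geometric analysis of the boundary of $\Omega$, in order to rule out spacelike segments. Recall that a past-regular domain is, by Definition \ref{def: regular domains}, the intersection of the pasts of a family of null hyperplanes (equivalently, a past-complete convex open set whose support function is carried by the lightcone); a spacelike segment in $\partial\Omega$ would force $\Omega$, near that segment, to look like the past of a \emph{spacelike} hyperplane, which is incompatible with being cut out purely by null half-spaces unless that spacelike face is a ``trivial'' face. So the real content is to show that the face of $\Omega$ coming from the vertex $p$ collapses to a single point rather than to a segment. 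The key idea is that $I^+(p)$ is not merely globally hyperbolic but has, in an appropriate sense, a \emph{single} past endpoint: every past-inextensible causal curve in $I^+(p)$ ``converges to $p$'', whereas a spacelike segment $[a,b]\subset\partial\Omega$ would produce a one-parameter family of distinct past ideal endpoints, i.e.\ a nondegenerate piece of the past causal boundary of $I^+(p)$ that is spacelike.

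First I would set up the correspondence carefully: by Lemma \ref{lemma intro: diamonds} and the discussion in Section \ref{sec intro: future of points}, $I^+(p) = \bigcup_q \ins J(p,q)$ and the developing map $D$ restricts to a conformal diffeomorphism from $I^+(p)$ onto $D(I^+(p)) = \bigcup_q \ins J(D(p), D(q)) \subset \eeu$. Choosing an affine chart of $\eeu$ centered so that $D(p)$ lies on the past null cone at infinity (i.e.\ realizing a neighborhood of $D(p)$ as the future cone of a point in $\M^n$, or pushing $D(p)$ to past null infinity), $D(I^+(p))$ becomes exactly the future cone $I^+_{\M}(D(p))$ of a point in $\M^n$ — this is the model case where $\Omega$ is a round cone, whose boundary is a null cone with \emph{no} spacelike segment and a single vertex. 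The maximal extension of $I^+(p)$ is then obtained by Fact \ref{fact intro} as a causally convex open subset of the maximal extension $M^{\max}$; the content of Theorem \ref{thm intro: maximality} is that this maximal extension is a past-regular domain $\Omega$, and that $I^+(p)\hookrightarrow\Omega$ is a \emph{Cauchy} embedding.

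Next I would argue by contradiction: suppose $[a,b]\subset\partial\Omega$ is a nondegenerate spacelike segment. The past-regular domain structure means $\Omega = \bigcap_{\xi\in\Lambda} I^-_{\M}(H_\xi)$ for null hyperplanes $H_\xi$; a point $x$ in the relative interior of $[a,b]$ lies on the boundary, so there is a null support hyperplane $H$ through $x$, and the whole segment $[a,b]$, being spacelike and contained in $\overline\Omega$, must lie in the closed null half-space bounded by $H$ — forcing $[a,b]$ to lie \emph{inside} $H$ itself (a spacelike segment in a boundary null hyperplane). Now I would translate this back to a causal statement in $\Omega$: a null boundary hyperplane supporting a spacelike segment produces, in $\Omega$, a null geodesic ray whose past limit is that entire spacelike segment, i.e.\ the initial singularity / past causal boundary of $\Omega$ contains a spacelike edge of positive length. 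But $I^+(p)$ shares a Cauchy hypersurface with $\Omega$, hence has the same past causal boundary (the causal boundary, and in particular its ``past part'', is a Cauchy-extension invariant — this is where I would invoke global hyperbolicity together with Fact \ref{fact intro}), and the past causal boundary of $I^+(p)$ is a single point, namely the TIP/TIF-theoretic class of $p$ — a contradiction.

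The main obstacle I anticipate is the last step: making rigorous the claim that the past causal boundary of $I^+(p)$ is reduced to a point and that it is invariant under Cauchy-extension, in a purely conformal setting where one does not have a metric to measure ``length'' of the initial singularity. The clean way around this is to phrase everything in terms of \emph{TIPs}: a spacelike segment in $\partial\Omega$ corresponds to a continuum of distinct TIPs of $\Omega$ (distinct indecomposable past sets, each the past of a point tending to a different point of $[a,b]$), whereas every past-inextensible causal curve in $I^+(p)$ has chronological past equal to all of $I^+(p)$ (since $I^+(p)$ is itself the future of the single point $p$, any two points of it have a common past point near $p$), so $I^+(p)$ has a \emph{unique} TIP. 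Since TIPs are manifestly determined by the conformal (causal) structure and $I^+(p)$ and $\Omega$ are conformally equivalent as spacetimes, they have the same set of TIPs, forcing the segment to degenerate. I would also need the routine but necessary verification, alluded to in the excerpt, that a convex open set in $\M^n$ cut out by null half-spaces and containing no spacelike boundary segment is exactly the ``strictly convex'' condition in the intended sense, and that in the symmetric (past-of-a-point) case the same argument applies verbatim with time reversed.
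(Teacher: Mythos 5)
Your reduction of the statement to ``the boundary of the past-regular domain $\Omega\cong I^+(p)$ has no spacelike face'' is the right starting point, but the step you rely on for the contradiction is false. You claim that $I^+(p)$, as a spacetime in its own right, has a \emph{unique} element in its past causal boundary, on the grounds that any two of its points admit a common past point near $p$. That latter fact only shows that $I^+(p)$ is a single \emph{indecomposable future set of the ambient spacetime} $M$; it says nothing about the causal boundary of the spacetime $I^+(p)$ itself. Already in the model case where $\Omega$ is the open future cone $C=I^+(0)$ in $\M^n$, the past causal boundary of $C$ is a continuum: for every $q$ on the null cone $\partial C\setminus\{0\}$, a past-directed timelike curve in $C$ converging to $q$ is past-inextensible \emph{in $C$}, and its chronological future in $C$ is $I^+(q)\cap C=I^+(q)\subsetneq C$, so each such $q$ yields a distinct TIF of $C$. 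Consequently, exhibiting a continuum of distinct TIFs attached to a spacelike segment of $\partial\Omega$ contradicts nothing, and the argument collapses. An attempt to repair it by distinguishing ``spacelike'' families of TIFs from ``null'' ones runs into exactly the difficulty you flagged (the causal boundary carries no metric), and nestedness of TIFs does not separate the two situations either, since points on distinct null generators of $\partial C$ already give non-nested TIFs. A secondary inaccuracy: in general $D(I^+(p))$ is a \emph{proper} past-regular subdomain of the affine chart $\Mink_+(D(p))$, not the full future cone of a point of $\M^n$, so no choice of chart realizes the ``round cone'' normal form outside the complete case.

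For contrast, the paper's proof of Property \ref{prop: no spacelike segment for the PIPs} avoids causal boundaries entirely and argues by convexity in two affine charts: if $\sigma$ is a spacelike segment in the boundary of $U=D(I^-(p))$ inside $\Mink_-(D(p))$, one passes to the chart $\Mink_-(D(q))$ for some $q\in I^+(p)$, where $D(I^-(q))$ is again a convex regular domain with $\sigma$ in its boundary; the spacelike line carrying $\sigma$ closes up in $\eeu$ to a conformal circle through the conjugate point $\sigma^{-1}(D(p))$, whose trace in the new chart is a branch of a hyperbola (Lemma \ref{lemma: hyperboloid in an affine chart}), and convexity forces that circle to meet the lightcone of $D(p)$ in two distinct points, which is impossible. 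Some argument of this direct, convex-geometric kind is needed in place of your causal-boundary count.
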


This result does not hold for the TIPs/TIFs.

\subsection*{Overview of the paper} 

Section \ref{sec: conformal compactification of Minkowski spacetime} presents Einstein universe as the conformal compactification of Minkowski spacetime. In particular, we provide a detailed description of the conformal boundary of Minkowski spacetime, known as \emph{the Penrose boundary}. In Section \ref{sec: regular domains}, we introduce the notion of \emph{regular domains} and we characterize them in Einstein universe using the notion of \emph{shadows} developed by C. Rossi. Section \ref{sec: conformally flat spacetimes} reviews some well-know facts about globally hyperbolic conformally flat spacetimes and their causal boundaries. In Section \ref{sec: canonical neighborhoods}, we prove that the diamonds are domains of injectivity for the developing map and we establish Theorem \ref{thm intro: conjugate points}. We also prove that the IPs/IFs are domains of injectivity for the developing map. Last but not least, we establish the maximality of the IPs/IFs and we prove Theorem \ref{thm intro: stricly convex} in Section \ref{sec: maximality of IPs}. 

\subsection*{Acknowledgement}

I would like to express my gratitude to my PhD advisor, Thierry Barbot, for his time, guidance, thoughtful insights, and the many engaging discussions that made this work possible. I am also thankful to Adam Chalumeau for pointing out that the intersection of two diamonds in the Einstein universe might not always be connected and for the stimulating conversations on the geometry of these diamonds, which ultimately shaped Sections~\ref{sec: diamonds in the Einstein universe} and \ref{sec: intersection of diamonds}.

\section{Conformal compactification of Minkowski spacetime} \label{sec: conformal compactification of Minkowski spacetime}

We assume the reader sufficiently acquainted with causality of spacetimes, namely the notions of \emph{causal curves}, \emph{future and past of a subset}, \emph{diamond}, \emph{lightcones}, \emph{achronal and acausal subsets}, \emph{global hyperbolicity}, etc. We direct to \cite[Chap. 14]{oneill} for further details.

Throughout this document, given two subsets $A$ and $U$ of a spacetime $M$ such that $A \subset U$, we denote by $J^\pm(A,U)$ (resp. $I^\pm(A,U)$) the causal (resp. chronological) future/past of $A$ relatively to $U$. When $U = M$, we simply write $J^\pm(A)$ (resp. $I^\pm(A)$). A diamond $J^+(q) \cap J^-(p)$ will be denoted $J(p,q)$. Its interior, equal to $I^+(q) \cap I^-(p)$, will be denoted $I(p,q)$.\\

This section presents the conformal compactification of the flat Lorentzian model \emph{Minkowski spacetime}. In Riemannian geometry, the conformal compactification of the Euclidean space $\mathbb{E}^n$ is the conformal sphere $\mathbb{S}^n$. By analogy, the conformal compactification of Minkowski spacetime $\M^n$ is the Lorentzian analogue of the conformal sphere, the so-called \emph{Einstein universe} $Ein_{1,n-1}$.

\subsection{Einstein universe} \label{sec: Einstein universe}

In this section, we recall the geometry and the causal structure of Einstein universe.

\subsubsection{Klein model} 

Let $\R^{2,n}$ be the vector space $\R^{n+2}$ of dimension $(n+2)$ equipped with the nondegenerate quadratic form $q_{2,n}$ of signature $(2,n)$ given by
\begin{align*}
q_{2,n}(u,v,x_1,\ldots, x_n) &= -u^2 - v^2 + x_1^2 + \ldots + x_n^2
\end{align*}
in the coordinate system $(u,v,x_1,\ldots,x_n)$ associated to the canonical basis of $\R^{n+2}$.

\begin{definition}
\emph{Einstein universe} of dimension $n$, denoted by $\mathsf{Ein}_{1,n-1}$, is the space of isotropic lines of $\R^{2,n}$ with respect to the quadratic form $q_{2,n}$, namely
\begin{align*}
\mathsf{Ein}_{1,n-1} &= \{[x] \in \mathbb{P}(\R^{2,n}):\ q_{2,n}(x) = 0\}.
\end{align*}
\end{definition}

In practice, it is more convenient to work with the double cover of the Einstein universe, denoted by $Ein_{1,n-1}$:
\begin{align*}
Ein_{1,n-1} &= \{[x] \in \mathbb{S}(\R^{2,n}):\ q_{2,n}(x) = 0\}
\end{align*}
where $\mathbb{S}(\R^{2,n})$ is the sphere of rays, namely the quotient of $\R^{2,n} \backslash \{0\}$ by positive homotheties.

\subsubsection{Spatio-temporal decomposition} 

A plane $P \subset \R^{2,n}$ is said to be \emph{timelike} if the restriction of $q_{2,n}$ to $P$ is negative definite.

\begin{lemma}
The choice of a timelike plane $P \subset \R^{2,n}$ defines a diffeomorphism from $\mathbb{S}^{n-1} \times \mathbb{S}^1$ to $\eeu$.
\end{lemma}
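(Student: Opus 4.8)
The plan is to produce the diffeomorphism explicitly from a $q_{2,n}$-orthogonal splitting $\R^{2,n} = P \oplus P^\perp$, where $P$ is the chosen timelike plane. Since $q_{2,n}$ is nondegenerate and $q_{2,n}|_P$ is negative definite of signature $(2,0)$, the restriction $q_{2,n}|_{P^\perp}$ is positive definite of signature $(0,n)$. First I would fix linear isomorphisms $P \cong \R^2$ and $P^\perp \cong \R^n$ under which $-q_{2,n}|_P$ becomes the standard Euclidean form on $\R^2$ and $q_{2,n}|_{P^\perp}$ becomes the standard Euclidean form on $\R^n$. Writing a vector of $\R^{2,n}$ as $(a,b)$ with $a \in P$, $b \in P^\perp$, the isotropy condition $q_{2,n}(a,b) = 0$ reads $\lVert b \rVert^2 = \lVert a \rVert^2$ (Euclidean norms on the respective factors), and a nonzero isotropic vector has $a \neq 0$ and $b \neq 0$.

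Next I would pass to the sphere of rays. A nonzero isotropic vector $(a,b)$ represents the same point of $Ein_{1,n-1}$ as any $(\lambda a, \lambda b)$ with $\lambda > 0$; normalizing by $\lambda = 1/\lVert a \rVert = 1/\lVert b \rVert$ puts it in the form $(a', b')$ with $\lVert a' \rVert = \lVert b' \rVert = 1$, i.e. $a' \in \mathbb{S}^1 \subset P$ and $b' \in \mathbb{S}^{n-1} \subset P^\perp$. This normal form is unique, so the assignment $[x] \mapsto (a'/\lVert a'\rVert, b'/\lVert b'\rVert)$ is a well-defined map $Ein_{1,n-1} \to \mathbb{S}^1 \times \mathbb{S}^{n-1}$, with inverse $(\alpha,\beta) \mapsto [(\alpha,\beta)]$. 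I would then check both maps are smooth: the inverse is the restriction of a linear map followed by the (smooth) projection to the sphere of rays, and the forward map is smooth because $\lVert a \rVert$ and $\lVert b\rVert$ are smooth and nonvanishing on the locus of nonzero isotropic vectors (one works in the chart of $\mathbb{S}(\R^{2,n})$ given by a local section of the ray-projection and composes with the explicit formulas). Hence the map is a diffeomorphism $\eeu \cong \mathbb{S}^1 \times \mathbb{S}^{n-1}$. Note the statement as written in the excerpt uses $\mathbb{S}^{n-1} \times \mathbb{S}^1$ and writes $\eeu$ for this factor, which (given the macro $\eu = \widetilde{Ein}_{1,n}$) I read as a typo for $Ein_{1,n-1}$; I would phrase the proof for $Ein_{1,n-1}$ and remark that composing with the flip $\mathbb{S}^1 \times \mathbb{S}^{n-1} \to \mathbb{S}^{n-1} \times \mathbb{S}^1$ gives the stated form.

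I do not expect a serious obstacle here; the only mild care needed is the smoothness bookkeeping on the sphere of rays $\mathbb{S}(\R^{2,n}) = (\R^{2,n} \setminus \{0\})/\R_{>0}$, which is most cleanly handled by noting that the cone of nonzero isotropic vectors is a smooth submanifold of $\R^{2,n}\setminus\{0\}$ on which $\R_{>0}$ acts freely and properly, that the map $(a,b) \mapsto (a/\lVert a\rVert, b/\lVert b \rVert)$ on this cone is smooth and $\R_{>0}$-invariant, and that it therefore descends to a smooth map on the quotient, with the smooth inverse exhibited above. The one genuinely content-bearing point — that the splitting has the claimed signatures so that $a \neq 0 \neq b$ on the isotropic cone — is immediate from nondegeneracy of $q_{2,n}$ and Sylvester's law of inertia.
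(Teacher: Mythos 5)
Your proof is correct and follows essentially the same route as the paper's: an orthogonal splitting $\R^{2,n} = P^\perp \oplus P$, the observation that $\pm q_{2,n}$ is positive definite on each factor, and the identification of the isotropic rays with the product of unit spheres via normalization (the paper phrases it as restricting the canonical projection to the product of unit spheres; you exhibit the inverse normalization map and check smoothness, which is the same diffeomorphism read in the other direction). You are also right that the target in the statement should be $Ein_{1,n-1}$ rather than $\eeu$, as the paper's own proof confirms.
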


\begin{proof}
Indeed, consider the orthogonal splitting $\R^{2,n} = P^\perp \oplus P$ and call $q_{P^\perp}$ and $q_{P}$ the positive definite quadratic form induced by $\pm q_{2,n}$ on $P^\perp$ and $P$ respectively. The restriction of the canonical projection $\R^{2,n} \backslash \{0\}$ on $\mathbb{S}(\R^{2,n})$ to the set of points $(x,y) \in P^\perp \oplus P$ such that $q_{P^\perp}(x) = q_P(y) = 1$ defines a smooth map from $\mathbb{S}^{n-1} \times \mathbb{S}^1$ to $Ein_{1,n-1}$. It is easy to see that this is a diffeomorphism.
\end{proof}

For every timelike plane $P \subset \R^{2,n}$, the quadratic form $q_{2,n}$ induces a Lorentzian metric $g_P$ on $\mathbb{S}^{n-1} \times \mathbb{S}^1$ given by
\begin{align*}
g_P &= d\sigma^2 (P) - d\theta^2(P)
\end{align*}
where $d\sigma^2(P)$ is the round metric on $\mathbb{S}^{n-1} \subset (P^\perp,q_{P^\perp})$ induced by $q_{P^\perp}$  and $d\theta^2(P)$ is the round metric on $\mathbb{S}^{1} \subset (P,q_P)$ induced by $q_P$.

An easy computation shows that if $P' \subset \R^{2,n}$ is another timelike plane, the Lorentzian metric $g_{P'}$ is conformally equivalent to $g_P$, i.e. $q_P$ and $g_P'$ are proportionnal by a positive smooth function on $\mathbb{S}^{n-1} \times \mathbb{S}^1$. As a result, Einstein universe is a conformal spacetime.

\begin{definition}
We call \emph{spatio-temporal decomposition} of $Ein_{1,n-1}$ any conformal diffeomorphism from $\mathbb{S}^{n-1} \times \mathbb{S}^1$ to $Ein_{1,n-1}$.
\end{definition}

The causal structure of Einstein universe is trivial: any point is causally related to any other one (see e.g. \cite[Cor. 2.10, Chap. 2]{salveminithesis}).

\subsubsection{Lightlike geodesics, lightcones and conformal spheres}

We characterize some remarkable geometrical objects in Einstein universe:

\begin{enumerate}
\item \emph{A photon} of $Ein_{1,n-1}$ is the intersection of $Ein_{1,n-1}$ with the projectivization of a totally isotropic plane of $\R^{2,n}$ (see e.g. \cite[Chap.  2, Lemme 2.12]{salveminithesis}).
\item \emph{A lightcone} of a point $\mathrm{x} = [x] \in Ein_{1,n-1}$ is the intersection of $Ein_{1,n-1}$ with the projectivization of the orthogonal of $x$ with respect to $q_{2,n}$. Topologically, it is a double pinched torus. Remark that in $\mathsf{Ein}_{1,n-1}$, the lightcone of a point is a pinched torus.
\item \emph{A conformal $(k-1)$-sphere} of $Ein_{1,n-1}$ is a connected component of the intersection of $Ein_{1,n-1}$ with the projectivization of a Lorentzian $(k+1)$-plane of $\R^{2,n}$.
\end{enumerate}

\begin{figure}[h!]
\centering
\begin{tabular}{ccccc}
\includegraphics[scale=1.4]{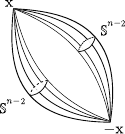} &  &  & &
\includegraphics[scale=1.5]{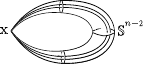}
\end{tabular}
\caption{Lightcone of a point $\mathrm{x}$ in $Ein_{1,n-1}$ (on the left) and in $\mathsf{Ein}_{1,n-1}$ (on the right).}
\end{figure}

\subsubsection{Universal Einstein universe} \label{sec: universal Einstein}

Let $\pi: \eeu \to Ein_{1,n-1}$ be the universal cover of Einstein universe. 

When $n \geq 3$, every diffeomorphism from $Ein_{1,n-1}$ and $\mathbb{S}^{n-1} \times \mathbb{S}^1$ lifts to a diffeomorphism from $\eeu$ to $\mathbb{S}^{n-1} \times \R$. The pull-back by the projection $\mathbb{S}^{n-1} \times \R \to \mathbb{S}^{n-1} \times \mathbb{S}^1$ of the conformal class of the Lorentzian metric $d\sigma^2 - d\theta^2$ on $\mathbb{S}^{n-1} \times \mathbb{S}^1$ defined previously, is the conformal class of the Lorentzian metric $d\sigma^2 - dt^2$ where $dt^2$ is the usual metric on~$\R$. This induces a natural conformal structure on $\eeu$. 

\begin{definition}
We call \emph{spatio-temporal decomposition of $\eeu$} any conformal diffeomorphism between $\eeu$ and $\mathbb{S}^{n-1} \times \R$. 
\end{definition}

In what follows, we fix a spatio-temporal decomposition and we identify $\eeu$ to $\mathbb{S}^{n-1} \times \R$. The fundamental group of $Ein_{1,n-1}$ is isomorphic to $\mathbb{Z}$, generated by the transformation $\delta: \eeu \to \eeu$ defined by $\delta(x,t) = (x, t + 2\pi)$. The fundamental group of $\mathsf{Ein}_{1,n-1}$ is generated by the transformation $\sigma: \eeu \to \eeu$ such that $\sigma^2 = \delta$, i.e. the map defined by $\sigma(x,t) = (-x, t + \pi)$.

\begin{definition}
Two points $p$ and $q$ of $\eeu$ are said to be \emph{conjugate} if one is the image under $\sigma$ of the other.
\end{definition}

\begin{remark}
If $p, q \in \eeu$ are conjugate, then $\pi(p) = - \pi(q)$.
\end{remark}

Unlike $Ein_{1,n-1}$, the causal structure of $\eeu$ is far from being trivial. We \mbox{describe} it briefly below. We direct to \cite[Chap. 2]{salveminithesis} for more details.\\

Lightlike geodesics of $\eeu$ are the curves which can be written, up to reparametrization, as $(x(t), t)$ where $x: I \to \mathbb{S}^{n-1}$ is a geodesic of $\mathbb{S}^{n-1}$ defined on an interval $I$ of $\R$. The inextensible ones are those for which $x$ is defined on $\R$. 

The photons going through a point $(x_0, t_0)$ have common intersections at the points $\sigma^k(x_0,t_0)$, for $k \in \mathbb{Z}$; and are pairwise disjoint outside these points. The following definition introduces the concept of \emph{complete} lightlike geodesics, which will be useful for the discussion ahead in Section \ref{sec: diamonds}.

\begin{definition}
A lightlike geodesic of $\eeu$ is said to be \emph{complete} if it connects two conjugate points \emph{strictly}.
\end{definition}

The lightcone of a point $(x_0, t_0)$ is the set of points $(x,t)$ such that $d(x,x_0) = |t - t_0|$ where $d$ is the distance on the sphere $\mathbb{S}^{n-1}$ induced by the round metric. It disconnects $\eeu$ in three connected components:
\begin{itemize}
\item The \emph{chronological future} of $(x_0, t_0)$: this is the set of points $(x,t)$ of $\mathbb{S}^{n-1} \times \R$ such that $d(x,x_0) < t - t_0$.
\item The \emph{chronological past} of $(x_0, t_0)$: this is the set of points $(x,t)$ of $\mathbb{S}^{n-1} \times \R$ such that $d(x,x_0) < t_0 - t$.
\item The set of points non-causally related to $(x_0, t_0)$, i.e. the set of points $(x,t)$ of $\mathbb{S}^{n-1} \times \R$ such that $d(x,x_0) > |t - t_0|$. 
\end{itemize}

The universal cover $\eeu$ is globally hyperbolic: any sphere $\mathbb{S}^{n-1} \times \{t\}$, where $t \in \R$, is a Cauchy hypersurface.

\subsubsection{Conformal group}

The subgroup $O(2,n) \subset Gl_{n+2}(\R)$ preserving $q_{2,n}$, acts conformally on $Ein_{1,n-1}$. When $n \geq 3$, the conformal group of $Ein_{1,n-1}$ is \emph{exactly} $O(2,n)$. This is a consequence of the following result, which is an extension to Einstein universe, of a classical theorem of Liouville in Euclidean conformal geometry (see e.g. \cite{francesarticle}):

\begin{theorem}\label{Liouville theorem}
Let $n \geq 3$. Any conformal transformation between two open subsets of $Ein_{1,n-1}$ is the restriction of an element of $O(2,n)$.
\end{theorem}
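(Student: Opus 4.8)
The plan is to reduce the statement to the classical Liouville theorem on a flat conformal model, namely Minkowski spacetime $\M^{n}$ (or equivalently the flat conformal sphere in the Riemannian case), exploiting the fact that $Ein_{1,n-1}$ is the conformal compactification of $\M^{n}$ and that, locally, any chart of $Ein_{1,n-1}$ looks like an open subset of $\M^{n}$ equipped with its flat conformal structure. So let $f\colon U \to V$ be a conformal transformation between two connected open subsets of $Ein_{1,n-1}$, with $n\geq 3$. First I would cover $U$ by conformal coordinate patches: since $Ein_{1,n-1}$ is $O(2,n)$-homogeneous and a Minkowski patch (the complement of a lightcone) is dense, around any point of $U$ there is an open set conformally identified, via an element of $O(2,n)$, with an open subset of $\M^{n}$ carrying its standard flat conformal class. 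Doing the same on the target side, $f$ becomes, in these coordinates, a conformal map between open subsets of $\M^{n}$.

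The second step is to invoke the \emph{classical} Liouville theorem in dimension $n\geq 3$: every conformal map between connected open subsets of flat $\M^{n}$ is the restriction of a conformal transformation of the form generated by isometries, homotheties, and inversions — in other words, the restriction of an element of the Möbius group of $\M^{n}$, which is precisely (a copy of) $O(2,n)$ acting on $Ein_{1,n-1}\supset\M^{n}$. (The hypothesis $n\geq 3$ is essential here, since in dimension $2$ the conformal group is infinite-dimensional.) Thus on each patch $f$ agrees with the restriction of some $g_{\alpha}\in O(2,n)$. The third step is to glue: on the overlap of two patches, $g_{\alpha}$ and $g_{\beta}$ are two elements of $O(2,n)$ agreeing on a nonempty open subset of $Ein_{1,n-1}$; since $Ein_{1,n-1}$ spans $\mathbb{P}(\R^{2,n})$ (it is not contained in any hyperplane) and an element of $PO(2,n)$ is determined by its action on a Zariski-dense subset, we get $g_{\alpha}=g_{\beta}$ in $PO(2,n)$, hence a single element $g$ with $f=g|_{U}$; passing to the double cover $Ein_{1,n-1}$ and keeping track of the two lifts of each Möbius transformation, $g$ is realized by an honest element of $O(2,n)$.

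The main obstacle is bookkeeping rather than conceptual: one must be careful that the charts identifying pieces of $Ein_{1,n-1}$ with $\M^{n}$ respect the conformal class on the nose (the metric $g_P$ from the spatio-temporal decomposition is only defined up to conformal factor, so one should fix, in each chart, the flat representative coming from the inverse stereographic-type projection), and one must handle the passage between $Ein_{1,n-1}$, its quotient $\mathsf{Ein}_{1,n-1}=\mathbb{P}(\ldots)$, and the group $O(2,n)$ versus $PO(2,n)$ correctly — the $\pm$ ambiguity of a Möbius transformation lifts to the deck transformation $-\mathrm{Id}\in O(2,n)$, so no information is lost. A clean alternative that avoids some of this, if one prefers, is to prove it directly on $\M^{n}$ first (the version actually needed elsewhere in the paper) and then transport along the conformal embedding $\M^{n}\hookrightarrow Ein_{1,n-1}$, using density of the Minkowski patch to extend uniqueness to all of $Ein_{1,n-1}$; I would likely present the argument in this order since Section \ref{sec: conformal compactification of Minkowski spacetime} already sets up that embedding.
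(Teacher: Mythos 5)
Your argument is essentially correct, but note that the paper itself does not prove this statement: it is quoted as a known extension of Liouville's theorem with a pointer to the literature (\cite{francesarticle}), so there is no internal proof to compare against. Your reduction --- cover $U$ by affine charts, invoke the classical flat pseudo-Riemannian Liouville theorem on each patch, then glue the local M\"obius representatives using the fact that an element of $PO(2,n)$ is determined by its restriction to a nonempty open subset of the quadric, and finally resolve the $\pm\mathrm{Id}$ ambiguity on the double cover --- is the standard way to deduce the Einstein-universe version from the flat one, and each step is sound. Two points deserve explicit care if you write this up. First, the statement as phrased in the paper is only literally true for \emph{connected} open subsets (on a disconnected $U$ one could use different group elements on different components); you correctly add connectedness, and it is needed in the gluing step. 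Second, all of the analytic content is outsourced to the flat Liouville theorem in signature $(1,n-1)$ for $n\geq 3$; that result is genuinely nontrivial (its proof goes through the rigidity of conformal maps between flat pseudo-Riemannian metrics, e.g.\ via the Schouten tensor or conformal Killing fields, and requires regularity hypotheses you should state), so your argument is a legitimate reduction rather than a self-contained proof --- which matches the level at which the paper itself treats the theorem.
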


It is a classical fact that every conformal diffeomorphism of $Ein_{1,n-1}$ lifts to a conformal diffeomorphism of $\eeu$. Conversely, by Theorem \ref{Liouville theorem}, every conformal transformation of $\eeu$ defines a unique conformal transformation of the quotient space $Ein_{1,n-1} = \eeu / <\delta>$.

Let $\Conf(\eeu)$ denote the group of conformal transformations of $\eeu$. Let $j: \Conf(\eeu) \longrightarrow O(2,n)$ be the natural projection. This is a surjective group morphism whose kernel is generated by $\delta$. 

\subsection{Affine charts} \label{sec: affine charts}

In this section, we show that Einstein universe is the compactification of Minkowski spacetime. In the Riemannian setting, the Euclidean space is conformally equivalent to the complement of a point in the conformal sphere. In the Lorentzian setting, we should take into account causality which is latent in the Riemannian setting. Therefore, it turns out that Minkowski spacetime is conformally equivalent to the complement of \emph{a lightcone} in Einstein universe. Indeed, let $\mathrm{x} \in \mathsf{Ein}_{1,n-1}$. We denote by $M(\mathrm{x})$ the complement of the lightcone of $\mathrm{x}$ in $\mathsf{Ein}_{1,n-1}$:
\begin{align*}
M(\mathrm{x}) &= \{\mathrm{y} \in \mathsf{Ein}_{1,n-1}:\ <x,y>_{2,n} \not = 0\ \mathrm{with}\ \mathrm{x} = [x],\  \mathrm{y} = [y]\}.
\end{align*}

For every representant $x \in \R^{2,n}$ of $\mathrm{x}$, we call $f_x: M(\mathrm{x}) \times M(\mathrm{x}) \to x^\perp/\vect(x)$ the map defined by $f(\mathrm{y}, \mathrm{z}) = [y - z]$ where $y$ and $z$ are representant of $\mathrm{y}$ and $\mathrm{z}$ respectively such that $<y,x>_{2,n} = <z,x>_{2,n} = -1/2$.

\begin{lemma}\label{lemma: affine chart}
The map $f_x$ defines an affine structure on $M(\mathrm{x})$ of direction $x^\perp/\vect(x)$. Moreover, if $x, x' \in \R^{2,n}$ are two distinct representant of $\mathrm{x}$, there is $\lambda \in \R^*$ such that $f_{x'} = \lambda f_x$. \qed
\end{lemma}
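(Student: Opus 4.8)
The plan is to check directly that $f_x$ satisfies the axioms of an affine space structure on $M(\mathrm{x})$ modeled on the vector space $x^\perp/\vect(x)$, and then to track how the construction depends on the choice of representative $x$. First I would verify that the map is well defined: given $\mathrm{y} \in M(\mathrm{x})$, the condition $\langle y, x\rangle_{2,n} = -1/2$ singles out a unique representative $y$ of $\mathrm{y}$ among the positive-ray representatives, because $\langle y, x\rangle_{2,n} \neq 0$ precisely says $\mathrm{y} \notin$ lightcone of $\mathrm{x}$, and scaling $y \mapsto \lambda y$ scales the pairing by $\lambda$; so normalization is possible and unique. Then for two such normalized representatives $y, z$ one has $\langle y - z, x\rangle_{2,n} = -1/2 - (-1/2) = 0$, i.e.\ $y - z \in x^\perp$, so $[y-z] := y - z + \vect(x)$ is a legitimate element of $x^\perp/\vect(x)$. (Here I would note $x \in x^\perp$ since $q_{2,n}(x) = 0$, so the quotient makes sense and has dimension $n$.)

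Next I would establish the two affine-space axioms. The Chasles relation $f_x(\mathrm{y},\mathrm{z}) + f_x(\mathrm{z},\mathrm{w}) = f_x(\mathrm{y},\mathrm{w})$ is immediate from $(y - z) + (z - w) = y - w$ once all three representatives are normalized. For the simple-transitivity axiom, I would fix a basepoint $\mathrm{z}_0 \in M(\mathrm{x})$ with normalized representative $z_0$ and show that $\mathrm{y} \mapsto f_x(\mathrm{y},\mathrm{z}_0)$ is a bijection from $M(\mathrm{x})$ onto $x^\perp/\vect(x)$: given a class $[v] \in x^\perp/\vect(x)$, I must produce the unique $\mathrm{y}$ with normalized representative $y = z_0 + v + t x$ for a suitable scalar $t$. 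One needs $q_{2,n}(y) = 0$ and $\langle y, x\rangle_{2,n} = -1/2$; the latter is automatic because $v \in x^\perp$ and $q_{2,n}(x)=0$ force $\langle z_0 + v + tx, x\rangle = \langle z_0, x\rangle = -1/2$ for every $t$; the former, $q_{2,n}(z_0 + v + tx) = 0$, expands — using $q_{2,n}(x) = 0$ and $\langle x, x\rangle = 0$ — to a relation \emph{linear} in $t$ with $t$-coefficient $2\langle z_0 + v, x\rangle = 2\langle z_0, x\rangle = -1$, hence nonzero, so there is exactly one solution $t$. This gives both existence and uniqueness of $\mathrm{y}$, establishing that $f_x$ is an affine chart, i.e.\ $M(\mathrm{x})$ is an affine space of direction $x^\perp/\vect(x)$ with vector map $f_x$.

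Finally, for the dependence on the representative: if $x' = \mu x$ with $\mu \in \R^*$, then the normalization condition for $x'$ reads $\langle y', x'\rangle_{2,n} = -1/2$, i.e.\ $\langle y', x\rangle_{2,n} = -1/(2\mu)$, so the $x'$-normalized representative of $\mathrm{y}$ is $\tfrac{1}{\mu}$ times the $x$-normalized one; subtracting, $f_{x'}(\mathrm{y},\mathrm{z}) = \tfrac{1}{\mu}\,f_x(\mathrm{y},\mathrm{z})$ as elements of $x^\perp/\vect(x) = (x')^\perp/\vect(x')$, which is the claimed relation with $\lambda = 1/\mu$. (For representatives in the sphere of rays both representatives are positive multiples, so in fact $\mu > 0$ and $\lambda > 0$; the statement as phrased only asserts $\lambda \in \R^*$.) I expect no serious obstacle here — the only point requiring a little care is the simple-transitivity step, where one must be sure the quadratic equation in $t$ genuinely degenerates to a nondegenerate linear one, which is exactly where the hypothesis $\mathrm{z}_0 \notin$ lightcone of $\mathrm{x}$ (equivalently $\langle z_0, x\rangle \neq 0$) is used.
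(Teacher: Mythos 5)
Your proof is correct, and it takes the only natural route: the paper states this lemma with a \qed and omits the verification entirely, so your direct check of well-definedness, the Chasles relation, simple transitivity (via the observation that the quadric condition $q_{2,n}(z_0+v+tx)=0$ degenerates to a linear equation in $t$ with nonzero coefficient $2\langle z_0,x\rangle_{2,n}=-1$), and the scaling $f_{\mu x}=\mu^{-1}f_x$ is exactly the routine computation the paper is implicitly relying on. No gaps.
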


The orthogonal of $x$ is degenerate. The kernel is the vector line in the direction of~$x$. A supplement of $\vect(x)$ in $x^\perp$ is a subspace such that the restriction of $q_{2,n}$ is of signature $(1,n-1)$. Therefore, $q_{2,n}$ induces a quadratic form of signature $(1,n-1)$ on the quotient $x^\perp/\vect(x)$. We get the following statement:

\begin{proposition}\label{prop: affine chart}
The open domain $M(\mathrm{x})$ is a conformal Minkowski spacetime. \qed
\end{proposition}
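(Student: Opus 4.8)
The plan is to upgrade the affine structure furnished by Lemma~\ref{lemma: affine chart} to a flat Lorentzian structure, and then to check that the associated conformal class coincides with the one $M(\mathrm{x})$ inherits as an open subset of $\mathsf{Ein}_{1,n-1}$. Fix a representative $x \in \R^{2,n}$ of $\mathrm{x}$. As noted just before the statement, $x^\perp$ has dimension $n+1$, its radical is $\vect(x)$, and $q_{2,n}$ descends to a nondegenerate quadratic form $\bar q_x$ of signature $(1,n-1)$ on the $n$-dimensional quotient $x^\perp/\vect(x)$, so that $(x^\perp/\vect(x),\bar q_x)$ is a Minkowski vector space. Through the affine chart $\mathrm{y} \mapsto f_x(\mathrm{y},\mathrm{z}_0)$ attached to any fixed basepoint $\mathrm{z}_0$, the domain $M(\mathrm{x})$ then becomes an affine space directed by this Minkowski vector space, hence a flat Lorentzian manifold isometric to $\M^n$. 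The relation $f_{x'} = \lambda f_x$ of Lemma~\ref{lemma: affine chart} shows that rescaling $x$ only rescales $\bar q_x$ by a positive constant, so this flat metric is well defined up to a constant and determines a genuine conformal class on $M(\mathrm{x})$; what remains is to identify it with the Einstein conformal class.

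For this I would pass to the lift $\widehat{M}(\mathrm{x}) = \{\, y \in \R^{2,n} : q_{2,n}(y) = 0,\ \langle x, y\rangle_{2,n} = -1/2 \,\}$, a smooth section of the projection onto $\mathsf{Ein}_{1,n-1}$ over $M(\mathrm{x})$. Recall — this is already implicit in the construction of spatio-temporal decompositions in Section~\ref{sec: Einstein universe}, where the chosen diffeomorphism $\mathbb{S}^{n-1}\times\mathbb{S}^1 \to Ein_{1,n-1}$ is exactly such a null-cone section and $g_P$ is the restriction of $q_{2,n}$ to it — that the conformal structure of $\mathsf{Ein}_{1,n-1}$ is represented on any section of the null cone by the restriction of $q_{2,n}$ to the tangent spaces of that section. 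So I would compute $T_y \widehat M(\mathrm{x}) = y^\perp \cap x^\perp$ by differentiating the two defining equations, observe (using $\langle x,y\rangle_{2,n}\neq 0$, so that $x \notin y^\perp$ and $x^\perp = \vect(x) \oplus (y^\perp \cap x^\perp)$ by a dimension count) that the quotient map $\pi_x\colon x^\perp \to x^\perp/\vect(x)$ restricts to a linear isomorphism $T_y\widehat M(\mathrm{x}) \cong x^\perp/\vect(x)$, and note that this isomorphism is exactly the differential at $y$ of the (affine, hence everywhere-the-same) chart $y \mapsto \pi_x(y - z_0)$. Finally, for $w \in T_y\widehat M(\mathrm{x})$ one has $w \perp x$ and $q_{2,n}(x)=0$, hence $\bar q_x(\pi_x(w)) = q_{2,n}(w)$: the flat metric $\bar q_x$, pulled back through the affine chart, is literally the restriction of $q_{2,n}$ to $\widehat M(\mathrm{x})$, i.e.\ the Einstein conformal representative. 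Transporting down the section, the flat Lorentzian metric of the first paragraph lies in the conformal class $M(\mathrm{x})$ receives from $\mathsf{Ein}_{1,n-1}$, which proves Proposition~\ref{prop: affine chart}.

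I expect the only delicate point to be identifying the differential of the affine chart with $\pi_x$ restricted to $T_y\widehat M(\mathrm{x})$ — after which the conformal comparison reduces to the one-line identity $\bar q_x(\pi_x(w)) = q_{2,n}(w)$ — together with being careful that one is indeed using the correct representative of the Einstein conformal class on null-cone sections. A more computational alternative would be to pick a spatio-temporal decomposition adapted to $\mathrm{x}$, so the lightcone of $\mathrm{x}$ becomes a coordinate lightcone, and to write the change of coordinates to affine (Minkowski) coordinates explicitly; but the null-cone-section argument above both avoids those computations and exhibits the conformal factor transparently.
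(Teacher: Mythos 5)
Your proposal is correct and follows essentially the route the paper intends: the paper omits the proof (the \qed), relying on the preceding observation that $q_{2,n}$ descends to a signature-$(1,n-1)$ form on $x^\perp/\vect(x)$, which together with Lemma \ref{lemma: affine chart} gives the flat Lorentzian structure. Your null-cone-section computation identifying this flat conformal class with the one induced from $\mathsf{Ein}_{1,n-1}$ is exactly the verification the paper leaves implicit, and it is carried out correctly.
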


\begin{definition}
We call \emph{affine chart} of $\mathsf{Ein}_{1,n-1}$ any open domain of the form $M(\mathrm{x})$.
\end{definition}

In the double cover $Ein_{1,n-1}$, the complement of the lightcone of a point $\mathrm{x}$ is the disjoint union of two conformal Minkowski spacetimes $M(\mathrm{x})$ and $M(-\mathrm{x})$ where $M(\mathrm{x})$ is the open subset of $Ein_{1,n-1}$ given by
\begin{align*}
M(\mathrm{x}) &= \{\mathrm{y} \in Ein_{1,n-1}:\ <x, y>_{2,n} < 0\ \mathrm{with}\ [x] = \mathrm{x},\ [y] = \mathrm{y}\}.
\end{align*}

In the universal Einstein universe, we show that a point $p$ defines three affine charts. Let $\Mink_0(p)$ be the set of points which are not causally related to $p$:
\begin{align*}
\Mink_0(p) &= \eeu \backslash (J^+(p) \cup J^-(p)).
\end{align*}
Notice that $\Mink_0(p)$ corresponds exactly to the diamond $I(\sigma(p), \sigma^{-1}(p))$ (see Figure \ref{fig: affine charts in ein univ}).

\begin{lemma}\label{lemma: affine chart in ein univ}
The restriction of the projection $\pi: \eeu \to Ein_{1,n-1}$ to $\Mink_0(p)$ is injective. Moreover, its image is equal to the affine chart $M(\mathrm{x})$ with $\mathrm{x} = \pi(p)$.
\end{lemma}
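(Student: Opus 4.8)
The plan is to work in the chosen spatio-temporal decomposition $\eeu \simeq \mathbb{S}^{n-1}\times\R$, realized from a timelike plane $P=\vect(e_u,e_v)$ so that a point $(x,t)\in\mathbb{S}^{n-1}\times\R$ projects under $\pi$ to $[(\cos t)\,e_u+(\sin t)\,e_v+x]\in Ein_{1,n-1}$, the deck group of $\pi$ is generated by $\delta\colon(x,t)\mapsto(x,t+2\pi)$, and the round distance $d$ on $\mathbb{S}^{n-1}$ is the one with diameter $\pi$. Writing $p=(x_0,t_0)$, the description of the causal structure of $\eeu$ recalled above gives $\Mink_0(p)=\{(x,t):d(x,x_0)>|t-t_0|\}$; since $d(x,x_0)\le\pi$, every point of $\Mink_0(p)$ lies in the open time-slab $S:=\mathbb{S}^{n-1}\times(t_0-\pi,t_0+\pi)$.

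For the injectivity statement I would argue that any two points of $S$ have time coordinates differing by strictly less than $2\pi$, hence cannot be exchanged by a nonzero power of $\delta$; therefore $\pi$ is injective on $S$, a fortiori on $\Mink_0(p)\subset S$. (Equivalently, this reflects the fact, noted before the statement, that $\Mink_0(p)$ is the diamond $I(\sigma(p),\sigma^{-1}(p))$, whose tips $\sigma^{\pm1}(p)=(-x_0,t_0\pm\pi)$ sit on the two slices bounding $S$.)

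For the image, the key computation is that, choosing the representatives $\tilde p=(\cos t_0,\sin t_0,x_0)$ of $\mathrm x=\pi(p)$ and $\tilde q=(\cos t,\sin t,x)$ of $\pi(q)$ for $q=(x,t)$, bilinearity gives
\[
\langle \tilde p,\tilde q\rangle_{2,n}=\langle x_0,x\rangle-\cos(t-t_0)=\cos d(x,x_0)-\cos(t-t_0),
\]
so that $\pi(q)\in M(\mathrm x)\iff\cos d(x,x_0)<\cos(t-t_0)$. If $q\in\Mink_0(p)$, then $0\le|t-t_0|<d(x,x_0)\le\pi$, and strict monotonicity of $\cos$ on $[0,\pi]$ yields $\cos d(x,x_0)<\cos(t-t_0)$, hence $\pi(\Mink_0(p))\subseteq M(\mathrm x)$. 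Conversely, given $\mathrm y\in M(\mathrm x)$, I would pick a lift $q=(x,t)$: the inequality $\cos d(x,x_0)<\cos(t-t_0)\le1$ forces $d(x,x_0)>0$, and after replacing $t$ by $t+2\pi k$ — which changes neither $\pi(q)$ nor the inequality — we may assume $|t-t_0|\le\pi$, then $|t-t_0|<\pi$ (otherwise $\cos(t-t_0)=-1$, impossible), and again strict monotonicity of $\cos$ on $[0,\pi]$ gives $|t-t_0|<d(x,x_0)$, i.e.\ $q\in\Mink_0(p)$ with $\pi(q)=\mathrm y$. This yields $M(\mathrm x)\subseteq\pi(\Mink_0(p))$, and the two inclusions give equality.

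The argument is essentially bookkeeping of the identification $\eeu\simeq\mathbb{S}^{n-1}\times\R$ together with the numerical coincidence between the diameter $\pi$ of $\mathbb{S}^{n-1}$ and the translation length $2\pi$ of the generator $\delta$ of the deck group. The one point deserving care is to check that the sign convention in the definition of $M(\mathrm x)$ indeed selects the right component of the complement of the lightcone of $\mathrm x$ (i.e.\ not $M(-\mathrm x)$); this is exactly what the strict monotonicity of $\cos$ on $[0,\pi]$ guarantees, and is consistent with the observation that $\pi$ sends the two tips $\sigma^{\pm1}(p)$ of $\Mink_0(p)$ onto the single point $-\mathrm x$ lying on the lightcone of $\mathrm x$.
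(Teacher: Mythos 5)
Your proof is correct, but it takes a genuinely different route from the paper. The paper's own proof is two lines: injectivity is deduced from the observation that $\Mink_0(p)=I(\sigma(p),\sigma^{-1}(p))$ contains no conjugate points, and the identification of the image with $M(\mathrm{x})$ is outsourced to a cited lemma of Andersson et al. You instead carry out the whole verification by hand in the spatio-temporal coordinates $\eeu\simeq\mathbb{S}^{n-1}\times\R$: the slab bound $|t-t_0|<\pi$ (coming from the diameter of the round sphere being half the translation length of $\delta$) gives injectivity directly, and the identity $\langle\tilde p,\tilde q\rangle_{2,n}=\cos d(x,x_0)-\cos(t-t_0)$ together with strict monotonicity of $\cos$ on $[0,\pi]$ gives both inclusions for the image. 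What your approach buys is self-containedness and an explicit check of the sign convention distinguishing $M(\mathrm{x})$ from $M(-\mathrm{x})$ \textemdash{} a point the paper's citation leaves implicit; what the paper's approach buys is brevity and an argument phrased purely in terms of the causal structure, with no choice of coordinates. One small remark: your injectivity argument via the slab is in fact cleaner than the paper's, since the deck group of $\pi$ is generated by $\delta=\sigma^2$ while conjugacy as defined in the paper only concerns the single power $\sigma$, so the paper's one-line deduction requires a moment's extra thought that your time-coordinate bound avoids.
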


\begin{proof}
The set $\Mink_0(p) = I(\sigma(p), \sigma^{-1}(p))$ does not contain conjugate points. Then, the restriction of $\pi$ to $\Mink_0(p)$ is injective. The fact that the image under $\pi$ of $\Mink_0(p)$ is equal to $M(\mathrm{x})$ follows immediately from \cite[Lemma 10.13]{andersson2012}.
\end{proof}

Lemma \ref{lemma: affine chart in ein univ} motivates the following definition.

\begin{definition}
We call \emph{affine chart} of the universal Einstein universe any open domain $\Mink_0(p)$.
\end{definition}

Notice that a point $p \in \eeu$ defines three distinct affine charts:
\begin{enumerate}
\item $\Mink_0(p)$, which is the set of points non-causally related to $p$;
\item $\Mink_+(p) := \Mink_0(\sigma(p))$, contained in the chronological future of $p$;
\item $\Mink_-(p) := \Mink_0(\sigma^{-1}(p))$, contained in the chronological past of $p$.
\end{enumerate}

Remark that the photons of an affine chart are all complete lightlike geodesics of $\eeu$ (see Figure \ref{fig: affine charts in ein univ}).

\begin{figure}[h!]
\centering
\includegraphics[scale=1]{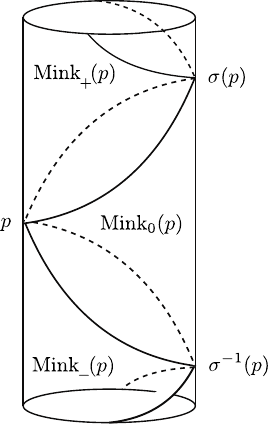}
\caption{Affine charts defined by $p \in \eeu$ for $n = 2$.}
\label{fig: affine charts in ein univ}
\end{figure}

\subsection{Diamonds in the universal Einstein universe} \label{sec: diamonds in the Einstein universe}

In the universal Einstein universe, diamonds fall into two categories: those that contain \emph{conjugate points} in their interior and those that do not. Observe that the latter are necessarily contained in some affine chart. They split into two categories: those that contain \emph{complete lightlike geodesics} in their interior and those that do not. 

Up to conformal diffeomorphism, we distinguish two types of diamonds whose interior does not contain conjugate points but contains complete lightlike geodesics:

\begin{enumerate}
\item \textbf{the affine charts}: they are conformally equivalent to Minkowski spacetime;
\item \textbf{the half-spaces bounded by an affine degenerate hyperplane in an affine chart}: they all have the same conformal structure. Indeed, let $J(p,q)$ and $J(p',q')$ two such diamonds. Up to replacing $J(p',q')$ by its image under a conformal diffeomorphism of $\eeu$ sending $p'$ on $p$, we can assume that $p = p'$. We look then for an element of the stabilizer of $p$ in the group of conformal of diffeomorphisms of $\eeu$ that sends $q'$ on $q$. The stabilizer of $p$ is isomorphic to the group of similarities of the affine chart $\Mink_-(p)$. The diamonds $J(p,q)$ and $J(p,q')$ can be seen as the futures of two degenerate affine hyperplanes, $H$ and $H'$, in the affine chart $\Mink_-(p)$. Since $O(1,n)$ acts transitively on the isotropic directions of $\Mink_-(p)$, there exists an affine isometry $f$ of $\Mink_-(p)$ that sends $H$ on $H'$. Thus, $f$ defines a conformal diffeomorphism between $J(p,q)$ and $J(p,q')$.
\end{enumerate}

Diamonds that contain neither conjugate points nor complete lightlike geodesics in their interior can be realized as diamonds in some affine chart. These are described in the thesis of A. Chalumeau (see \cite[Sec. 1.4]{Chalumeau2024}). The author shows that they have all the same conformal structure, equivalent to $\hy^{n-1} \times \R$ equipped with the conformal class of the Lorentzian metric $g_{\hy^{n-1}} - dt^2$, where $g_{\hy^{n-1}}$ denotes the hyperbolic metric and $dt^2$ the usual metric on $\R$. 

One way to produce diamonds of Minkowski spacetime in the universal Einstein universe $\eeu$ is by considering a conformal $(n-2)$-sphere $\mathbb{S}^{n-2}$ in $\eeu$. More precisely, the set of points in $\eeu$ which are not causally related to this sphere is the union of two connected components, each one corresponding to an open diamond in Minkowski spacetime. Indeed, given a conformal $(n-1)$-sphere $\mathbb{S}^{n-1}$ containg $\mathbb{S}^{n-2}$, the sphere $\mathbb{S}^{n-2}$ disconnects $\mathbb{S}^{n-1}$ in two disjoint open balls $B$ and $B'$. The stereographic projection with respect to a point $p \in B'$ sends the sphere $\mathbb{S}^{n-1}$ on an Euclidean hyperplane in the affine chart $\Mink_0(p)$, containing the ball $B$. It is then easy to see that the set of points in $\Mink_0(p)$ which are not causally related to $\mathbb{S}^{n-2} = \partial B$ is the union of two disjoint connected components:
\begin{itemize}
\item a bounded component, corresponding to the Cauchy development of $B$ in $\Mink_0(p)$ \textemdash which is a diamond of $\Mink_0(p)$, denoted by $D$;
\item a non-bounded component, corresponding to set of points in $\Mink_0(p)$ which are not causally-related to the closure of $B$.
\end{itemize}
By symmetry, the stereographic projection with respect to a point $p' \in B$ shows that this last component corresponds to the Cauchy development of $B'$ in $\Mink_0(p')$ \textemdash which is a diamond in $\Mink_0(p')$, denoted by $D'$.\\

Last but not least, the diamonds that contain conjugate points in their interior are homeomorphic to a cylinder $\mathbb{S}^{n-1} \times \R$. They do not all share the same conformal structure. Indeed, let $J(p,q)$ and $J(p',q')$ be two such diamonds. Up to conformal diffeomorphism, we can assume that $p' = p$. Then, the diamonds $J(p,q)$ and $J(p,q')$ are conformally equivalent if and only if $q$ and $q'$ are in the same orbit under the action of the stabilizer of $p$ on $\eeu$.

\subsection{Intersection of diamonds in the universal Einstein universe} \label{sec: intersection of diamonds}

In Minkowski spacetime, the intersection of two diamonds is always connected, as diamonds are convex. However, in the universal Einstein universe $\eeu$, the situation is more subtle since there is no notion of convexity anymore. In this section, we present an example of two diamonds in $\eeu$ whose intersection is non-empty and disconnected.

To construct such an example, we consider two diamonds, $D$ and $D'$, in $\eeu$, each conformally equivalent to $\hy^{n-1} \times \R$, satisfying the following conditions:
\begin{enumerate}
\item $D$ and $D'$ are not contained in the same affine chart;
\item the initial intersection of $D$ and $D'$ is non-empty and connected.
\end{enumerate}
We then deform $D$ such that its intersection with $D'$ remains non-empty but becomes disconnected.

To put this into practice, let us fix an affine chart in $\eeu$, identified with Minkowski spacetime $\R^{1,n-1}$. Let $(x,y,z)$ denote a coordinate system, where $x = (x_1, \ldots, x_{n-2}) \in \R^{n-2}$ and $y,z \in \R$, such that the quadratic form $q_{1,n-1}$ of Minkowski spacetime is expressed by
\begin{align*}
q_{1,n-1}(x,y,z) &= x_1^2 + \ldots + x_{n-2}^2 + y^2 - z^2.
\end{align*}
Let $D$ be the diamond $I(p,q)$ of $\R^{1,n-1}$ where $p = (0,0,1)$ and $q = (0,0,-1)$. It intersects the hyperplane $z = 0$ in an open ball $B$ centered in $O = (0,0,0)$. Let $B'$ be an open ball in this hyperplane, also centered in $O$, with radius less than that of $B$. Let $D'$ be the diamond of $\eeu$ whose intersection with the affine chart $\R^{1,n-1}$ is exactly the set of points which are not causally related to the closure of $B'$. It is clear that the diamonds $D$ and $D'$ have a non-empty and connected intersection. In particular, the intersection of $D$, $D'$ and the hyperplane $z = 0$ is the annulus $B \backslash \overline{B'}$ (see Figure \ref{fig: intersection of diamonds}).

\begin{figure}[h!] 
\centering
\includegraphics[scale=1.5]{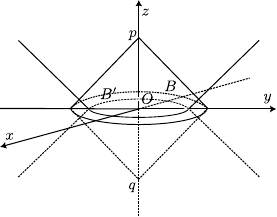}
\caption{Intersection of two diamonds in $\widetilde{Ein}_{1,2}$.}
\label{fig: intersection of diamonds}
\end{figure}

Now, we deform $D$ by applying a loxodromic element $\gamma$ of $O(1,n-1)$ defined, in the coordinate system $(x,a,b)$ where $a = y + z$ and $b = y - z$, by the matrix
\begin{align*}
\left(\begin{array}{ccc}
I_{n-2} &              & \\
        & \lambda^{-1} &  \\
        &              & \lambda 
\end{array} \right)
\end{align*}
where $\lambda > 1$ and $I_{n-2}$ denotes the identity matrix of size $(n-2)$. 

Note that since the $(n-2)$-plane $\{y = z = 0\}$ is preserved by $\gamma_k$, its intersection with $\gamma_k.D \cap D'$ is equal to its intersection with $D \cap D'$. This shows that the intersection $\gamma_k.D \cap D'$ is non-empty for every integer $k$. 

Now, we show that the timelike hyperplane
\begin{align*}
H_1 := \{x_1 = 0\}
\end{align*}
disconnects the intersection $\gamma_k.D \cap D'$ when $k$ is big enough. 

On the one hand, the intersection of $D$ with the hyperplane $H_1$ is precisely the diamond $I(p,q)$ in this hyperplane. On the other hand, the intersection of $D'$ with $H_1$ is the set of points which are not causally related to the ball defined by the intersection of the ball $B'$ with $H_1$. In dimension $3$, the intersection of $D$, $D'$ and the hyperplane $H_1$ is the disjoint union of two diamond shaped regions (see Figure \ref{fig: coupe verticale}).

\begin{figure}[h!]
\centering
\includegraphics[scale=1.5]{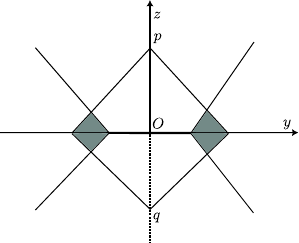}
\caption{Intersection of $D$ and $D'$ in the hyperplane $H_1$ in dimension $3$.}
\label{fig: coupe verticale}
\end{figure}

Let $\Delta_+$ and $\Delta_-$ be the lines spanned respectively by $(0,-1,1)$ and $(0,1,1)$ in the coordinate system $(x,y,z)$. They correspond to the attracting and the repulsing isotropic directions of the sequence $(\gamma_k)_k$. When $k$ tends to $+\infty$, the diamonds $\gamma_k.D$ accumulates on the line $\Delta_+$. Therefore, when $k$ is big enough, the intersection of $\gamma_k.D$, $D'$ and the hyperplane $H_1$ is empty (see Figure \ref{fig: coupe verticale post-deformation}). It follows that the hyperplane $H_1$ disconnects $\gamma_k.D \cap D'$ when $k$ is big enough. Hence, $\gamma_k.D$ and $D'$ are two diamonds of $\eeu$ whose intersection is non-empty and disconnected.

\begin{figure}[h!]
\centering
\includegraphics[scale=1.5]{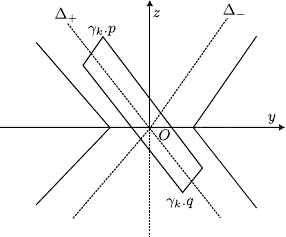}
\caption{The diamonds $\gamma_k.D$ and $D'$ in the hyperplane $H_1$ in dimension $3$.}
\label{fig: coupe verticale post-deformation}
\end{figure}

\subsection{Penrose boundary}

In this section, we describe the conformal boundary of Minkowski spacetime introduced by R. Penrose \cite{Penrose}. It is conformally equivalent to the boundary of an affine chart in Einstein universe.

\begin{definition}
Let $M(\mathrm{x})$ be an affine chart of $\mathsf{Ein}_{1,n-1}$. The regular part of the lightcone of $\mathrm{x}$ is called \emph{Penrose boundary} of $M(\mathrm{x})$ and is denoted $\mathcal{J}(\mathrm{x})$.
\end{definition}

The regular part of the lightcone of $\mathrm{x} \in \mathsf{Ein}_{1,n-1}$ is simply the lightcone of $\mathrm{x}$ minus the singular point $\mathrm{x}$, it is topologically a cylinder. In the double cover $Ein_{1,n-1}$, the Penrose boundary of an affine chart $M(\mathrm{x})$ refers to the regular part of the lightcone of $\mathrm{x} \in Ein_{1,n-1}$ which is the complement in the lightcone of $\mathrm{x}$ of the two singular points $\mathrm{x}$ and $-\mathrm{x}$. This is the union of two connected components $\mathcal{J}^+(\mathrm{x})$ and $\mathcal{J}^-(\mathrm{x})$ where 
\begin{itemize}
\item $\mathcal{J}^+(\mathrm{x})$ fibers trivially over the sphere $\mathrm{S}^+(\mathrm{x})$ of future lightlike directions at $\mathrm{x}$;
\item $\mathcal{J}^-(\mathrm{x})$ fibers trivially over the sphere $\mathrm{S}^-(\mathrm{x})$ of past lightlike directions at $\mathrm{x}$.
\end{itemize}
The fiber over a direction $[v] \in \mathrm{S}^\pm(\mathrm{x})$ is the lightlike geodesic contained in $\mathcal{J}^\pm(\mathrm{x})$ tangent to $v$ at $\mathrm{x}$.

\begin{figure}[h!]
\centering
\includegraphics[scale=1.4]{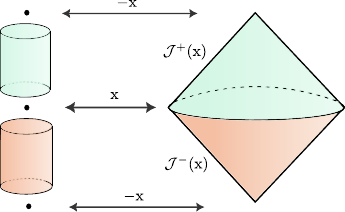}
\caption{Penrose boundary of an affine chart $M(\mathrm{x})$ in $Ein_{1,2}$.}
\end{figure}

In the universal cover $\eeu$, the Penrose boundary of an affine chart $\Mink_0(p)$ is the disjoint union of the regular parts of $\partial I^+(p)$ and $\partial I^-(p)$, denoted $\mathcal{J}^+(p)$ and $\mathcal{J}^-(p)$ respectively.

\paragraph{Penrose boundary and degenerate affine hyperplanes of an affine chart.} Let $M(\mathrm{x})$ be an affine chart of $Ein_{1,n-1}$. We prove that each connected component, $\mathcal{J}^+(\mathrm{x})$ and $\mathcal{J}^-(\mathrm{x})$, of the Penrose boundary is in bijection with the space of degenerate affine hyperplanes of $M(\mathrm{x})$. We write it for $\mathcal{J}^+(\mathrm{x})$ but of course it is similar for $\mathcal{J}^-(\mathrm{x})$.

\begin{lemma}
The intersection of the affine chart $M(\mathrm{x})$ with the lightcone of a point $\mathrm{y} \in \mathcal{J}^+(\mathrm{x})$ is a degenerate affine hyperplane of $M(\mathrm{x})$.
\end{lemma}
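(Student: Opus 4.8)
The plan is to work in the Klein model. Fix a representative $x \in \R^{2,n}$ of $\mathrm{x}$ with $q_{2,n}(x) = 0$, and pick a point $\mathrm{y} \in \mathcal{J}^+(\mathrm{x})$, i.e. a point on the (regular part of the) lightcone of $\mathrm{x}$, distinct from $\mathrm{x}$ and $-\mathrm{x}$. Choosing a representative $y$ of $\mathrm{y}$, the condition $\mathrm{y} \in \mathrm{Lightcone}(\mathrm{x})$ translates to $\langle x, y \rangle_{2,n} = 0$, while $\mathrm{y} \neq \pm \mathrm{x}$ means $x$ and $y$ are linearly independent. The lightcone of $\mathrm{y}$ is $\{\mathrm{z} : \langle y, z\rangle_{2,n} = 0\}$, so its intersection with $M(\mathrm{x}) = \{\mathrm{z} : \langle x, z\rangle_{2,n} \neq 0\}$ is the set of $\mathrm{z} = [z]$ with $\langle y, z\rangle_{2,n} = 0$ and $\langle x, z\rangle_{2,n} \neq 0$.

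Next I would transport this to the affine picture using the chart map of Lemma \ref{lemma: affine chart}. Normalize the representative $x$ so that $f_x$ identifies $M(\mathrm{x})$ with the affine space $x^\perp/\vect(x)$ via $\mathrm{z} \mapsto [z]$ where $z$ is the representative of $\mathrm{z}$ with $\langle z, x\rangle_{2,n} = -1/2$. Under this identification, a point $\mathrm{z}$ of the intersection corresponds to the class $[z] \in x^\perp/\vect(x)$ of a vector $z$ satisfying the two linear conditions $\langle z, x\rangle_{2,n} = -1/2$ and $\langle z, y \rangle_{2,n} = 0$. The locus $\{\langle z, y\rangle_{2,n} = 0\}$ is a linear hyperplane of $\R^{2,n}$; intersecting with the affine hyperplane $\{\langle z, x\rangle_{2,n} = -1/2\}$ (which is a model for the affine chart) and pushing to the quotient by $\vect(x)$ — note $\langle x, y\rangle_{2,n} = 0$ means $y \in x^\perp$, so $\langle \cdot, y\rangle_{2,n}$ descends to a well-defined linear functional on $x^\perp$, vanishing on $\vect(x)$ precisely because... here one must check $\langle x, y\rangle_{2,n}=0$, which holds — yields an affine hyperplane of $x^\perp/\vect(x)$, i.e. an affine hyperplane $H$ of $M(\mathrm{x})$. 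One must also confirm this hyperplane is nonempty and not all of the chart, which follows from $x, y$ being independent and from $y \notin \vect(x)$.

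Finally I would verify that $H$ is \emph{degenerate} for the induced Minkowski metric. The affine direction of $H$ is the linear subspace $(y^\perp \cap x^\perp)/\vect(x)$ of $x^\perp/\vect(x)$. The induced quadratic form on $x^\perp/\vect(x)$ is the descent of $q_{2,n}|_{x^\perp}$; its radical in $x^\perp$ is $\vect(x)$ (since $x$ is isotropic), so the form on the quotient is nondegenerate of signature $(1,n-1)$. The direction of $H$ is degenerate iff the restriction of this form to $(y^\perp \cap x^\perp)/\vect(x)$ has a radical, equivalently iff there is a null vector orthogonal to the whole direction; the natural candidate is the class $[y]$ itself. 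Since $y \in x^\perp$ and $q_{2,n}(y) = 0$ (as $\mathrm{y} \in Ein$), the vector $y$ descends to a nonzero isotropic vector $[y] \in x^\perp/\vect(x)$ ($y \notin \vect(x)$ because $x, y$ independent), and $[y]$ lies in the direction of $H$ since $\langle y, y\rangle_{2,n} = 0$; moreover $[y]$ is orthogonal to the entire direction of $H$ because every vector $z$ with $\langle z, y\rangle_{2,n} = 0$ pairs trivially with $y$. Hence $[y]$ spans the (one-dimensional) radical of the form restricted to the direction of $H$, so $H$ is a degenerate affine hyperplane. The main obstacle is purely bookkeeping: keeping the normalization of representatives consistent and making sure the various ``$\neq 0$'' nondegeneracy conditions (independence of $x$ and $y$, nonemptiness of $H$, one-dimensionality of the radical) are all genuinely implied by $\mathrm{y} \in \mathcal{J}^+(\mathrm{x})$ rather than assumed.
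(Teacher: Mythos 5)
Your proposal is correct and follows essentially the same route as the paper: both use the chart map $f_x$ to identify the intersection with an affine subspace of $x^\perp/\vect(x)$ whose direction is the orthogonal of the nonzero isotropic vector $[y]$ (which exists and is nonzero precisely because $\langle x,y\rangle_{2,n}=0$ and $\mathrm{y}\neq\pm\mathrm{x}$). You merely make explicit the degeneracy check that the paper leaves as ``easy to check,'' namely that $[y]$ spans the radical of the induced form on that direction.
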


\begin{proof}
Let $x, y \in \R^{2,n}$ be two representants of $\mathrm{x}$ and $\mathrm{y}$ respectively. On the one hand, $<y,x>_{2,n} = 0$, i.e. $\vect(y) \subset x^\perp$. On the other hand, since $\mathrm{y} \not \in \{\mathrm{x}, -\mathrm{x}\}$, the lightlike line $\vect(y)$ is transverse to $\vect(x)$. Therefore, the projection $[y]$ of $y$ in the quotient $x^\perp/\vect(x)$ is a non-trivial isotropic vector. It is easy to check that the map $f_x$ defined above induces on the intersection of $M(\mathrm{x})$ with the lightcone of $\mathrm{y}$ an affine structure of direction the orthogonal of $[y]$ in $x^\perp/\vect(x)$. The lemma follows.
\end{proof}

\begin{lemma} \label{lemma: Penrose boundary and lightlike directions of Minkowski spacetime}
Every lightlike geodesic of the Penrose boundary $\mathcal{J}^+(\mathrm{x})$ defines a unique lightlike direction of the affine chart $M(\mathrm{x})$ and vice versa.
\end{lemma}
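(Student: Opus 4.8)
The plan is to realize the bijection concretely in the Klein model. Recall that a photon of $Ein_{1,n-1}$ is $\mathbb{P}(\Pi)\cap Ein_{1,n-1}$ for a totally isotropic $2$-plane $\Pi\subset\R^{2,n}$, and that, by the fibration of $\mathcal{J}^+(\mathrm{x})$ over $\mathrm{S}^+(\mathrm{x})$ recalled just above, every lightlike geodesic $\gamma$ of $\mathcal{J}^+(\mathrm{x})$ is one of the two arcs obtained by deleting $\mathrm{x}$ and $-\mathrm{x}$ from such a photon passing through $\mathrm{x}$. Fixing a representative $x$ of $\mathrm{x}$, the totally isotropic $2$-plane $\Pi_\gamma$ supporting $\gamma$ then contains $x$, hence $\Pi_\gamma\subset x^\perp$, and $\Pi_\gamma/\vect(x)$ is a $1$-dimensional subspace of $x^\perp/\vect(x)$ on which $q_{2,n}$ vanishes (since it does on $\Pi_\gamma$). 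By Proposition \ref{prop: affine chart}, $x^\perp/\vect(x)$ is the direction space of the conformal Minkowski spacetime $M(\mathrm{x})$, so $\Pi_\gamma/\vect(x)$ is a lightlike direction of $M(\mathrm{x})$; this is the direction we attach to $\gamma$. It is well defined because $\Pi_\gamma$ is the unique totally isotropic $2$-plane whose projectivization contains $\gamma$, and independent of the chosen representative $x$ by Lemma \ref{lemma: affine chart}. Moreover, by the previous Lemma, for every $\mathrm{y}=[y]\in\gamma$ the line $\Pi_\gamma/\vect(x)=\vect(x,y)/\vect(x)$ is exactly the degenerate direction of the affine hyperplane cut on $M(\mathrm{x})$ by the lightcone of $\mathrm{y}$; thus the direction attached to $\gamma$ is the common direction of all these degenerate hyperplanes as $\mathrm{y}$ runs over $\gamma$, which gives the geometric meaning of the correspondence.

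For the converse, given a lightlike direction $\ell$ of $M(\mathrm{x})$, i.e. an isotropic line of $x^\perp/\vect(x)$, I would let $\Pi$ be the preimage of $\ell$ under the quotient map $x^\perp\to x^\perp/\vect(x)$, a $2$-plane containing $\vect(x)$, and check that $\Pi$ is totally isotropic: for $u=ax+u_0$ and $w=bx+w_0$ in $\Pi$ with $[u_0],[w_0]\in\ell$, one has $\langle u,w\rangle_{2,n}=\langle u_0,w_0\rangle_{2,n}$ since $x\in x^\perp$ and $q_{2,n}(x)=0$, and this vanishes because the bilinear form induced on the isotropic line $\ell$ is zero. Hence $\mathbb{P}(\Pi)\cap Ein_{1,n-1}$ is a photon through $\mathrm{x}$ and $-\mathrm{x}$; deleting these two points yields two lightlike geodesics, each contained in the regular lightcone of $\mathrm{x}$ and therefore, by connectedness, entirely in $\mathcal{J}^+(\mathrm{x})$ or entirely in $\mathcal{J}^-(\mathrm{x})$. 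Since near $\mathrm{x}$ this photon is a null geodesic whose two half-directions at $\mathrm{x}$ are respectively future- and past-pointing, exactly one of the two arcs lies in $\mathcal{J}^+(\mathrm{x})$; I call it $\gamma_\ell$. The two constructions are mutually inverse because $\Pi\mapsto\Pi/\vect(x)$ and $\ell\mapsto$ (preimage of $\ell$) are mutually inverse bijections between totally isotropic $2$-planes containing $\vect(x)$ and isotropic lines of $x^\perp/\vect(x)$, which establishes the lemma.

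I expect the only genuinely delicate point to be this last one: verifying that of the two arcs issued from $\mathrm{x}$ exactly one lands in $\mathcal{J}^+(\mathrm{x})$, i.e. that the construction is compatible with the time orientation and does not fall into $\mathcal{J}^-(\mathrm{x})$. This is settled by the local description of the lightcone of $\mathrm{x}$ together with the already-recalled trivial fibration $\mathcal{J}^+(\mathrm{x})\to\mathrm{S}^+(\mathrm{x})$; in fact, tracking it through shows that $\gamma\mapsto\Pi_\gamma/\vect(x)$ identifies the base $\mathrm{S}^+(\mathrm{x})$ of this fibration with the set of lightlike directions of $M(\mathrm{x})$, so everything reduces to the purely linear-algebraic bijection above once this orientation bookkeeping is in place.
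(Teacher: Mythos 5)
Your proposal is correct and follows essentially the same route as the paper: both identify a lightlike geodesic of $\mathcal{J}^+(\mathrm{x})$ with the totally isotropic $2$-plane containing $\vect(x)$ that supports it, and then pass to the quotient $x^\perp/\vect(x)$ to obtain an isotropic line, i.e.\ a lightlike direction of $M(\mathrm{x})$. You simply spell out the details (total isotropy of the preimage plane, and the fact that exactly one of the two arcs of the photon lies in $\mathcal{J}^+(\mathrm{x})$) that the paper's two-sentence proof leaves implicit.
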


\begin{proof}
Let $x \in \R^{2,n}$ be a representant of $\mathrm{x}$. Recall that the vector space associated to the affine chart $M(\mathrm{x})$ is  $x^\perp/\vect(x)$. A lightlike geodesic of $\mathcal{J}^+(\mathrm{x})$ is the intersection of $Ein_{1,n-1}$ with the projectivization of a totally isotropic $2$-plane containing $x$. Therefore, a lightlike geodesic of $\mathcal{J}^+(\mathrm{x})$ is equivalent to the data of an isotropic vector in $x^\perp$ transverse to $\vect(x)$, in other words the data of an isotropic vector of $x^\perp/\vect(x)$.
\end{proof}

\begin{lemma}\label{lemma: section of Penrose boundary}
The intersection of the Penrose boundary $\mathcal{J}^+(\mathrm{x})$ with the lightcone of a point of the affine chart $M(\mathrm{x})$ is a section of the trivial fiber bundle $\mathcal{J}^+(\mathrm{x}) \to \mathrm{S}^+(\mathrm{x})$.
\end{lemma}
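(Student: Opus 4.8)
The plan is to work in the Klein model. Represent $\mathrm x$ by $x\in\R^{2,n}$ and the given point of $M(\mathrm x)$ by $a\in\R^{2,n}$; the condition $a\in M(\mathrm x)$ means exactly $\langle a,x\rangle_{2,n}\neq0$, so $\vect(a,x)$ is a nondegenerate hyperbolic plane, $\{a,x\}^\perp$ is a nondegenerate subspace of signature $(1,n-1)$, and $x\notin a^\perp$. The lightcone of $\mathrm a$ in $Ein_{1,n-1}$ is $C(\mathrm a)=\{[y]\in Ein_{1,n-1}:\langle y,a\rangle_{2,n}=0\}$, so $C(\mathrm a)\cap(\text{lightcone of }\mathrm x)=Ein_{1,n-1}\cap\mathbb S(\{a,x\}^\perp)$, which is the set of isotropic rays of a nondegenerate Lorentzian form; hence it is a disjoint pair of smooth conformal $(n-2)$-spheres. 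Since $\langle a,x\rangle_{2,n}\neq0$ keeps $\pm\mathrm x$ out of $C(\mathrm a)$, these two spheres lie in the regular part $\mathcal J^+(\mathrm x)\sqcup\mathcal J^-(\mathrm x)$, each entirely inside one of the two components. In particular $C(\mathrm a)\cap\mathcal J^+(\mathrm x)$ is a smooth submanifold of $\mathcal J^+(\mathrm x)$ (a union of some of these spheres); the plan is then to show it meets each fiber of $\mathcal J^+(\mathrm x)\to\mathrm S^+(\mathrm x)$ in exactly one point, transversally.

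To carry out the count, I fix a future lightlike direction at $\mathrm x$; by the description of $\mathcal J^+(\mathrm x)$ recalled above together with Lemma \ref{lemma: Penrose boundary and lightlike directions of Minkowski spacetime}, its fiber is the open arc inside $\mathcal J^+(\mathrm x)$ of the photon $\mathbb S(\Pi)$ carried by the unique totally isotropic $2$-plane $\Pi$ with $x\in\Pi$ tangent to that direction. As $\Pi$ is totally isotropic and contains $x$, every line of $\Pi$ is isotropic and $\Pi\subset x^\perp$, so $\mathbb S(\Pi)$ lies in the lightcone of $\mathrm x$; and since $x\notin a^\perp$, the intersection $\Pi\cap a^\perp$ is a single isotropic line $\vect(w)$ with $[w]\neq\pm\mathrm x$. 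Hence $\mathbb S(\Pi)$ meets $C(\mathrm a)$ in the antipodal pair $\{[w],[-w]\}$. Since $\mathbb S(\Pi)$ also contains the two singular points $\mathrm x,-\mathrm x$, and the antipodal map $z\mapsto-z$ swaps $\mathrm x$ with $-\mathrm x$, it swaps the open arcs $\mathbb S(\Pi)\cap\mathcal J^+(\mathrm x)$ and $\mathbb S(\Pi)\cap\mathcal J^-(\mathrm x)$; therefore exactly one of $[w],[-w]$ lies on our fiber. So $C(\mathrm a)\cap\mathcal J^+(\mathrm x)$ meets every fiber in exactly one point.

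It remains to check transversality. At the point $[w]$ — a regular point of $C(\mathrm a)$, since $[w]\neq\pm\mathrm a$ because $\langle w,x\rangle_{2,n}=0\neq\langle a,x\rangle_{2,n}$ — the tangent line to the photon is the image of $\Pi$ in $w^\perp/\vect(w)$ while $T_{[w]}C(\mathrm a)$ is the image of $a^\perp\cap w^\perp$, and $\Pi\cap a^\perp=\vect(w)$ forces these to meet only at $0$. Thus $C(\mathrm a)\cap\mathcal J^+(\mathrm x)$ is transverse to every fiber of $\mathcal J^+(\mathrm x)\to\mathrm S^+(\mathrm x)$, so the bundle projection restricts on it to a local diffeomorphism; being also a bijection onto $\mathrm S^+(\mathrm x)$, this restriction is a diffeomorphism, which exhibits $C(\mathrm a)\cap\mathcal J^+(\mathrm x)$ as the image of a smooth section of $\mathcal J^+(\mathrm x)\to\mathrm S^+(\mathrm x)$. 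The only delicate steps are identifying which of the two antipodal points $[w],[-w]$ lands on the future arc — handled by the antipodal symmetry — and upgrading the set-theoretic bijection to a genuine (smooth) section, which is precisely what the transversality computation provides; everything else is linear algebra in $\R^{2,n}$.
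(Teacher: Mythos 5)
Your proof is correct and follows essentially the same route as the paper: both reduce the statement to the observation that the intersection is carried by $\{x,x_0\}^\perp$, which has signature $(1,n-1)$, hence gives a conformal $(n-2)$-sphere in the regular part of the lightcone of $\mathrm{x}$. You additionally spell out the one-point-per-fiber count (via $\Pi\cap a^\perp$ being a single isotropic line and the antipodal selection of the future arc) and the transversality needed to get a genuine smooth section, details the paper's proof only asserts.
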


\begin{proof}
Let $\mathrm{x}_0 \in M(\mathrm{x})$. Let $x, x_0 \in \R^{2,n}$ be two representant of $\mathrm{x}$ and $\mathrm{x}_0$ respectively. The intersection of $\mathcal{J}^+(\mathrm{x})$ with the lightcone of $\mathrm{x}_0$ is a connected component of the intersection of $Ein_{1,n-1}$ with the projectivization of $x^\perp \cap x_0^\perp$. Notice that $x^\perp \cap x_0^\perp = \vect(x,x_0)^\perp$. Since $<x,x_0>_{2,n} < 0$, the subspace $\vect(x,x_0)$ is of type $(1,1)$. Then, $\vect(v,v_0)^\perp$ is of type $(1,n-1)$. It follows that the intersection of $\mathcal{J}^+(\mathrm{x})$ with the lightcone of $\mathrm{x}_0$ is a conformal $(n-2)$-sphere that meets every lightlike geodesic of $\mathcal{J}^+(\mathrm{x})$. The lemma follows.
\end{proof}

Let $f$ be the map which associates to every point $\mathrm{y} \in \mathcal{J}^+(\mathrm{x})$ the intersection of the lightcone of $\mathrm{y}$ with the affine chart $M(\mathrm{x})$.

\begin{proposition}\label{prop: Penrose bound. and degenerate hyperplans}
The map $f$ is a natural bijection between $\mathcal{J}^+(\mathrm{x})$ and the space of degenerate affine hyperplanes of the affine chart $M(\mathrm{x})$.
\end{proposition}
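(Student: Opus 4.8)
The plan is to deduce the statement from the three preceding lemmas. The first of these three lemmas already shows that $f$ is well defined, i.e. that $f(\mathrm{y})$ is a degenerate affine hyperplane of $M(\mathrm{x})$; moreover, its proof shows that the direction of $f(\mathrm{y})$ is the orthogonal, in $x^\perp/\vect(x)$, of the isotropic line spanned by the projection $[y]$ of any representant $y$ of $\mathrm{y}$. It then remains to prove that $f$ is injective and surjective, and both will follow from Lemma \ref{lemma: section of Penrose boundary} applied to a suitable point of the hyperplane under consideration. The elementary observation underlying everything is that the incidence relation ``$\mathrm{y}$ lies on the lightcone of $\mathrm{z}$'' is symmetric, since for representants it simply reads $<y,z>_{2,n} = 0$.

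For injectivity, suppose $f(\mathrm{y}_1) = f(\mathrm{y}_2) =: H$. Comparing directions, the isotropic lines $\vect([y_1])$ and $\vect([y_2])$ of $x^\perp/\vect(x)$ both coincide with the (unique) radical $\ell$ of the direction of $H$; hence, by Lemma \ref{lemma: Penrose boundary and lightlike directions of Minkowski spacetime}, $\mathrm{y}_1$ and $\mathrm{y}_2$ lie on one and the same lightlike geodesic $\phi$ of $\mathcal{J}^+(\mathrm{x})$, namely the one associated with $\ell$, which is also a fiber of the bundle $\mathcal{J}^+(\mathrm{x}) \to \mathrm{S}^+(\mathrm{x})$. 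Choosing any point $\mathrm{x}_0 \in H$, the symmetry of the incidence relation gives that $\mathrm{y}_1$ and $\mathrm{y}_2$ both lie on the lightcone of $\mathrm{x}_0$, hence in the section of $\mathcal{J}^+(\mathrm{x}) \to \mathrm{S}^+(\mathrm{x})$ furnished by Lemma \ref{lemma: section of Penrose boundary}. A section meets the fiber $\phi$ exactly once, so $\mathrm{y}_1 = \mathrm{y}_2$.

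For surjectivity, let $H$ be a degenerate affine hyperplane of $M(\mathrm{x})$ and let $\ell \subset x^\perp/\vect(x)$ be the radical of its direction, a lightlike line. Let $\phi \subset \mathcal{J}^+(\mathrm{x})$ be the lightlike geodesic associated with $\ell$ by Lemma \ref{lemma: Penrose boundary and lightlike directions of Minkowski spacetime}, which is a fiber of $\mathcal{J}^+(\mathrm{x}) \to \mathrm{S}^+(\mathrm{x})$. Pick $\mathrm{x}_0 \in H$. By Lemma \ref{lemma: section of Penrose boundary}, the lightcone of $\mathrm{x}_0$ meets $\mathcal{J}^+(\mathrm{x})$ along a section of the bundle, hence meets the fiber $\phi$ at a single point $\mathrm{y}$. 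By symmetry of the incidence relation, $\mathrm{x}_0$ lies on the lightcone of $\mathrm{y}$, so $\mathrm{x}_0 \in f(\mathrm{y})$; and $f(\mathrm{y})$ is, by the first lemma, a degenerate affine hyperplane whose direction is $\ell^\perp$, the same as that of $H$. Since a degenerate affine hyperplane is determined by its direction together with any one of its points, $f(\mathrm{y}) = H$. Finally, $f$ is natural: it is equivariant under the stabilizer of $\mathrm{x}$ in $O(2,n)$, which acts by similarities on the affine chart $M(\mathrm{x})$ — permuting its degenerate affine hyperplanes — and acts on $\mathcal{J}^+(\mathrm{x})$.

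The only point requiring care is the bookkeeping that identifies, for a fixed lightlike line $\ell$ of $x^\perp/\vect(x)$, the three incarnations of the attached geodesic $\phi$: the lightlike geodesic of $\mathcal{J}^+(\mathrm{x})$ given by Lemma \ref{lemma: Penrose boundary and lightlike directions of Minkowski spacetime}, the fiber of $\mathcal{J}^+(\mathrm{x}) \to \mathrm{S}^+(\mathrm{x})$ over the corresponding future lightlike direction, and the locus of those $\mathrm{y}$ for which $f(\mathrm{y})$ has direction $\ell^\perp$. Once one checks (directly from the proofs of the first lemma and of Lemma \ref{lemma: Penrose boundary and lightlike directions of Minkowski spacetime}) that these three descriptions agree, injectivity and surjectivity become two instances of the very same one-line argument based on Lemma \ref{lemma: section of Penrose boundary}.
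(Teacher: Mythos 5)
Your proof is correct and takes essentially the same route as the paper: the paper constructs the inverse map $g$ exactly as in your surjectivity step (radical direction $\to$ fiber $\phi$ via Lemma \ref{lemma: Penrose boundary and lightlike directions of Minkowski spacetime}, then intersect with the section from Lemma \ref{lemma: section of Penrose boundary}) and leaves $g=f^{-1}$ as "easy to check". Your injectivity and surjectivity arguments are precisely that check carried out, so if anything your version is slightly more complete than the paper's.
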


\begin{proof}
We construct the inverse of $L$. Let $P$ be a degenerate affine hyperplane of $M(\mathrm{x})$. It is directed by the orthogonal of a lightlike direction of $M(\mathrm{x})$. By Lemma \ref{lemma: Penrose boundary and lightlike directions of Minkowski spacetime}, to this lightlike direction corresponds a unique lightlike geodesic $\varphi$ of $\mathcal{J}^+(\mathrm{x})$. Let $\mathrm{x}_0 \in P$. By Lemma \ref{lemma: section of Penrose boundary}, the intersection of the lightcone of $\mathrm{x}_0$ with $\mathcal{J}^+(\mathrm{x})$ meets every lightlike geodesic of $\mathcal{J}^+(\mathrm{x})$ in a unique point, in particular it meets $\varphi$ in a unique point $\mathrm{p}$. We call $g$ the map which sends $P$ on $\mathrm{p}$. It is easy to check that $g = f^{-1}$.
\end{proof}

\begin{remark}\label{remark: Penrose boundary in the universal cover}
Let $p \in \eeu$. Given a point $q \in \mathcal{J}^+(p)$, the intersection of the past lightcone of $q$ with the affine chart $\Mink_0(p)$ is a degenerate affine hyperplane $H(q)$. Therefore, 
\begin{itemize}
\item the intersection of $I^-(q)$ with $\Mink_0(p)$ is the chronological past of $H(q)$ in $\Mink_0(p)$;
\item the complement of $I^-(p)$ in $\Mink_0(p)$ is the chronological future of $H(q)$ in $\Mink_0(p)$.
\end{itemize}
Notice that $I^+(q)$ is disjoint from $\Mink_0(p)$. Hence, the chronological future of $H(q)$ is exactly the set of points of $\Mink_0(p)$ which are not causally related to $q$.
\end{remark}

\paragraph{Cylindrical model.}  In this paragraph, we precise the identification between the space of degenerate affine hyperplanes of Minkowski spacetime $\M^n$ and the cylinder $\mathbb{S}^{n-2} \times \R$. This identification requires the choice of:
\begin{enumerate}
\item an origin $p_0$ in the affine space $\M^n$;
\item a unit timelike vector $v_0$ in the underlying vector space $V$ of $\M^n$.
\end{enumerate}
We denote by $<.,.>_{1,n-1}$ the quadratic form of signature $(1,n-1)$ on $V$. The intersection of the lightcone of $V$ with the linear hyperplane $<.,v_0>_{1,n-1} = -1$ is a conformal sphere of dimension $(n-2)$, denoted $\mathcal{S}(v_0)$.
The map which associate to every couple $(v,s) \in \mathcal{S}(v_0) \times \R$ the degenerate affine hyperplane $-<. - p_0, v>_{1,n-1} = s$ defines a one-to-one parametrization of the space of degenerate affine hyperplanes of $\M^n$. Let us make a few remarks: 
\begin{enumerate}
\item The degenerate affine hyperplane $-<. - p_0, v>_{1,n-1} = s$ is the translation of the hyperplane directed by the orthogonal of $v$ going through $p_0$ by the vector $sv_0$.
\item Two points $(v,s)$ and $(v,s')$ of $\mathcal{S}(v_0) \times \R$ on the same vertical line correspond to \emph{parallel} degenerate affine hyperplanes; the hyperplane $(v,s)$ is in the chronological future of the hyperplane $(v,s')$ if and only if $s > s'$.
\item A section of the cylinder $\mathcal{S}(v_0) \times \R$ corresponds to a point of $\M^n$. More precisely, the intersection of all the degenerate affine hyperplanes defined by the points of this section is reduced to a point of $\M^n$.
\end{enumerate}

\subsection{Intersections of conformal spheres with an affine charts}


We describe here the intersection of a conformal sphere with an affine chart $M(\mathrm{x})$ of $Ein_{1,n-1}$.

\begin{lemma}\label{lemma: euclidean hyperplanes in an affine chart}
The intersection of the affine chart $M(\mathrm{x})$ with a conformal $(k - 1)$-sphere of $Ein_{1,n-1}$ going through $\mathrm{x}$ is a spacelike $(k - 1)$-plane of $M(\mathrm{x})$.
\end{lemma}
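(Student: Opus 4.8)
The plan is to work with the Klein model directly. Pick representatives $x \in \R^{2,n}$ of $\mathrm{x}$ and a Lorentzian $(k+1)$-plane $W \subset \R^{2,n}$ whose projectivization cuts out (one component of) the given conformal $(k-1)$-sphere $\Sigma$; the hypothesis that $\Sigma$ passes through $\mathrm{x}$ means $x \in W$. Recall from Lemma~\ref{lemma: affine chart} that the affine chart $M(\mathrm{x})$ carries an affine structure modeled on $x^\perp/\vect(x)$, via the map $f_x$ sending a pair of points with representatives normalized by $<\cdot,x>_{2,n} = -1/2$ to the class of their difference. So the natural candidate for the ``plane'' $\Sigma \cap M(\mathrm{x})$ is the image in $x^\perp/\vect(x)$ of the subspace $(W \cap x^\perp)$, and I would first check that this image is a genuine affine subspace of dimension $k-1$ and then compute the signature of the induced quadratic form on it.

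First I would set up the linear algebra: since $x \in W$ and $W$ is Lorentzian (nondegenerate of signature $(1,k)$, say, so that its projectivization meets $Ein$ in a conformal $(k-1)$-sphere), $\vect(x)$ is an isotropic line inside $W$, hence $W \cap x^\perp$ is a hyperplane of $W$ containing $\vect(x)$, of dimension $k$, and the quotient $(W \cap x^\perp)/\vect(x)$ has dimension $k-1$ and sits naturally inside $x^\perp/\vect(x)$ as a $(k-1)$-dimensional linear subspace. Then I would identify $\Sigma \cap M(\mathrm{x})$ with the affine subspace of $M(\mathrm{x})$ directed by this subspace: a point $\mathrm{y} = [y] \in \Sigma$ with $<y,x>_{2,n} = -1/2$ lies in $M(\mathrm{x})$ automatically, and two such points differ (in the $f_x$-coordinates) by an element of $(W \cap x^\perp)/\vect(x)$; conversely every point of that affine subspace is hit. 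This is the routine verification; the bookkeeping is that normalizing $<y,x>_{2,n} = -1/2$ forces $y \in x^\perp$ modulo the part along a fixed ``origin'' representative.

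The substantive point — and the step I expect to be the main obstacle — is the \emph{signature} computation: I must show the induced quadratic form on $(W \cap x^\perp)/\vect(x)$ is \emph{positive definite}, i.e. the plane is spacelike. The idea is that $W$ has signature $(1,k)$ (one negative direction, matching the single timelike direction of the conformal sphere's ambient $\R^{1,k}$), and the one negative direction of $q_{2,n}|_W$ must ``leak into'' the isotropic line $\vect(x)$: more precisely, since $\vect(x)$ is isotropic in the Lorentzian space $W$, the hyperplane $x^\perp \cap W$ is degenerate with radical exactly $\vect(x)$, and the induced form on the quotient $(x^\perp \cap W)/\vect(x)$ is nondegenerate of signature $(0, k-1)$ — all the negativity of $W$ having been absorbed by pairing $x$ with a complementary isotropic vector in $W$. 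This is the standard fact that in a Lorentzian space, the screen space of a lightlike hyperplane is Riemannian; I would cite or quickly reprove it. Combined with the preceding paragraphs, this shows $\Sigma \cap M(\mathrm{x})$ is a spacelike $(k-1)$-plane, completing the proof. A quick sanity check is the extreme case $k = 1$, where $W$ is a timelike plane through $x$, $x^\perp \cap W = \vect(x)$, the quotient is $0$, and the ``plane'' is a single point of $M(\mathrm{x})$ — consistent, since a conformal $0$-sphere is a pair of points and one of them is $\mathrm{x}$ itself.
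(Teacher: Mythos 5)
Your proposal is correct and follows essentially the same route as the paper: realize the sphere as (a component of) the projectivization of a Lorentzian $(k+1)$-plane $P$ containing $\vect(x)$, use $f_x$ to identify the intersection with an affine subspace directed by $(P\cap x^\perp)/\vect(x)$, and observe that this screen space inherits a positive definite form because the negative direction of $q_{2,n}|_P$ is absorbed by the isotropic line $\vect(x)$. The paper compresses the affine-structure verification into ``it is easy to check''; you spell it out, but the argument is the same.
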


\begin{proof}
Let $\mathrm{S}$ be a conformal $(k - 1)$-sphere going through $\mathrm{x}$. It is the intersection of $Ein_{1,n-1}$ with the projectivization of a Lorentzian $(k + 1)$-plane $P$ of $\R^{2,n}$ containing $\vect(x)$ where $x \in \R^{2,n}$ is a representant of $\mathrm{x}$. It is easy to check that the restriction of $f_x$ to $\mathrm{S} \cap M(\mathrm{x})$ defines an affine structure with direction $x^{\perp_P} / \vect(x)$ where $x^{\perp_P}$ denotes the orthogonal of $x$ in $P$. Since $P$ is Lorentzian, the restriction  of $q_{2,n}$ to $P$ induces a positive definite quadratic form on $x^{\perp_P} / \vect(x)$. The lemma follows.
\end{proof}

\begin{lemma}\label{lemma: hyperboloid in an affine chart}
The intersection of the affine chart $M(\mathrm{x})$ with a conformal $(k-1)$-sphere of $Ein_{1,n-1}$ avoiding $\mathrm{x}$ is one sheet of a two-sheeted hyperboloid of dimension $(k-1)$ in $M(\mathrm{x})$.
\end{lemma}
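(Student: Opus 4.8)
The plan is to mirror the proof of Lemma \ref{lemma: euclidean hyperplanes in an affine chart}, replacing the case analysis ``$\mathrm{S}$ goes through $\mathrm{x}$'' by ``$\mathrm{S}$ avoids $\mathrm{x}$''. Let $\mathrm{S}$ be a conformal $(k-1)$-sphere avoiding $\mathrm{x}$, realized as a connected component of the intersection of $Ein_{1,n-1}$ with the projectivization of a Lorentzian $(k+1)$-plane $P \subset \R^{2,n}$, and let $x \in \R^{2,n}$ be a representant of $\mathrm{x}$. Since $\mathrm{x} \notin \mathrm{S}$, the plane $P$ does not contain $\vect(x)$; more precisely, the fact that $\mathrm{x}$ lies in no lightcone of a point of $\mathrm{S}$ (equivalently, $\mathrm{x} \in M(\mathrm{x})$ meets $\mathrm{S} \cap M(\mathrm{x})$ in the affine-chart picture) forces $x^\perp$ to be transverse to $P$, so $P \cap x^\perp$ is a hyperplane of $P$, i.e. a $k$-plane.

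First I would set up the affine picture: by Lemma \ref{lemma: affine chart}, $f_x$ identifies $M(\mathrm{x})$ with the affine space on $x^\perp/\vect(x)$, which carries the induced quadratic form of signature $(1,n-1)$. I claim $\mathrm{S} \cap M(\mathrm{x})$ is, under $f_x$, the set of $[y]$ with $y \in P$, $q_{2,n}(y) = 0$, and $\langle y, x\rangle_{2,n} = -1/2$ (the normalization defining $f_x$). Writing $y = y_0 + t\,x$ with $y_0 \in$ a fixed complement, the isotropy condition $q_{2,n}(y) = 0$ becomes a quadratic equation in $y_0$, while the normalization $\langle y,x\rangle_{2,n} = -1/2$ fixes the component of $y_0$ along a vector pairing nontrivially with $x$; substituting, the locus becomes $\{q_P'(w) = c\}$ for $w$ ranging over a complement of $\vect(x)$ inside $P \cap x^\perp$ (a $(k-1)$-dimensional space) and $q_P'$ the restriction of $q_{2,n}$. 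The key signature computation: since $P$ is Lorentzian of type $(1,k)$ and we have cut down by $x^\perp$ and then quotiented $\vect(x)$ out — but here, unlike the previous lemma, $\vect(x) \not\subset P$, so the relevant $(k-1)$-space inherits a form that is \emph{negative} definite, and $c$ has the appropriate sign, yielding one sheet of a two-sheeted hyperboloid $\{-|w|^2 = c\}$, $c>0$. The choice of connected component of the projectivized intersection picks out exactly one of the two sheets.

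I would organize the write-up as: (i) transversality of $P$ and $x^\perp$ from $\mathrm{x} \notin \mathrm{S}$; (ii) explicit parametrization of $\mathrm{S} \cap M(\mathrm{x})$ via $f_x$; (iii) reduction to a level set of a quadratic form on a $(k-1)$-dimensional space; (iv) the signature argument showing this form is definite with the sign making the level set a hyperboloid sheet; (v) matching connected components. The main obstacle is step (iv): carefully tracking how the signature $(1,k)$ of $P$ transforms after intersecting with $x^\perp$ (a degenerate or nondegenerate hyperplane of $P$ depending on whether $x|_P$ is isotropic) and after the affine normalization, to be sure one lands on the ``$-|w|^2 = \text{positive}$'' side rather than a one-sheeted hyperboloid or a cone. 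A clean way to handle this is to choose a representant $x$ and a basis of $P$ adapted to the pairing $\langle \cdot, x\rangle_{2,n}|_P$, splitting off a hyperbolic plane so that the complementary $(k-1)$-space is manifestly negative definite; then the computation in step (iii) is forced and the sign of the constant is automatic from $q_{2,n}(x) = 0$ together with the normalization $\langle y, x\rangle_{2,n} = -1/2 \neq 0$.
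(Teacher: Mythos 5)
Your overall strategy---computing $\mathrm{S}\cap M(\mathrm{x})$ directly through $f_x$ as the intersection of the isotropic cone of $P$ with the affine hyperplane $\{y\in P:\ \langle y,x\rangle_{2,n}=-1/2\}$---is viable and genuinely different from the paper's, which instead introduces the auxiliary subspace $Q=P\oplus\vect(x)$, identifies the Minkowski subspace of $M(\mathrm{x})$ that $Q$ cuts out, and exhibits $\mathrm{S}\cap M(\mathrm{x})$ as the set of points at unit timelike distance from an explicit centre $\mathrm{z}_0=[y-x']$. But your execution fails at exactly the step you flag as the main obstacle, step (iv), and the failure is not mere bookkeeping. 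Writing $x_P$ for the orthogonal projection of $x$ on $P$, the quadric lives in the $k$-dimensional direction space $P\cap x^\perp$ (the orthogonal of $x_P$ in $P$), which, when $x_P$ is spacelike, has signature $(1,k-1)$: it is \emph{Lorentzian}, not negative definite, and it is $k$-dimensional, not $(k-1)$-dimensional. (A negative definite subspace of $\R^{2,n}$ has dimension at most $2$, so your claim cannot hold for $k\geq 4$ even in principle.) Completing the square, the locus is $\{q_{2,n}(w-w_0)=-1/(4\,q_{2,n}(x_P))\}$ with $q_{2,n}(x_P)>0$, i.e.\ a \emph{negative} level set of a form of signature $(1,k-1)$---that is what a two-sheeted hyperboloid is, its two sheets being the two components of the timelike cone. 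The object you write down, a level set $\{-|w|^2=c\}$ of a definite form, is empty for $c>0$ and a round sphere for $c<0$; it is never a hyperboloid sheet, so step (iv) as proposed cannot yield the statement.

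Relatedly, step (i) gets the relevant transversality backwards. The condition ``$\mathrm{x}$ lies in no lightcone of a point of $\mathrm{S}$'' neither follows from $\mathrm{x}\notin\mathrm{S}$ nor helps: it says $\mathrm{S}$ is disjoint from the lightcone of $\mathrm{x}$, which is precisely the configuration ($x_P$ timelike) in which $\mathrm{S}$ lies entirely in $M(\mathrm{x})$ or $M(-\mathrm{x})$ and its trace in the chart is a \emph{compact} round sphere inside a spacelike plane, not a hyperboloid sheet. What the argument needs is the opposite: $\mathrm{S}$ must meet the lightcone of $\mathrm{x}$, equivalently $x_P$ must be spacelike, so that the trace is unbounded. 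The sign of $q_{2,n}(x_P)$ is the single fact on which the lemma turns; the paper secures it (itself rather tersely) by asserting that $Q=P\oplus\vect(x)$ carries two negative directions, so that the orthogonal of $P$ in $Q$ is a \emph{timelike} line, which is exactly what makes $s(x')=y-x'$ isotropic and the centre $\mathrm{z}_0$ well defined. Any write-up along your lines must isolate and justify this point; without it you cannot distinguish the hyperbolic case from the spherical and parabolic ones.
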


\begin{proof}
Let $S$ be a conformal $(k - 1)$-sphere avoiding $\mathrm{x}$. It is the intersection of $Ein_{1,n-1}$ with the projectivization of a subspace of a Lorentzian $(k+1)$-plane $P$ of $\R^{2,n}$ which does not contain $\vect(x)$ where $x \in \R^{2,n}$ is a representant of $\mathrm{x}$. 

Set $Q = P \oplus \vect(x)$. Notice that this sum is not orthogonal. Indeed, otherwise $S$ would be in the lightcone of $\mathrm{x}$, hence disjoint from $M(\mathrm{x})$. It follows that $Q$ is a subspace of $\R^{2,n}$ of type $(2,k+1)$. The orthogonal of $P$ in $Q$ is then a timelike line $\vect(y)$ where $y \in \R^{2,n}$ denotes a unit vector director. Let $x' \in \vect(x)$ such that $<x',y>_{2,n} = -1/2$. We call $s(x')$ the symmetric of $x'$ with respect to $\vect(y)$, i.e. $s(x') = y - x'$. We have $<s(x'), s(x')>_{2,n} = 0$ and $<s(x'), x'>_{2,n} = -1/2$. Hence, $[s(x')]$ is a point of the intersection of $M(\mathrm{x})$ and the projectivization of $Q$. A similar proof than that of Lemma \ref{lemma: euclidean hyperplanes in an affine chart} shows that this last intersection is a Lorentzian $(k+2)$-plane, denoted $\mathbb{M}^{1,k+1}$. 

Set $\mathrm{z}_0 := [s(x')] \in \mathbb{M}^{1,k+1}$. We show that the intersection $M(\mathrm{x}) \cap S$ is exactly the set of points $\mathrm{z} \in \M^{1,k+1}$ such that the vector $\mathrm{z} - \mathrm{z}_0$ is a unit timelike vector. Let $\mathrm{z} \in \mathbb{M}^{1,k+1}$ and let $z \in Q$ be a representant of $\mathrm{z}$ such that $<z,x>_{2,n} = -1/2$. By definition, the norm of the vector $\mathrm{z} - \mathrm{z}_0$ is given by
\begin{align*}
<z - s(x'), z - s(x')>_{2,n} = - 2 <z, s(x')>_{2,n} \\
                             = - 2 <z, y - x'>_{2,n} \\
                             = - 2 <z,y>_{2,n} - 1.
\end{align*}
Hence, the norm of $\mathrm{z} - \mathrm{z}_0$ equals $-1$ if and only if $<z,y>_{2,n} = 0$, which is that $z \in P$ or, equivalently, that $\mathrm{z} \in S \cap M(\mathrm{x})$.
\end{proof}

\section{Regular domains} \label{sec: regular domains}



Penrose boundary is closely related to the notion of \emph{regular domains} of Minkowski spacetime. 

\begin{definition} \label{def: regular domains}
\emph{A future-regular domain} is a non-empty convex open domain of Minkowski spacetime which can be described as the intersection of strict future half-spaces bounded by a degenerate hyperplane.
\end{definition}

\begin{remark}
We define similarly past-regular domains by reversing the time-orientation.
\end{remark}

Usually, the definition of regular domains excludes half-spaces bounded by a degenerate affine hyperplane and the Minkowski spacetime itself (see e.g. \cite[def. 4.5]{barbot} or \cite[def. 4.1]{bonsante2005flat}). However, for the sake of convenience, we choose to include them in the definition. We distinguish these \mbox{examples} from the regular domains defined by at least two non-parallel degenerate affine hyperplane by using the term \emph{proper}:

\begin{definition}
A future-regular (or past-regular) domain of Minkowski spacetime defined by at least two non-parallel degenerate hyperplanes is said to be \emph{proper}.
\end{definition}

Let $\Lambda \subset \mathcal{J}^+(p)$. It naturally defines a convex domain in both affine charts $\Mink_0(p)$ and $\Mink_+(p)$: 
\begin{itemize}
\item the set $\Omega^+(\Lambda)$ of points of $\Mink_0(p)$ which are not causally related to any point of~$\Lambda$;
\item the set $\Omega^-(\Lambda)$ of points of $\Mink_+(p)$ which are not causally related to any point of $\Lambda$.
\end{itemize}
The set $\Omega^+(\Lambda)$ corresponds to the intersection of the strict future half-spaces of $\Mink_0(p)$ bounded by a degenerate hyperplane of $\Lambda$ and so it is a \emph{future} convex set (see Remark~\ref{remark: Penrose boundary in the universal cover}). Similarly, $\Omega^-(\Lambda)$ is the intersection of the strict past half-spaces of $\Mink_+(p)$ bounded by a degenerate hyperplane of $\Lambda$ and so it is a \emph{past} convex set. Indeed, $\mathcal{J}^+(p) = \mathcal{J}^-(\sigma(p))$ is the past Penrose boundary of the affine chart $\Mink_0(\sigma(p)) = \Mink_+(p)$. Hence, $\Omega^+(\Lambda)$ and $\Omega^-(\Lambda)$ are regular domains if and only if they are non-empty and open. Notice that if $\Lambda$ is closed, then $\Omega^\pm(\Lambda)$ is open (maybe empty). In Section \ref{sec: regular domains and shadows}, we provide a sufficient and necessary condition for $\Omega^\pm(\Lambda)$ to be non-empty.


\begin{example}[Misner domains]
Proper regular domains defined by exactly two non-parallel affine degenerate hyperplanes are distinguishable: their quotients by a suitable discrete subgroup of affine transformations of Minkowski spacetime define a family of globally hyperbolic Cauchy-compact flat spacetimes, called \emph{Misner spacetimes}. This terminology was introduced in \cite[Sec. 3.2]{barbot}, as these spacetimes generalize the two-dimensional case introduced by Misner in \cite{misner1967taub}. In that example, the spacetime is constructed by taking the quotient of a half space in $\R^{1,1}$ bounded by a lightlike straight line, under a boost of $\R^{1,1}$.
\end{example}

\subsection{Characterization of regular domains by shadows} \label{sec: regular domains and shadows}

Let $\Mink_0(p)$ be an affine chart of $\eeu$ and let $\Lambda$ be a closed subset of $\mathcal{J}^+(p)$. Consider the open future-convex subset $\Omega^+(\Lambda)$ of $\Mink_0(p)$ defined as above, i.e.
\begin{align*}
\Omega^+(\Lambda) &:= \Mink_0(p) \backslash J^-(\Lambda).
\end{align*}
We provide a necessary and sufficient condition for $\Omega^+(\Lambda)$ to be non-empty using the notion of \emph{shadows}, introduced by C. Rossi in her thesis (see \cite[Chap. 4]{salveminithesis}). We recall the definition below.

\begin{definition}
Let $M$ be a spacetime and let $A$ be an achronal topological hypersurface of $M$. For every $p \in M$, we call \emph{shadow of $p$ on $A$} the set of points of $A$ which are causally related to $p$. We denote it $O(p,A)$ or simply $O(p)$ when there is no confusion on the subset $A$.
\end{definition}

\begin{example}
Given a point $q \in \Mink_0(p)$, the shadow of $q$ on $\mathcal{J}^+(p)$ is the intersection of the causal future of $q$ with $\mathcal{J}^+(p)$. After identification of $\mathcal{J}^+(p)$ with the cylinder $\mathbb{S}^{n-2} \times \R$, the intersection of the lightcone of $q$ with $\mathcal{J}^+(p)$ corresponds to an ellipse and the shadow of $q$ on $\mathcal{J}^+(p)$ corresponds to the upper region of the cylinder bounded by this ellipsoid (see Figure \ref{fig: shadow}).
\end{example}

Suppose that $\Omega^+(\Lambda)$ is non-empty. The following proposition characterizes the points of $\Omega^+(\Lambda)$ by their shadows on $\mathcal{J}^+(p)$. The shadow of a point $q$ of $\Mink_0(p)$ on $\mathcal{J}^+(p)$ is denoted $O(q)$.

\begin{proposition}\label{prop: characterization of regular domains by shadows}
The regular domain $\Omega^+(\Lambda)$ is the set of points of $\Mink_0(p)$ whose shadows on the Penrose boundary $\mathcal{J}^+(p)$ are disjoint from $\Lambda$. \qed
\end{proposition}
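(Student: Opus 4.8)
The statement is essentially a reformulation of the definition of $\Omega^+(\Lambda)$ in terms of shadows, so the proof should be a short double inclusion. Let me denote $\Omega := \Omega^+(\Lambda) = \Mink_0(p) \setminus J^-(\Lambda)$. First I would unwind what $q \in \Omega$ means: $q$ lies in $\Mink_0(p)$ and $q \notin J^-(y)$ for every $y \in \Lambda$, i.e. no point of $\Lambda$ is in the causal future of $q$. On the other hand, by the definition of the shadow, $O(q) = J^+(q) \cap \mathcal{J}^+(p)$ restricted suitably — more precisely $O(q)$ is the set of points of $\mathcal{J}^+(p)$ causally related to $q$. Here I need the remark, already recorded in Remark~\ref{remark: Penrose boundary in the universal cover}, that $I^+(q)$ is disjoint from $\Mink_0(p)$, and more to the point that any $y \in \mathcal{J}^+(p) = \partial I^+(p)$ which is causally related to $q \in \Mink_0(p)$ must lie in the \emph{future} of $q$ (it cannot lie in its past, since $J^-(q) \subset \Mink_0(p) \cup I^-(p)$ and $\mathcal{J}^+(p)$ is disjoint from both). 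Hence $O(q) = J^+(q) \cap \mathcal{J}^+(p)$, and the condition ``$O(q) \cap \Lambda = \emptyset$'' is exactly ``no point of $\Lambda$ is in $J^+(q)$'', which is exactly ``$q \notin J^-(\Lambda)$''.

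With this observation in hand, both inclusions are immediate. If $q \in \Omega$, then $q \in \Mink_0(p)$ and $q \notin J^-(\Lambda)$, so by the equivalence above $O(q) \cap \Lambda = \emptyset$; thus $q$ belongs to the set on the right-hand side. Conversely, if $q \in \Mink_0(p)$ has $O(q) \cap \Lambda = \emptyset$, then again by the equivalence $q \notin J^-(\Lambda)$, so $q \in \Omega$. This proves the two sets coincide.

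\textbf{Main obstacle.} The only genuine content is the claim that, for $q \in \Mink_0(p)$ and $y \in \mathcal{J}^+(p)$, ``$y$ causally related to $q$'' is equivalent to ``$y \in J^+(q)$'' — i.e. that one never needs to worry about $\mathcal{J}^+(p)$ meeting the causal past of a point of $\Mink_0(p)$. This follows from the causal description of the affine chart: $\Mink_0(p) = \eeu \setminus (J^+(p) \cup J^-(p))$ sits causally ``between'' $I^-(p)$ and $I^+(p)$, and $\mathcal{J}^+(p)$ is the regular part of $\partial I^+(p)$, which lies to the future of everything in $\Mink_0(p)$ it touches (indeed $J^-(\mathcal{J}^+(p)) \cap \Mink_0(p)$ is all of $\Mink_0(p)$ by transitivity of $J^-$ along photons of $\mathcal{J}^+(p)$, whereas $J^+(\mathcal{J}^+(p))$ is disjoint from $\Mink_0(p)$). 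Once this is granted, the proposition is a tautological rephrasing; I would state it cleanly as a one-line lemma or simply fold it into the body of the proof, citing Remark~\ref{remark: Penrose boundary in the universal cover} for the disjointness of $I^+(q)$ from $\Mink_0(p)$ and the symmetric fact that $J^-(q) \subset \Mink_0(p) \cup I^-(p) \cup J^-(\sigma^{-1}(p))$ never meets $\mathcal{J}^+(p)$.
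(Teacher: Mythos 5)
Your proof is correct and follows essentially the same route as the paper's: a short double inclusion unwinding the definitions of $\Omega^+(\Lambda)$ and of the shadow. The only difference is that you make explicit the step the paper leaves implicit — namely that a point of $\mathcal{J}^+(p)$ causally related to a point $q \in \Mink_0(p)$ must lie in $J^+(q)$ (since $J^-(q)$ cannot meet $J^+(p) \supset \mathcal{J}^+(p)$) — which is a welcome clarification rather than a departure.
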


\begin{proof}
Let $q \in \Omega^+(\Lambda)$. Suppose there exists $r \in O(q) \cap \Lambda$. Then, $r \in J^+(q)$, equivalently $q \in J^-(r)$ with $r \in \Lambda$. Contradiction. Thus, $O(q) \cap \Lambda = \emptyset$.

Let $q \in \Mink_0(p)$ such that $O(q) \cap \Lambda = \emptyset$. Suppose that $q \not \in \Omega^+(\Lambda)$. Then, there exists $r \in \Lambda$ such that $q \in J^-(r)$. Equivalently, $r \in J^+(q)$. Hence, $r \in O(q) \cap \Lambda$. Contradiction. Thus, $q \in \Omega^+(\Lambda)$.
\end{proof}

\begin{figure}[h!]
\centering
\includegraphics[scale=1]{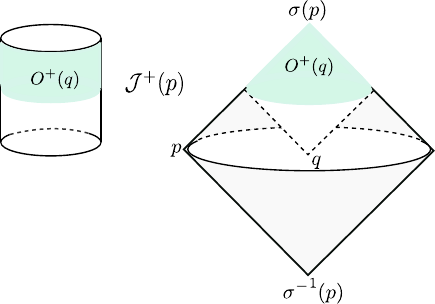}
\caption{Shadow of a point $q \in \Mink_0(p)$ on the Penrose boundary $\mathcal{J}^+(p)$ in dimension~$3$.}
\label{fig: shadow}
\end{figure}

An immediate consequence of Proposition \ref{prop: characterization of regular domains by shadows} is that it provides a criterion for $\Omega^+(\Lambda)$ to be non-empty, in other words to be a future-regular domain:

\begin{corollary} \label{cor: criterion regular}
The domain $\Omega^+(\Lambda)$ is non-empty if and only if there exists a point in $\Mink_0(p)$ whose shadow on $\mathcal{J}^+(p)$ is disjoint from $\Lambda$.
\end{corollary}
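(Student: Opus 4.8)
The plan is to observe that this corollary is an immediate logical consequence of Proposition~\ref{prop: characterization of regular domains by shadows} together with the obvious fact that $\Omega^+(\Lambda)$ being non-empty means exactly that it contains at least one point. So the proof is essentially a one-line deduction, and the only thing to do carefully is to make sure the quantifiers are threaded correctly.

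First I would recall that Proposition~\ref{prop: characterization of regular domains by shadows} identifies $\Omega^+(\Lambda)$ with the set $\{q \in \Mink_0(p) : O(q) \cap \Lambda = \emptyset\}$. Hence $\Omega^+(\Lambda) \neq \emptyset$ if and only if this latter set is non-empty, which is precisely the assertion that there exists a point $q \in \Mink_0(p)$ whose shadow $O(q)$ on $\mathcal{J}^+(p)$ is disjoint from $\Lambda$. That is the entire content. One subtlety worth flagging: Proposition~\ref{prop: characterization of regular domains by shadows} as stated presupposes $\Omega^+(\Lambda) \neq \emptyset$ (``Suppose that $\Omega^+(\Lambda)$ is non-empty''). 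However, its proof does not actually use that hypothesis — the two inclusions established there, namely $q \in \Omega^+(\Lambda) \Rightarrow O(q) \cap \Lambda = \emptyset$ and $O(q) \cap \Lambda = \emptyset \Rightarrow q \in \Omega^+(\Lambda)$, hold unconditionally for any $q \in \Mink_0(p)$ directly from the definition $\Omega^+(\Lambda) = \Mink_0(p) \setminus J^-(\Lambda)$. So the set equality $\Omega^+(\Lambda) = \{q \in \Mink_0(p) : O(q)\cap\Lambda = \emptyset\}$ is valid even when the set is empty, and the corollary follows.

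There is no real obstacle here; the ``hard part'', if any, is purely cosmetic: deciding whether to re-derive the unconditional set equality explicitly (to avoid relying on a proposition whose stated hypothesis is the very thing we want to characterize) or simply to cite the proposition and note that its proof is hypothesis-free. I would take the former route for cleanliness. Concretely, the proof reads as follows.

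\begin{proof}
By definition, $\Omega^+(\Lambda) = \Mink_0(p) \setminus J^-(\Lambda)$. For any $q \in \Mink_0(p)$, the two implications established in the proof of Proposition~\ref{prop: characterization of regular domains by shadows} show — without using that $\Omega^+(\Lambda)$ is non-empty — that $q \in \Omega^+(\Lambda)$ if and only if $O(q) \cap \Lambda = \emptyset$. Therefore
\[
\Omega^+(\Lambda) = \{q \in \Mink_0(p) : O(q) \cap \Lambda = \emptyset\},
\]
and this set is non-empty precisely when there exists a point of $\Mink_0(p)$ whose shadow on $\mathcal{J}^+(p)$ is disjoint from $\Lambda$.
\end{proof}
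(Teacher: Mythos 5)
Your proof is correct and follows the same route as the paper, which simply presents the corollary as an immediate consequence of Proposition~\ref{prop: characterization of regular domains by shadows}. Your extra observation that the proposition's proof does not actually use the non-emptiness hypothesis (so the set equality holds unconditionally and there is no circularity) is a worthwhile clarification, but the underlying argument is identical.
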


After identification of $\mathcal{J}^+(p)$ with $\mathbb{S}^{n-2} \times \R$, this criterion is equivalent to say that there exists a real scalar $C$ such that for every element $(u,s)$ of $\Lambda$, the second component is less than $C$ (see \cite[Proposition 4.10]{barbot}).

\begin{remark}
Proposition \ref{prop: characterization of regular domains by shadows} still holds for the domain $\Omega^-(\Lambda)$ of the affine chart $\Mink_+(p)$. The reformulation of the criterion given by Corollary \ref{cor: criterion regular} in a parametrization $\mathbb{S}^{n-2} \times \R$ is similar to the one stated above after switching \emph{less than} by \emph{greater than}.
\end{remark}

\section{GH conformally flat spacetimes} \label{sec: conformally flat spacetimes}

\subsection{Conformally flat Lorentzian structures}

A spacetime is said to be \emph{conformally flat} if it is locally conformal to Minkowski spacetime. In dimension $n \geq 3$, by Liouville theorem (see Theorem \ref{Liouville theorem}), a spacetime is conformally flat if and only if it is equipped with a $(G,X)$-structure where $X = \eeu$ and \mbox{$G = \Conf(\eeu)$} is its group of conformal transformation.
Therefore, a conformally flat Lorentzian structure on a manifold $M$ of dimension $n \geq 3$ is encoded by the data of a development pair $(D,\rho)$ where $D: \tilde{M} \to \eeu$ is a local diffeomorphism called \emph{developing map} and $\rho: \pi_1(M) \to \Conf(\eeu)$ is the associated \emph{holonomy morphism}. We direct the reader not familiar with $(G,X)$-structures to \cite[Chapter 5]{goldman}. 

Let us make some comments and introduce some vocabulary:
\begin{enumerate}
\item In general, a developing map is only a local diffeomorphism, neither injective nor surjective. When $D$ is a global diffeomorphism, we say that the conformally flat Lorentzian structure on $M$ is \emph{complete}. 
\item A conformally flat spacetime $M$ is said to be \emph{developable} if any developing map descends to the quotient, giving a local diffeomorphism from $M$ to $\eeu$.
\item Two points $p, q$ of a developable conformally flat spacetime $M$ are said to be \emph{conjugate} if their images under a developing map are conjugate in $\eeu$. 
\item C. Rossi proved in \cite[Theorem 10]{Salvemini2013Maximal} that the existence of conjugate points in a GH developable conformally flat spacetime is a sufficient condition of completeness. We provide a shorter and simpler proof of this result in Section \ref{sec: canonical neighborhoods}.
\end{enumerate}

\subsection{GH developable conformally flat spacetimes}

Let $M$ be a GH developable conformally flat spacetime and let $D: M \to \eeu$ be a developing map. We suppose that $M$ is not complete. 

In this section, we give a criterion for a \textbf{causally convex} open subset $U$ of $M$ to be a domain of injectivity of the developing map $D$. First, we recall the definition of the causal convexity.

\begin{definition}
A subset $U$ of a spacetime $V$ is said to be \emph{causally convex} if any causal curve of $V$ connecting two points $p,q $ of $U$ is contained in $U$; equivalently, if the diamond $J(p,q)$ is contained in $U$. 
\end{definition}

An important property of causally convex open subsets is that \emph{they are globally hyperbolic if the ambient spacetime is globally hyperbolic}. Indeed, the diamonds of such a subset $U$ are exactly the diamonds of the ambient spacetime contained in $U$, and thus they are compact.

\begin{lemma}[Criterion of injectivity] \label{lemma: nc injectivity}
Let $U$ be a causally convex open subset of $M$ and let $\Sigma$ be a Cauchy hypersurface of $U$ such that its image $D(\Sigma)$ is achronal in $\eeu$.\\ Then, if the developing map $D$ is injective on $\Sigma$, it is injective on $U$. 
\end{lemma}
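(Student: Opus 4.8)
The strategy is to reduce injectivity on all of $U$ to injectivity on a Cauchy hypersurface $\Sigma$ by exploiting the causal structure: every point of $U$ lies in a diamond whose vertices can be controlled, and on diamonds we already know $D$ is injective by Lemma \ref{lemma intro: diamonds}. First I would recall that $U$, being causally convex in the globally hyperbolic $M$, is itself globally hyperbolic with Cauchy hypersurface $\Sigma$; hence $U = I^-(\Sigma,U) \cup \Sigma \cup I^+(\Sigma,U)$, and every point $x \in U$ lies on an inextensible causal curve in $U$ meeting $\Sigma$ at a unique point. I will treat the future $I^+(\Sigma,U)$ (the past is symmetric, the hypersurface itself being handled by hypothesis).

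The key point is to compare developed images. Suppose $D(x) = D(y)$ for $x,y \in U$. Pick the inextensible causal curves through $x$ and through $y$ in $U$; they meet $\Sigma$ at points $p$ and $p'$ respectively. I want to show $p = p'$, so that $D(p) = D(p')$, and then conclude $x=y$ by injectivity on the diamond joining $p$ (or $p'$) to the common point. The mechanism is the classical \emph{Lemme des assiettes} / analytic-continuation argument combined with Lemma \ref{lemma intro: diamonds}: along a causal curve from $p$ up to $x$, the developing map sends the diamond $J(x,p)$ bijectively onto $J(D(x),D(p))$ in $\eeu$; similarly $J(y,p')$ maps onto $J(D(y),D(p'))$. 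Since $D(x)=D(y)$, these two diamonds in $\eeu$ share the common vertex $D(x)=D(y)$, and one runs the continuation back down to $\Sigma$: the images $D(p),D(p')$ must both lie on $D(\Sigma)$, which is achronal, and both are connected to $D(x)$ by a past-directed causal curve obtained as the $D$-image of the respective curves in $U$. The achronality of $D(\Sigma)$ forces a unique intersection point of the past causal cone of $D(x)$ with $D(\Sigma)$, hence $D(p)=D(p')$; then injectivity of $D$ on $\Sigma$ gives $p = p'$. Now $x,y$ both lie in the diamond $J(x,p) = J(y,p)$ of $M$ (using $x,y \in J^+(p)$ and that they map to the same point), and Lemma \ref{lemma intro: diamonds} applied to that diamond yields $x=y$.

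There is a subtlety I would need to be careful about: a priori $x$ and $y$ need not lie on the same causal curve, so the diamonds $J(x,p)$ and $J(y,p')$ need not coincide even after we know $p=p'$; what actually happens is that once $p = p'$, both $x$ and $y$ lie in $J^+(p) \cap U$, and the diamond $J^+(p) \cap J^-(r)$ for any $r \in I^+(x) \cap I^+(y) \cap U$ (which exists since $U$ is open and $I^+(x) \cap I^+(y) \ne \emptyset$ as $D(x)=D(y)$ forces their futures to developing-coincide locally) contains both $x$ and $y$; applying Lemma \ref{lemma intro: diamonds} to this diamond and using that $D$ maps it injectively gives $x = y$.

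\textbf{Main obstacle.} The delicate step is controlling that the $D$-image of a past-inextensible-in-$U$ causal curve from $x$ really does reach $D(\Sigma)$ at a point determined by $D(x)$ alone, i.e. transferring the achronality of $D(\Sigma)$ back to a statement about the curves in $U$; this requires knowing that $D$ restricted to the relevant diamond (which reaches down to $\Sigma$) is not merely injective but a homeomorphism onto the diamond in $\eeu$ spanned by the images, so that the image curve is again causal and inextensible down to $D(\Sigma)$ — precisely the content of Lemma \ref{lemma intro: diamonds}, which is why that lemma is the linchpin. Formalizing the ``shared vertex'' continuation without circularity (one must not assume global injectivity to prove it) is where I would spend the most care, presumably by an open-closed argument on $\Sigma$: the set of points of $\Sigma$ whose developed image equals a prescribed point's is open and closed in the connected hypersurface, hence everything or nothing.
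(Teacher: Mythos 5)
Your reduction to a statement on $\Sigma$ is the right instinct, but the step that carries all the weight is wrong. You take \emph{arbitrary} inextensible causal curves through $x$ and $y$, let them hit $\Sigma$ at $p$ and $p'$, and claim that achronality of $D(\Sigma)$ forces $D(p)=D(p')$ because ``the past causal cone of $D(x)$ meets $D(\Sigma)$ in a unique point.'' That is false: the causal past $J^-$ of a point meets an achronal hypersurface in a large set in general (in Minkowski space the past lightcone of a point already meets a spacelike hyperplane in an entire $(n-2)$-sphere). Achronality only gives uniqueness of intersection with a single \emph{timelike curve}. So with arbitrary causal curves you get two points $D(p), D(p')$ of $J^-(D(x))\cap D(\Sigma)$ and no way to identify them. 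The paper's proof is engineered precisely to avoid this: it fixes a spatio-temporal decomposition $\eeu\simeq\mathbb{S}^{n-1}\times\R$, pulls back the vector field $\partial_t$ by $D$ to a timelike field $T$ on $M$, and takes for the curves through $x$ and $y$ the integral curves of $T$. Because $D(x)=D(y)$ and both image curves are integral curves of $\partial_t$, they lie in the \emph{same} vertical timelike line $\Delta$, and $\Delta\cap D(\Sigma)$ is a single point by achronality. Your concluding ``open-closed argument on $\Sigma$'' does not repair this; the choice of curve is the missing idea.

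There is a second gap at the end. Even granting $p=p'$, you need $x$ and $y$ to sit in a common diamond with vertex $p$, and for that you invoke a point $r\in I^+(x)\cap I^+(y)$, justified by ``$D(x)=D(y)$ forces their futures to developing-coincide locally.'' This is not an argument: two distinct points with the same developed image can perfectly well have disjoint chronological futures in $M$ (this is exactly the pathology one is trying to rule out), so the diamond $J^+(p)\cap J^-(r)$ containing both $x$ and $y$ need not exist. In the paper's setup this problem evaporates: once the base points on $\Sigma$ coincide, $x$ and $y$ lie on the \emph{same} integral curve of $T$ (the one through that base point), and injectivity of $D$ along a single causal curve (Lemma \ref{lemma: dev sends causal curve on causal curve}) gives $x=y$ directly, with no appeal to the diamond lemma at all.
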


\begin{proof}
Let $p,q \in U$ such that $D(p) = D(q)$. We consider a spatio-temporal decomposition $\mathbb{S}^{n-1} \times \R$ of $\eeu$. We call $T$ the vector field on $M$ defined as the pull-back by $D$ of the vector field $\partial_t$ on $\eeu \simeq \mathbb{S}^{n-1} \times \R$. The flow of $T$ defines a foliation of $M$ by smooth inextendible timelike curves. Let $\tilde{\gamma}_p$ (resp. $\tilde{\gamma}_q$) be the leaf going through $p$ (resp. $q$). Since $U$ is causally convex, the intersection of $\tilde{\gamma}_p$ (resp. $\tilde{\gamma}_q$) with $U$ is connected, we denote it by $\gamma_p$ (resp. $\gamma_q$). Since $\gamma_p$ (resp. $\gamma_q$) is an inextendible timelike curve of $U$, it meets $\Sigma$ in a unique point $p_0$ (resp. $q_0$).

By definition, the image $D(\gamma_p)$ (resp. $D(\gamma_q)$) is contained in the timelike line of the decomposition $\mathbb{S}^n \times \R$ going through $D(p)$ (resp. $D(q)$). Since $D(p) = D(q)$, the images $D(\gamma_p)$ and $D(\gamma_p)$ are contained in the same timelike line $\Delta$. Thus, $D(p_0)$ and $D(q_0)$ belong to the intersection of $\Delta$ with $D(\Sigma)$. Since $D(\Sigma)$ is achronal in $\eu$, this intersection is reduced to a single point, hence $D(p_0) = D(q_0)$. Since $D$ is injective on $\Sigma$, it follows that $p_0 = q_0$. Therefore, the leaves $\tilde{\gamma}_p$ and $\tilde{\gamma}_q$ coincide. By Lemma \ref{lemma: dev sends causal curve on causal curve}, we deduce that $p = q$. The lemma follows.
\end{proof}

\subsection{Causal completion of a GH developable conformally flat spacetime} \label{sec: causal completion}

\emph{The causal completion of a spacetime $V$} was introduced by Geroch, Kronheimer and Penrose in 1972 (see \cite{Kronheimer}). Roughly speaking, this construction consists in attaching to $V$ ideal points corresponding to the endpoints at infinity of inextensible causal curves. Those in the future form the \emph{future causal boundary} and those in the past form \emph{the past causal boundary}. This is formalized by the notion of \emph{Indecomposable Past sets} (abbrev. IPs) and, by symmetry, the notion of \emph{Indecomposable Future sets} (abbrev. IFs) of $V$. In \cite{smai2023causal}, we provide a detailed exposition of these concepts in the context of \emph{GH} spacetimes. \\

In \cite{Kronheimer}, the authors showed that the IPs split into two classes: 
\begin{enumerate}
\item the \emph{proper} IPs (abbrev. PIPs), consisting in the chronological pasts of points;
\item and the \emph{terminal} IPs (abbrev. TIPs), consisting in the chronological pasts of future-inextensible causal curves.
\end{enumerate}
The set of PIPs identifies naturally to $V$ while the set of TIPs form the future causal boundary of $V$. 

By symmetry, the PIFs split also into two classes, the PIFs and the TIFs, defined similarly after reversing the time-orientation. The set of PIFs identifies naturally to $V$ while the set of TIFs defines the past causal boundary of $V$. 

The causal completion of $V$ is defined as the union of IPs and IFs, quotiented by the equivalence relation which identifies $I^-(p)$ to $I^+(p)$ for every $p \in V$. It turns out that the causal completion of $V$ is a topological space in which $V$ embeds as a dense open subset. In \cite[Sec. 3.2, Prop. 3.3]{smai2023causal}, we provide a simpler proof of this fact when $V$ is globally hyperbolic. 

\begin{example}
The future causal boundary of Minkowski spacetime, realized as an affine chart $\Mink_0(p)$ of $\eeu$, is the union of $\mathcal{J}^+(p)$ and $\sigma(p)$ and its past causal boundary is the union of $\mathcal{J}^-(p)$ and $\sigma^{-1}(p)$ (see \cite[Example 3.2]{smai2023causal}).
\end{example}

\begin{example}
The description of inextensible causal curves of $\eeu$ shows that the only TIP and the only TIF of $\eeu$ is $\eeu$ itself. Hence, each of the future and the past causal boundary of $\eeu$ is reduced to a single point.
\end{example}

In \cite[Sec. 4]{smai2023causal}, we proved that when $V$ is a GH developable conformally flat spacetime without conjugate points, its causal completion is a topological manifold with boundary homeomorphic to $S \times [0,1]$, where $S$ is a Cauchy hypersurface of $V$. The assumption of no conjugate points ensures that $V$ is not $\eeu$ for which this result obviously does not hold.

\subsubsection*{Extension of the developing map to the causal boundary}

Let $M$ be a developable GH conformally flat spacetime and let $D: M \to \eeu$ be a developing map. We suppose that $M$ is not complete.

We construct a natural extension of $D$ to \emph{the causal completion of $M$}: \\ 

The developing map $D$ induces naturally a map $\hat{D}$ which associate to every IP $U$ of $M$ the IP $\hat{D}(U)$ of $\eeu$ defined by:
\begin{align*}
\hat{D}(U) &:= I^-(D(U)). 
\end{align*}

\begin{lemma}\label{lemma: image of a PIP}
The image under $\hat{D}$ of a PIP of $M$ is a PIP of $\eeu$. More precisely, for every $p \in M$, we have
\begin{align*}
\hat{D}(I^-(p)) &= I^-(D(p)).
\end{align*}
\end{lemma}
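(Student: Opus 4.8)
The plan is to unwind the definition of $\hat D$ and show that for a PIP $I^-(p)$, taking the chronological past of $D(I^-(p))$ in $\eeu$ adds nothing new beyond $I^-(D(p))$. By definition,
\[
\hat D(I^-(p)) = I^-\bigl(D(I^-(p))\bigr).
\]
First I would prove the inclusion $I^-(D(p)) \subseteq \hat D(I^-(p))$. This is the easy direction: if $q \in I^-(D(p))$, I want to realize $q$ as lying in the chronological past of some point of $D(I^-(p))$. Since $D$ is a local diffeomorphism and $p$ has a neighborhood on which $D$ restricts to a conformal chart into $\eeu$, one can find a point $p'$ strictly in the past of $p$ inside that chart, close enough to $p$ so that $D(p') \in I^+(q)$ as well (using that $I^+(q)$ is open and $D(p') \to D(p) \in I^+(q)$ as $p' \to p$ along a past-directed timelike curve). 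Then $p' \in I^-(p)$, so $D(p') \in D(I^-(p))$, and $q \in I^-(D(p'))$, giving $q \in I^-(D(I^-(p))) = \hat D(I^-(p))$.

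For the reverse inclusion $\hat D(I^-(p)) \subseteq I^-(D(p))$, take $q \in I^-(D(p'))$ for some $p' \in I^-(p)$. I need $q \in I^-(D(p))$. The key point is that $D$ sends causal (here chronological) curves to causal curves — this is exactly \emph{Lemma \ref{lemma: dev sends causal curve on causal curve}}, which the excerpt refers to. Since $p' \in I^-(p)$, there is a past-directed timelike curve in $M$ from $p$ to $p'$; its image under $D$ is a past-directed timelike curve in $\eeu$ from $D(p)$ to $D(p')$. Concatenating with a past-directed timelike curve from $D(p')$ to $q$ (which exists since $q \in I^-(D(p'))$) yields a past-directed timelike curve from $D(p)$ to $q$, hence $q \in I^-(D(p))$. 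Combining the two inclusions gives $\hat D(I^-(p)) = I^-(D(p))$, which is a PIP of $\eeu$; this also proves the first sentence of the statement.

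The main obstacle I anticipate is making the ``easy direction'' fully rigorous, specifically the continuity/openness argument that a point $p'$ just below $p$ has image $D(p')$ both in $I^-(p)$'s image and in $I^+(q)$. One must be careful that $D$ being merely a \emph{local} diffeomorphism is enough: one works inside a single conformal chart around $p$, inside which $D$ is a genuine conformal embedding, so the causal relations there are transported faithfully, and chronological pasts/futures are open. The rest is routine manipulation of chronological relations together with the already-cited fact that developing maps preserve causal curves.
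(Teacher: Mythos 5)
Your proof is correct, but for the substantive inclusion it takes a genuinely different and more elementary route than the paper. The inclusion $\hat D(I^-(p))\subseteq I^-(D(p))$ is handled the same way in both: $D$ maps a past-directed timelike curve from $p$ to $p'$ onto one from $D(p)$ to $D(p')$, and transitivity of $\ll$ does the rest. The difference is in $I^-(D(p))\subseteq \hat D(I^-(p))$. The paper proves this by fixing $r\in I^-(D(p))$ and an arbitrary $q\in I^-(p)$, and then invoking Lemma \ref{lemma: D is injective on diamonds} to produce a point $q'\in J(p,q)$ with $D(q')\in J(D(p),r)$; i.e.\ it leans on the central diamond-injectivity result. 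You instead argue purely topologically: $I^+(r)$ is open and contains $D(p)$, so any past-directed timelike curve $\gamma$ emanating from $p$ in $M$ has $D(\gamma(t))\in I^+(r)$ for $t$ small, while $\gamma(t)\in I^-(p)$; hence $r\in I^-(D(\gamma(t)))\subseteq I^-(D(I^-(p)))$. This uses only continuity of $D$, openness of chronological futures, and the existence of past-directed timelike curves through $p$ — no injectivity statement at all — and it is arguably cleaner, since the paper's version passes through causal ($J$) rather than chronological ($I$) relations and has to push back into the open sets at the end. What the paper's approach buys is uniformity (the witness $q'$ is located inside a prescribed diamond $J(p,q)$, which is in the spirit of the surrounding sections), but for the statement as written your local argument suffices. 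Your worry about $D$ being only a local diffeomorphism is not really an issue: continuity alone carries the openness argument, so you do not even need to work inside a single conformal chart.
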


\begin{proof}
Set $U = I^-(p)$. We show that $I^-(D(U)) \subset I^-(D(p))$. Let $r \in I^-(D(U))$. Then, there exists $q \in I^-(p)$ such that $r \in I^-(D(q))$. Since $D$ is conformal, $D(q) \in I^-(D(p))$. By transitivity, we deduce that $r \in I^-(D(p))$. Thus, $I^-(D(U)) \subset I^-(D(p))$. 

Conversely, we prove that $I^-(D(p)) \subset I^-(D(U))$. Let $r \in I^-(D(p))$. Let $q \in I^-(p)$. If $D(q) \in J(D(p), r)$, then $r \in J^-(D(q)) \subset I^-(D(p))$. It follows easily that $r \in I^-(D(U))$. Otherwise, $J(D(p), r)$ and $J(D(p), D(q))$ have a non-empty intersection. By Lemma \ref{lemma: D is injective on diamonds}, we deduce that there exists $q' \in J(p,q)$ such that $D(q') \in J(D(p), r)$. Thus, $r \in J^-(D(q')) \subset I^-(D(p))$. Hence, $r \in I^-(D(U))$. The lemma follows. 
\end{proof}

\begin{lemma} \label{lemma: image of a TIP}
The image under $\hat{D}$ of a TIP of $M$ is a PIP of $\eeu$.
\end{lemma}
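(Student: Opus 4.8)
The statement to prove is Lemma \ref{lemma: image of a TIP}: the image under $\hat{D}$ of a TIP of $M$ is a PIP of $\eeu$. Recall that a TIP of $M$ is $I^-(\gamma)$ for some future-inextensible causal curve $\gamma$ in $M$, and that $\hat D(U) := I^-(D(U))$. Since the only TIP of $\eeu$ is $\eeu$ itself, what must be shown is that $\hat D(I^-(\gamma))$ is \emph{not} all of $\eeu$, i.e. that it is a proper past set, and is therefore a PIP, namely the chronological past of some point of $\eeu$. The strategy is to exhibit the point explicitly: follow the image curve $D(\gamma)$, which is a future-inextensible causal curve of $\eeu$, and use the global hyperbolicity of $\eeu$ to show that it must converge (in the cylinder model $\mathbb S^{n-1}\times\R$) to a point, or at least that $I^-(D(\gamma))$ is the chronological past of a point of the lightcone at the ``top'' of the curve.

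\textbf{Key steps.} First I would note that $D$ sends causal curves to causal curves (Lemma \ref{lemma: dev sends causal curve on causal curve}), so $D(\gamma)$ is a future-directed causal curve in $\eeu$. Second, using the fact that $M$ is not complete and has no conjugate points (so in particular $D(\gamma)$ is not a complete inextensible causal curve winding around the cylinder), I would argue that $I^-(D(\gamma))$ is a proper subset of $\eeu$; more precisely, working in a spatio-temporal decomposition $\mathbb S^{n-1}\times\R$, the curve $D(\gamma)$ stays within a bounded band of ``time slopes'' and hence $I^-(D(\gamma))$ misses at least one point. Third, I would identify $I^-(D(\gamma))$ with $I^-(x)$ for a suitable point $x\in\eeu$: since $\eeu$ is globally hyperbolic, any increasing union of chronological pasts of points along a causal curve either exhausts $\eeu$ or equals $I^-(x)$ where $x$ is the unique ``endpoint at infinity'' of $D(\gamma)$ (the limit of the $t$-coordinate together with the limit on the sphere). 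The cleanest route is: $D(\gamma)$ being future-inextensible and causal in the globally hyperbolic $\eeu$, it crosses every Cauchy slice $\mathbb S^{n-1}\times\{t\}$, so its $t$-coordinate tends to $+\infty$ unless it is future-imprisoned — which cannot happen — hence it accumulates on the lightcone of a point $x$ determined by its asymptotic spatial behaviour, and $I^-(D(\gamma)) = I^-(x)$.

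\textbf{Main obstacle.} The delicate point is ruling out the degenerate case in which $I^-(D(\gamma))$ could be all of $\eeu$: a priori a future-inextensible causal curve of $\eeu$ could ``wrap around'' and have chronological past equal to the whole space (this is exactly what happens for $\eeu$ itself, whose unique TIP is itself). Here one must genuinely use that $M$ has no conjugate points — equivalently, that $D(\gamma)$, being the developed image of a curve in a non-complete $M$, cannot reach a pair of conjugate points in $\eeu$ along a lightlike geodesic, and more strongly that $I^-(\gamma)$ in $M$ is a ``thin'' past set whose image cannot fill $\eeu$. The careful argument is to show that $D(\gamma)$ is contained in a single affine chart's causal future of its initial point intersected with a ``slab'' below a fixed lightcone, using the no-conjugate-points hypothesis propagated along $\gamma$ via Lemma \ref{lemma intro: diamonds}. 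Once properness is secured, pinning down the exact point $x$ with $\hat D(\text{TIP}) = I^-(x)$ is a routine compactness/causality argument in $\eeu$, analogous to the proof of Lemma \ref{lemma: image of a PIP}.
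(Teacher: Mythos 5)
Your overall reduction (an IP of $\eeu$ that is not all of $\eeu$ cannot be a TIP, hence is a PIP) is sound, and your ``main obstacle'' paragraph correctly identifies that the no-conjugate-points hypothesis must be propagated along $\gamma$ through the injectivity of $D$ on diamonds. But the mechanism you actually propose in ``Key steps'' is wrong, and this is a genuine gap. You keep the possibility that $D(\gamma)$ is future-inextensible in $\eeu$ and argue that it then ``accumulates on the lightcone of a point $x$'' with $I^-(D(\gamma)) = I^-(x)$. In the cylinder model $\mathbb{S}^{n-1}\times\R$, a future-inextensible causal curve of $\eeu$ crosses every Cauchy slice, so its $t$-coordinate tends to $+\infty$, and then $I^-(D(\gamma))$ is \emph{all} of $\eeu$ (the pasts $I^-(x_0,t_0)$ exhaust the cylinder as $t_0\to+\infty$); it is never $I^-(x)$ for a point $x$. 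There is no intermediate case of a ``bounded band of time slopes'': for a causal curve the dichotomy is finite future endpoint versus unbounded $t$-coordinate.

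The proof therefore has to rule out future-inextensibility of $D(\gamma)$ outright, which is what the paper does: if $D(\gamma)$ were future-inextensible, one could pick $q$ and then $p$ far enough along $\gamma$ that $J(D(p),D(q))$ contains a pair of conjugate points; by Lemma \ref{lemma: D is injective on diamonds} the diamond $J(p,q)$ of $M$ then contains conjugate points, and by Rossi's theorem $M$ would be complete, contradicting the standing assumption. Hence $D(\gamma)$ has a future endpoint $p_+$ in $\eeu$ and $\hat{D}(U)=I^-(D(\gamma))=I^-(p_+)$. Your ``slab below a fixed lightcone'' remark is essentially this argument in disguise, but as written you never draw the needed conclusion (existence of a finite endpoint), and the route you call ``cleanest'' would instead deliver $\hat{D}(U)=\eeu$. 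You also elide the preliminary identification $\hat{D}(U)=I^-(D(\gamma))$, which requires repeating the diamond argument of Lemma \ref{lemma: image of a PIP} rather than following from the definition alone.
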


\begin{proof}
Let $\gamma: [a,b[ \to M$ be a future-inextensible causal curve such that $I^-(\gamma) = U$. The same arguments used in the proof of Lemma \ref{lemma: image of a PIP} show that $\hat{D}(U) = I^-(D(\gamma))$. Suppose that $D(\gamma)$ is future-inextensible in $\eeu$. Then, there exists two points $p,q$ in the curve $\gamma$ such that $p \in J^+(q)$ and $J(D(p), D(q))$ contains conjugate points. By Lemma \ref{lemma: D is injective on diamonds}, we deduce that $J(p,q)$ contains conjugate points. By \cite[Theorem 10]{Salvemini2013Maximal}, we deduce that $M$ is complete. Contradiction. Hence, $D(\gamma)$ admits a future endpoint $p_+$ in $\eeu$. Consequently, $I^-(D(\gamma)) = I^-(p_+)$.
\end{proof}

Let $\partial^+ M$ denote the future causal boundary of $M$. It follows from Lemmas \ref{lemma: image of a PIP} and \ref{lemma: image of a TIP} that $\hat{D}$ induces a map 
$\hat{\mathbf{D}}$ from $M \cup \partial^+ M$ to $\eeu$ such that 
\begin{enumerate}
\item $\hat{\mathbf{D}}$ \emph{coincide with} $D$ on $M$;
\item for every TIP $U$ of $M$, we have
\begin{align*}
\hat{\mathbf{D}} (U) &= \lim_{t \to b} D \circ \gamma (t)
\end{align*}
where $\gamma: [a,b[ \to M$ is a future-inextensible causal curve such that $U = I^-(\gamma)$.
\end{enumerate}

Let $\partial^- M$ denote the past causal boundary of $M$. By symmetry, the map $D$ induces a map $\check{\mathbf{D}}$ from $M \cup \partial^- M$ to $\eeu$ defined similarly as above with the reverse time-orientation.

Now, let $M^\sharp$ be the causal completion of $M$, i.e. the union of $M$, $\partial^+ M$ and $\partial^- M$. The maps $\hat{\mathbf{D}}$ and $\check{\mathbf{D}}$ allow to extend the developing map $D$ in a map $D^\sharp: M^\sharp \to \eeu$ defined as follow:
\begin{align*}
D^\sharp(\xi) &= \left\lbrace \begin{array}{cc}
\hat{\mathbf{D}}(p) & \mathrm{if}\ p \in \partial^+ M\\
D(p)                & \mathrm{if}\ p \in  M \\
\check{\mathbf{D}}(p) & \mathrm{if}\ p \in \partial^- M.
\end{array} \right.
\end{align*}

\section{Domains of injectivity of the developing map of a GH conformally flat spacetime} \label{sec: canonical neighborhoods}

Let $M$ be a developable GH conformally flat spacetime of dimension $n \geq 3$. Consider a developing map $D$ from $M$ to $\eeu$. This section aims to highlight natural domains of injectivity of the developing map. More precisely, we prove that the restriction of the developing map to any IP/IF is injective.\\

First, in Section \ref{sec: diamonds}, we establish that the developing map is injective on \textbf{diamonds}. This fact proves to be to be central in two ways:

\begin{itemize}
\item On the one hand, it allows us to provide, in Section \ref{sec: conjugate points}, a concise and straightforward proof of Rossi's result, which states that \emph{the existence of conjugate points in $M$ is a sufficient condition for the developing map to be injective} (see \cite[Theorem 10]{Salvemini2013Maximal}). 
\item On the other hand, it allows us to prove, in Section \ref{sec: future of points}, that \emph{the developing map is injective on the future of any point in $M$} (and by symmetry, on the past of any point in $M$), using simple arguments. This result was first established by C. Rossi in her thesis \cite[Chap. 6, Sec. 2, Prop. 2.7]{salveminithesis}, but we propose a simpler and more direct proof here, relying on the new argument that diamonds are domains of injectivity.
\end{itemize} 

\subsection{Diamonds are domains of injectivity} \label{sec: diamonds}

Let us start with the following observation. 

\begin{lemma} \label{lemma: dev sends causal curve on causal curve}
The developing map sends injectively every causal (resp. timelike) curve of $M$ on a causal (resp. timelike) curve of $\eeu$.
\end{lemma}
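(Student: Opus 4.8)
The plan is to prove the statement locally and then globalize. Since the developing map $D: M \to \eeu$ is a local diffeomorphism and conformal by definition of a conformally flat structure, around every point of $M$ there is a neighborhood on which $D$ restricts to a conformal diffeomorphism onto an open subset of $\eeu$. A conformal diffeomorphism preserves the causal character of tangent vectors, hence maps causal (resp. timelike) curve segments to causal (resp. timelike) curve segments. So the only content to check is (i) that the image of an \emph{entire} causal curve of $M$ is again a causal curve of $\eeu$ (not merely a concatenation of causal segments that might fail to be causal at the joints — but this is automatic, since causality of a curve is a local condition on its tangent), and (ii) that $D$ is \emph{injective} along such a curve.

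First I would set up the local picture: fix a causal curve $c: I \to M$ (with $I$ an interval), and for each $t \in I$ choose an open set $U_t \ni c(t)$ on which $D|_{U_t}$ is a conformal diffeomorphism onto its image. On each parameter subinterval mapped into $U_t$, the curve $D \circ c$ has the same causal character as $c$ pointwise, because the conformal factor is positive and thus the sign of $g(\dot c, \dot c)$ is preserved; so $D \circ c$ is a causal (resp. timelike) curve of $\eeu$. This handles the "sends to a causal curve" part without any global argument.

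Next, for injectivity along $c$: here the key is that $D\circ c$ is a causal curve in $\eeu$, and $\eeu$ admits a time function — for instance, fixing a spatio-temporal decomposition $\eeu \simeq \mathbb{S}^{n-1} \times \R$, the $\R$-coordinate $t$ is a time function (strictly increasing along every future-directed causal curve, as follows from the explicit description of the causal structure of $\eeu$ recalled in Section~\ref{sec: universal Einstein}). Orient $c$ (say, future-directed); then $t \circ D \circ c$ is strictly monotone along $c$, hence injective, and therefore $D\circ c$ is injective, which means $D$ is injective \emph{in restriction to} the image of $c$. If $c$ is not inextensible or not a priori future-directed on all of $I$, one reduces to the case of a future-directed causal curve by reparametrizing/splitting, but since the statement concerns an arbitrary causal curve the monotonicity of $t\circ D\circ c$ still gives injectivity (a causal curve is either future- or past-directed and $t$ is monotone in either case).

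I expect the only mild subtlety — the "main obstacle," though it is minor — to be making sure that the $\R$-factor of a fixed spatio-temporal decomposition really is a time function on $\eeu$, i.e. strictly increasing along causal curves rather than merely non-decreasing; this is exactly the point where one uses that $\eeu$ is globally hyperbolic with Cauchy hypersurfaces $\mathbb{S}^{n-1}\times\{t\}$, so a causal curve meets each level set at most once and the coordinate is strictly monotone. Everything else is the routine observation that conformal maps preserve causal type. Once this is in hand, the lemma follows immediately, and it will be the workhorse for the subsequent injectivity-on-diamonds argument.
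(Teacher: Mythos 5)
Your proof is correct and follows essentially the same route as the paper's: conformality of the local diffeomorphism $D$ gives that $D\circ c$ is causal (resp. timelike), and injectivity comes from the causality of $\eeu$. The paper phrases the second step as ``otherwise $D(\gamma)$ would contain a closed causal loop in $\eeu$,'' whereas you invoke the strict monotonicity of the $\R$-coordinate of a spatio-temporal decomposition along causal curves; these are two formulations of the same fact, and your version is simply a little more explicit.
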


\begin{proof}
Let $\gamma$ be a causal curve of $M$. Since $D$ is conformal, the image of $\gamma$ under $D$ is a causal curve. If there are two distinct points $p, q$ of $\gamma$ such that $D(p) = D(q)$, then $D(\gamma)$ is a closed causal loop of $\eeu$. Contradiction.
\end{proof}

\begin{lemma}\label{lemma: D is injective on diamonds}
The restriction of the developing map to every non-empty diamond $J(p,q)$ of $M$ is a homeomorphism on its image equal to the diamond $J(D(p), D(q))$.
\end{lemma}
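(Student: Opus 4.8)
The plan is to prove injectivity and surjectivity separately, then upgrade to a homeomorphism. First I would show the image lands in the right place: since $D$ is conformal, it sends the causal curves realizing $J(p,q)$ to causal curves of $\eeu$, so $D(J(p,q)) \subset J^+(D(q)) \cap J^-(D(p)) = J(D(p),D(q))$ by an easy causal argument (any $r \in J(p,q)$ sits on a causal curve from $q$ to $p$ through $r$, whose image witnesses $D(r) \in J(D(p),D(q))$). The real content is injectivity. For this I would invoke the criterion of injectivity, Lemma \ref{lemma: nc injectivity}: the diamond $J(p,q)$ is a compact causally convex subset of $M$, hence globally hyperbolic, and it admits a Cauchy hypersurface $\Sigma$. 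The delicate point is to choose $\Sigma$ so that $D(\Sigma)$ is achronal in $\eeu$ and $D|_\Sigma$ is injective — then Lemma \ref{lemma: nc injectivity} gives injectivity on all of $J(p,q)$.

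To arrange this I would argue by a connectedness/continuity scheme on the space of diamonds, or more simply by a nested exhaustion: the key observation is that $I(p,q) = \bigcup J(p',q')$ over diamonds with $p' \in I^-(p)$, $q' \in I^+(q)$ compactly contained, so it suffices to treat diamonds that are "interior" to a slightly larger diamond; for such a diamond the image $D(J(p,q))$ is contained in the interior of the diamond $J(D(p_0),D(q_0))$ of a slightly larger diamond, and there one can find a spacelike Cauchy slice whose image is spacelike hence acausal. Concretely: pick a point $x_0$ in $I(p,q)$, let $\Sigma$ be the (image under $D^{-1}$, once injectivity is bootstrapped near $x_0$, or directly a) level set of the time function coming from a spatio-temporal decomposition pulled back by $D$; since $D$ is a local diffeomorphism and conformal, $D(\Sigma)$ is a smooth spacelike hypersurface in $\eeu$, hence locally achronal, and compactness plus the fact that $D(J(p,q))$ fits inside a single diamond of $\eeu$ (which has a well-behaved causal structure — in particular is causally convex when it contains no conjugate points) forces $D(\Sigma)$ to be genuinely achronal and $D|_\Sigma$ injective. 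The case where $J(D(p),D(q))$ contains conjugate points must be ruled out or handled: here I would note that a diamond of $\eeu$ containing conjugate points in its interior forces, via Lemma \ref{lemma: D is injective on diamonds} applied inductively on subdiamonds — or rather, forces by the argument feeding Theorem \ref{thm intro: conjugate points} — that $M$ is complete, contrary to our standing assumption; alternatively one observes directly that $J(p,q)$ compact maps into $J(D(p),D(q))$, and if the latter had conjugate points the developing map restricted to a suitable causal curve in $J(p,q)$ would have to "wrap around", contradicting Lemma \ref{lemma: dev sends causal curve on causal curve}.

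Once injectivity is established, surjectivity onto $J(D(p),D(q))$ follows from an open-closed argument: $D(J(p,q))$ is compact hence closed in $J(D(p),D(q))$; it contains $I(D(p),D(q))$ because $D$ restricted to the open diamond $I(p,q)$ is an injective local diffeomorphism whose image is open in $\eeu$, and a connectedness argument inside the (connected) open diamond $I(D(p),D(q))$ shows the image is all of it — here one uses that $I(p,q)$ is globally hyperbolic with the same "shape" of Cauchy hypersurface, so the map $D: I(p,q) \to I(D(p),D(q))$ is a proper injective local diffeomorphism between connected manifolds of the same dimension, hence a diffeomorphism onto a closed-and-open, thus full, subset. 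Taking closures gives $D(J(p,q)) = J(D(p),D(q))$. Finally, $D|_{J(p,q)}$ is a continuous injection from a compact space to a Hausdorff space, hence a homeomorphism onto its image.

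\textbf{Main obstacle.} The hard part will be producing the achronal Cauchy slice $\Sigma$ with $D(\Sigma)$ achronal: spacelike hypersurfaces are only \emph{locally} achronal, and globally a spacelike hypersurface in $\eeu$ can fail to be achronal. The argument must genuinely use that the compact set $D(J(p,q))$ sits inside a single diamond of $\eeu$ whose causal structure is controlled — in particular that, absent conjugate points (guaranteed by non-completeness of $M$ via Rossi's theorem), that diamond embeds in an affine chart and is causally convex there, so its spacelike slices \emph{are} achronal. Pinning down this reduction to the affine-chart picture, and the dichotomy "conjugate points in the image $\Rightarrow M$ complete", is where the care is needed.
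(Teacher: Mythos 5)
There is a genuine gap: your injectivity argument is never actually completed, and the fallback you propose for the hardest case is circular in the logical architecture of the paper. You correctly identify that the crux is producing a Cauchy slice $\Sigma$ with $D(\Sigma)$ achronal and $D|_\Sigma$ injective so as to apply Lemma \ref{lemma: nc injectivity}, but you then defer exactly that step (``once injectivity is bootstrapped near $x_0$\dots'') without supplying the bootstrap; a spacelike image is only locally achronal, and nothing in your sketch upgrades this to global achronality or to injectivity of $D$ on $\Sigma$ without already knowing something like the conclusion. Worse, your treatment of the case where $J(D(p),D(q))$ contains conjugate points appeals either to the lemma itself ``applied inductively'' or to Theorem \ref{thm intro: conjugate points} together with a standing assumption that $M$ is not complete. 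Neither is available: in the paper, Theorem \ref{thm intro: conjugate points} (Proposition \ref{prop: connjugate points}) is \emph{deduced from} this lemma, and the lemma is stated and needed for arbitrary developable GH conformally flat $M$, including precisely the case where diamonds of $M$ contain conjugate points --- no non-completeness hypothesis is in force in Section \ref{sec: diamonds}. So the conjugate-point case cannot be ``ruled out''; it must be covered by the proof.

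The paper's actual argument avoids all of this and is the idea you are missing: after the (correct) observation that $D(J(p,q))\subset J(D(p),D(q))$ via Lemma \ref{lemma: dev sends causal curve on causal curve}, one notes that $D_{|J(p,q)}:J(p,q)\to J(D(p),D(q))$ is proper (the source is compact) and a local homeomorphism, hence a covering map onto $J(D(p),D(q))$; since every diamond of $\eeu$ is simply connected for $n\ge 3$ (convex in an affine chart, or homeomorphic to a region fibering over $\mathbb{S}^{n-1}$ in the conjugate-point case), the covering is trivial and $D_{|J(p,q)}$ is a homeomorphism. Injectivity and surjectivity come out simultaneously, with no Cauchy hypersurface, no achronality issue, and no case distinction on conjugate points. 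Your closing open-closed/properness argument for surjectivity is essentially half of this covering-space argument; pushing it one step further, to ``proper local homeomorphism onto a simply connected target is a homeomorphism,'' would have given you injectivity for free and removed the need for Lemma \ref{lemma: nc injectivity} entirely.
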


\begin{proof}
By Lemma \ref{lemma: dev sends causal curve on causal curve}, $D(J(p,q)) \subset J(D(p), D(q))$. Set $D_{p,q} := D_{|J(p,q)}: J(p,q) \to J(D(p), D(q))$. Since $J(p,q)$ is compact, the map $D_{p,q}$ is proper. In addition, $D_{p,q}$ is a local homeomorphism. It follows that $D_{p,q}$ is a covering. Since $J(D(p), D(q))$ is simply connected, we deduce that $D_{p,q}$ is a homeomorphism.
\end{proof}

\begin{corollary} \label{cor: D is injective on interior of diamonds}
Let $J(p,q)$ be a diamond with a non-empty interior. Then, the image of the interior of $J(p,q)$ under the developing map $D$ is exactly the interior of the diamond $J(D(p), D(q))$ of $\eeu$.
\end{corollary}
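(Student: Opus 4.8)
The plan is to deduce the statement directly from Lemma~\ref{lemma: D is injective on diamonds} together with the fact that the developing map $D$ is an open local homeomorphism. First I would recall that $D_{p,q} := D_{|J(p,q)}$ is a homeomorphism from $J(p,q)$ onto $J(D(p),D(q))$, and that a homeomorphism between topological spaces carries the interior of a subset (computed in the ambient space) onto the interior of its image \emph{provided} the relevant notions of interior are compatible. The subtlety is that ``interior of the diamond'' here means the interior $I(p,q) = I^+(q) \cap I^-(p)$ taken inside $M$ (resp. inside $\eeu$), not the topological interior of $J(p,q)$ as a subspace of itself. So the first key step is to observe that $I(p,q)$ is precisely the topological interior of $J(p,q)$ computed in $M$; this is a standard causality fact in a globally hyperbolic (or merely strongly causal) spacetime, and likewise $I(D(p),D(q))$ is the topological interior of $J(D(p),D(q))$ in $\eeu$.

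Next I would argue that $D_{p,q}$ matches these ambient interiors. On one side, $D$ is a local homeomorphism onto an open subset of $\eeu$, so it is an open map; hence $D(I(p,q))$, being the image of an open subset of $M$, is open in $\eeu$, and it is contained in $J(D(p),D(q))$ by Lemma~\ref{lemma: D is injective on diamonds}, therefore contained in the ambient interior $I(D(p),D(q))$. For the reverse inclusion, take a point $y \in I(D(p),D(q))$; by Lemma~\ref{lemma: D is injective on diamonds} there is a unique $x \in J(p,q)$ with $D(x) = y$. If $x$ were a boundary point of $J(p,q)$ in $M$, then every neighborhood of $x$ in $M$ would meet $M \setminus J(p,q)$; applying the open map $D$ and using that $D_{p,q}$ is a bijection onto $J(D(p),D(q))$, one gets that $y$ is a boundary point of $J(D(p),D(q))$ in $\eeu$ — equivalently $y \notin I(D(p),D(q))$ — a contradiction. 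Hence $x \in I(p,q)$ and $y \in D(I(p,q))$, giving $D(I(p,q)) = I(D(p),D(q))$.

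Alternatively, and perhaps more cleanly, one can just say: since $D_{p,q}: J(p,q) \to J(D(p),D(q))$ is a homeomorphism and $D$ is a homeomorphism onto an open neighborhood of $J(p,q)$ in $M$ (shrinking if needed, $D$ is at least a local homeomorphism everywhere), the restriction of $D$ to the open set $I(p,q)$ is an open embedding into $\eeu$ whose image lies in $J(D(p),D(q))$ and is open, hence lies in $I(D(p),D(q))$; surjectivity onto $I(D(p),D(q))$ follows because any $y\in I(D(p),D(q))$ has its unique $D_{p,q}$-preimage $x$, and $x\notin I(p,q)$ would force, via the local homeomorphism property of $D$ at $x$, a small two-sided neighborhood whose $D$-image is not contained in $J(D(p),D(q))$, again contradicting that $D_{p,q}$ is onto $J(D(p),D(q))$.

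The main obstacle is purely bookkeeping: being careful that ``interior of a diamond'' is the chronological diamond $I(p,q)$ taken in the ambient spacetime and that this coincides with the point-set topological interior of $J(p,q)$, so that the topological transport of interiors under the homeomorphism $D_{p,q}$ actually says what we want. Once that identification is in place — which is immediate from strong causality / global hyperbolicity — the corollary is a one-line consequence of Lemma~\ref{lemma: D is injective on diamonds} and the openness of the local homeomorphism $D$. I do not expect any genuine difficulty beyond this.
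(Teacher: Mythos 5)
Your strategy here is genuinely different from the paper's: you treat the corollary as a purely topological statement about a local homeomorphism restricting to a homeomorphism between two compact sets, whereas the paper argues causally. The easy inclusion $D(I(p,q)) \subset I(D(p),D(q))$ via openness of $D$ is fine (the paper gets it instead from the fact that $D$ sends timelike curves to timelike curves). But the key step of your reverse inclusion has a real gap. You claim that since every neighborhood of a boundary point $x \in \partial J(p,q)$ meets $M \setminus J(p,q)$, ``applying the open map $D$'' and the bijectivity of $D_{p,q}$ shows that every neighborhood of $y = D(x)$ meets $\eeu \setminus J(D(p),D(q))$. This inference is not valid: $D$ is not globally injective, so a point $z$ near $x$ with $z \notin J(p,q)$ may perfectly well satisfy $D(z) \in J(D(p),D(q))$ --- its image would simply be a point whose unique $D_{p,q}$-preimage sits elsewhere in $J(p,q)$. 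Neither the openness of $D$ nor the bijectivity of $D_{p,q}$ rules this out by itself, and the same objection applies to the ``two-sided neighborhood whose $D$-image is not contained in $J(D(p),D(q))$'' variant in your last paragraph, which asserts exactly the point at issue.

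The step you need is true and your route can be completed, but it requires one further idea that you do not supply: the map $D_{p,q}^{-1}$ restricted to the open set $I(D(p),D(q))$ is a continuous section of the local homeomorphism $D$, and a continuous section of a local homeomorphism is an open map (equivalently, two local sections agreeing at a point agree on a connected neighborhood). Hence $D_{p,q}^{-1}\bigl(I(D(p),D(q))\bigr)$ is an open subset of $M$ contained in $J(p,q)$, therefore contained in $I(p,q)$, which gives the reverse inclusion at once. For comparison, the paper avoids the topological subtlety entirely: it shows that a point $r \in J(p,q)\setminus I(p,q)$ is joined to $p$ or to $q$ by a lightlike geodesic, whose $D$-image is a lightlike geodesic forcing $D(r)$ into the boundary of $J(D(p),D(q))$ --- but only after reducing to the case where $I(D(p),D(q))$ lies in an affine chart, by inserting a small sub-diamond around the target point. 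That reduction is not cosmetic: in $\eeu$ a lightlike geodesic issued from $D(p)$ can re-enter $I^-(D(p))$ past a conjugate point, so the causal step is only valid inside an affine chart. Your topological argument, once repaired as above, has the advantage of bypassing this case distinction altogether.
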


\begin{proof}
By Lemma \ref{lemma: dev sends causal curve on causal curve}, $D(I(p,q)) \subset I(D(p), D(q))$. Conversely, let  $\bar{r} \in I(D(p), D(q))$. 

Suppose first that $I(D(p), D(q))$ is contained in an affine chart of $\eeu$. By Lemma \ref{lemma: D is injective on diamonds}, there is a unique $r \in J(p,q)$ such that $D(r) = \bar{r}$. Suppose that $r \not \in I(p,q)$. Then, there is a past-directed lightlike geodesic $\varphi$ joining $p$ to $r$ or $r$ to $q$. Let us deal with the case where $\varphi$ joins $p$ to $r$, the other case is similar. Since $D$ is conformal and preserves the time-orientation, the image of $\varphi$ under $D$ is a past-directed lightlike geodesic of the affine chart $\R^{1,n}$ joining $D(p)$ to $D(r)$. Thus, $D(r)$ is in the boundary of $J(D(p), D(q))$. Contradiction. Hence, $r \in I(p,q)$. 

Suppose now that $I(D(p), D(q))$ is not contained in an affine chart. Since $\bar{r}$ is in the interior of $J(D(p), D(q))$, there exists an open diamond $I(\bar{p}', \bar{q}')$ contained in affine chart such that $\bar{r} \in I(\bar{p}', \bar{q}') \subset J(D(p), D(q))$. By Lemma \ref{lemma: D is injective on diamonds}, there exists $p', q' \in J(p,q)$ such that $D(J(p',q')) = J(\bar{p}', \bar{q}')$. Then, according to the previous paragraph, $\bar{r} = D(r)$ where $r \in I(p',q') \subset I(p,q)$. The corollary follows. 
\end{proof}

\subsection{Conjugate points and injectivity of the developing map} \label{sec: conjugate points}

It was proved in \cite{Salvemini2013Maximal} that the existence of conjugate points in $M$ is a sufficient condition for $D$ to be injective on $M$. We give a new simple proof of this result relying on the fact that diamonds are domains of injectivity.

\begin{proposition} \label{prop: connjugate points}
If $M$ admits conjugate points, then $M$ admits a topological $(n-1)$-sphere as a Cauchy-hypersurface and $D$ is injective on $M$.
\end{proposition}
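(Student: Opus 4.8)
The plan is to exploit the existence of conjugate points $p, q \in M$ directly through the diamond machinery. By definition, the images $D(p)$ and $D(q)$ in $\eeu$ are conjugate, so after relabelling we may assume $D(p) = \sigma(D(q))$, hence $D(q) \in I^-(D(p))$ and the diamond $J(D(q), D(p))$ — wait, we need $p$ and $q$ themselves to be causally related in $M$ before we can invoke Lemma \ref{lemma: D is injective on diamonds}. Since $\eeu$ has nontrivial causal structure but any two conjugate points in $\eeu$ are strictly lightlike-connected (they bound complete lightlike geodesics), the points $p$ and $q$ are causally related in $M$: indeed one should argue that if $D(p), D(q)$ are conjugate then some causal curve in $M$ joins them, or more carefully, use that $M$ being GH and $p,q$ conjugate forces a causal curve between them. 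Assuming $p \in J^+(q)$, form the diamond $J(p,q)$; by Lemma \ref{lemma: D is injective on diamonds} it maps homeomorphically onto $J(D(p), D(q)) = J(\sigma(D(q)), D(q))$, which is a diamond of $\eeu$ whose vertices are conjugate and which is therefore homeomorphic to a cylinder $\mathbb{S}^{n-1} \times \R$ (this is exactly the last case analyzed in Section \ref{sec: diamonds in the Einstein universe}). So $J(p,q) \cong \mathbb{S}^{n-1} \times [\text{interval}]$ and $M$ contains a closed cylinder neighborhood.

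The key step is to produce a Cauchy hypersurface of $M$ from this cylinder. I would take a Cauchy hypersurface $\Sigma$ of $M$ and look at $\Sigma \cap I(p,q)$: because $J(D(p), D(q))$ is the image and the diamond with conjugate vertices in $\eeu$ has Cauchy hypersurfaces which are topological $(n-1)$-spheres (the spheres $\mathbb{S}^{n-1} \times \{t\}$), the level sphere of the decomposition pulls back under $(D_{p,q})^{-1}$ to an embedded topological $(n-1)$-sphere $S$ inside $I(p,q) \subset M$. I then claim $S$ is a Cauchy hypersurface of all of $M$. For this I would show $S$ is acausal in $M$ (it is acausal in the diamond, and causal convexity of the diamond in $M$ promotes this) and that every inextensible causal curve of $M$ meets $S$: an inextensible causal curve entering $I(p,q)$ must, inside the diamond, cross the Cauchy sphere of the diamond; conversely a causal curve of $M$ that never enters $I(p,q)$ would have to avoid a "full spatial slice", contradicting that $S$ already separates $M$ — here one uses that a compact topological hypersurface without boundary which is acausal in a GH spacetime and meets some timelike curve is automatically Cauchy, or more hands-on, that $M \cong S \times \R$ once $S$ is Cauchy so $S$ being an embedded sphere forces the topology. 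Finally, $D$ injective on $M$: apply the Criterion of injectivity (Lemma \ref{lemma: nc injectivity}) with $U = M$ and Cauchy hypersurface $S$ — we need $D(S)$ achronal in $\eeu$, which holds because $D(S)$ is the Cauchy sphere of the diamond $J(D(p), D(q))$, which is acausal in $\eeu$, and $D|_S$ is injective since $S \subset J(p,q)$ where $D$ is injective.

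The main obstacle I anticipate is the step establishing that the sphere $S$ — built as the preimage of a Cauchy slice of the diamond — is actually a Cauchy hypersurface of the \emph{whole} spacetime $M$, not just of the diamond. The subtlety is ruling out inextensible causal curves of $M$ that stay outside $I(p,q)$ forever; the cleanest route is probably: $S$ is a compact acausal spacelike topological hypersurface, and in a GH spacetime such an $S$ is Cauchy provided its Cauchy development $D(S)$ is all of $M$; one shows $\partial D^+(S) = \emptyset$ by noting that a point on the future Cauchy horizon would be reached by a past-inextensible null curve staying in the compact region near $S$, forcing that curve into $I(p,q)$ where it must cross $S$ — contradiction. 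Alternatively one can invoke that $M$ is GH hence $M \cong \Sigma_0 \times \R$, and that a compact embedded acausal hypersurface in such a product meeting every fiber $\{x\} \times \R$ is graphical, hence Cauchy; meeting every fiber follows because the fibers are inextensible timelike curves and $S$ disconnects $M$ (being a closed separating hypersurface in the cylinder-topology forced on $M$). Once $S$ is Cauchy, the rest is a direct application of Lemma \ref{lemma: nc injectivity}, and the sphere topology of $S$ shows $M$ has an $(n-1)$-sphere as Cauchy hypersurface.
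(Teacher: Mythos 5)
There is a genuine gap at the heart of your construction. In this paper ``conjugate'' means $D(p)=\sigma^{\pm 1}(D(q))$, and for two such \emph{adjacent} conjugate points the diamond $J(\sigma(\mathrm{x}),\mathrm{x})$ of $\eeu$ is \emph{degenerate}: writing $\mathrm{x}=(x_0,t_0)$, the two conditions $d(y,x_0)\le t-t_0$ and $d(y,-x_0)\le (t_0+\pi)-t$ force $d(y,x_0)=t-t_0$, so $J(\sigma(\mathrm{x}),\mathrm{x})$ is exactly the lightcone segment joining $\mathrm{x}$ to $\sigma(\mathrm{x})$ and has \emph{empty interior}. The cylinder description you invoke from Section \ref{sec: diamonds in the Einstein universe} applies only to diamonds containing conjugate points \emph{in their interior}, whose vertices are separated by more than a full period; it does not apply to $J(D(p),D(q))$ itself. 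Consequently $I(p,q)=\emptyset$, there is no ``level sphere of the decomposition'' inside it, and the hypersurface $S$ you propose to pull back does not exist. (The degenerate set is not useless, though: $J(D(p),D(q))$ is the graph $y\mapsto (y,t_0+d(y,x_0))$ over $\mathbb{S}^{n-1}$, hence is itself a compact edgeless \emph{achronal} --- not acausal, it is ruled by lightlike geodesics --- topological $(n-1)$-sphere.)

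The repair, which is what the paper does, is to fatten the diamond: choose $p'\in I^+(p)$ and $q'\in I^-(q)$, so that $J(p,q)\subset I(p',q')$ and $D$ is injective on $I(p',q')$ with image $I(D(p'),D(q'))$ by Lemma \ref{lemma: D is injective on diamonds} and Corollary \ref{cor: D is injective on interior of diamonds}. This \emph{open} diamond contains the conjugate pair, hence contains an edgeless achronal compact topological $(n-1)$-sphere $\Sigma$ (one may take $\Sigma=J(D(p),D(q))$ itself); its preimage $S=(D|_{I(p',q')})^{-1}(\Sigma)$ is a compact achronal edgeless topological hypersurface of $M$. Your ``main obstacle'' --- showing $S$ is Cauchy for all of $M$ and not just for the diamond --- is then disposed of by the standard fact that a compact achronal edgeless topological hypersurface of a globally hyperbolic spacetime is a Cauchy hypersurface (the paper cites \cite[Annexe B, Cor. B.6.2]{Smai2022}); your sketch of this step is essentially that argument. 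The conclusion via Lemma \ref{lemma: nc injectivity} with $U=M$ is then correct as you state it, noting that only achronality of $D(S)=\Sigma$ is needed, not acausality. Finally, your hesitation about whether conjugate points of $M$ are causally related is a fair observation --- the paper's running definition only constrains the images under $D$ --- but the paper, like you, simply takes $p\in J^+(q)$ as part of the hypothesis, consistent with the introduction's definition via lightlike geodesics joining $p$ to $q$.
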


\begin{proof}
Let $p,q \in M$ be two conjugate points such that $p \in J^+(q)$. Let $p' \in I^+(p)$ and $q' \in I^-(q)$. Then, $J(p,q) \subset I(p', q')$. By Lemma \ref{lemma: D is injective on diamonds}, the developing map $D$ is injective on $I(p',q')$ and $D(I(p',q')) = I(D(p'), D(q'))$. This open diamond of $\eeu$ contains the conjugate points $D(p)$ and $D(q)$, and thus contains an edgeless achronal topological $(n-1)$-sphere $\Sigma$. We denote by $S$ the preimage of $\Sigma$ under $D$ in $I(p',q')$. This is an achronal compact topological hypersurface of $I(p',q')$, and thus of $M$. We deduce that $S$ is a Cauchy hypersurface of $M$ (see e.g. \cite[Annexe B, Cor. B.6.2]{Smai2022}). The proposition follows from Lemma \ref{lemma: nc injectivity}.
\end{proof}

As a consequence, we obtain immediately Rossi's result:

\begin{corollary} \label{cor: conjugate points}
Let $V$ be a non-developable globally hyperbolic maximal conformally flat spacetime. If $\tilde{V}$ admits conjugate points then $V$ is a finite quotient of $\eeu$. \qed
\end{corollary}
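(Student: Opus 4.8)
The plan is to deduce Corollary \ref{cor: conjugate points} directly from Proposition \ref{prop: connjugate points} together with the earlier structural results about developable spacetimes and their maximal extensions. First I would pass to the universal cover: since $V$ is a globally hyperbolic maximal conformally flat spacetime, its universal cover $\tilde{V}$ is a simply-connected globally hyperbolic conformally flat spacetime, hence developable, and it inherits a developing map $D : \tilde{V} \to \eeu$ together with the holonomy $\rho : \pi_1(V) \to \Conf(\eeu)$. By hypothesis $\tilde{V}$ admits conjugate points, so Proposition \ref{prop: connjugate points} applies to $M = \tilde{V}$: it yields that $\tilde{V}$ admits a topological $(n-1)$-sphere $S$ as a Cauchy hypersurface, and that $D$ is injective on $\tilde{V}$.

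Next I would promote injectivity to bijectivity. The image $D(\tilde{V})$ is an open causally convex subset of $\eeu$ containing the compact achronal sphere $\Sigma = D(S)$; since $D$ restricts to a homeomorphism on $S$ onto $\Sigma$, and $\Sigma$ is a Cauchy hypersurface of $\eeu$ (an edgeless achronal topological $(n-1)$-sphere in $\eeu$ is a Cauchy hypersurface), the argument of Lemma \ref{lemma: nc injectivity} combined with the fact that Cauchy developments are determined by Cauchy hypersurfaces shows that $D(\tilde{V})$ is the Cauchy development of $\Sigma$ in $\eeu$, i.e. all of $\eeu$. Hence $D : \tilde{V} \to \eeu$ is a conformal diffeomorphism, so $\tilde{V}$ is conformally equivalent to $\eeu$ and the conformally flat structure on $V$ is complete: $V \cong \eeu / \Gamma$ where $\Gamma = \rho(\pi_1(V))$ is a discrete group of conformal transformations acting freely and properly discontinuously on $\eeu$.

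It remains to see that $\Gamma$ is finite. The group $\Gamma$ must preserve the causal structure of $\eeu$ (it acts by conformal transformations) and, because $V$ is globally hyperbolic, it preserves the Cauchy foliation up to the time-orientation; more to the point, $\Gamma$ acts on the sphere $\Sigma$ — equivalently, after identifying $\eeu \cong \mathbb{S}^{n-1} \times \R$, it normalizes the line field $\partial_t$ and hence acts on the space of leaves, which is compact ($\mathbb{S}^{n-1}$). Since $\Gamma$ acts freely and properly discontinuously on $\eeu$ with compact quotient Cauchy hypersurface $\Sigma/\Gamma$, and $\Sigma \cong \mathbb{S}^{n-1}$ is compact, the quotient map $\Sigma \to \Sigma/\Gamma$ is a covering of compact manifolds, so $\Gamma$ acts on $\mathbb{S}^{n-1}$ properly discontinuously and freely, which forces $\Gamma$ to be finite. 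Therefore $V$ is a finite quotient of $\eeu$, which is the claim.

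The main obstacle is the passage from "$D$ is injective" to "$D$ is surjective" and then to finiteness of $\Gamma$: one must be careful that injectivity of a developing map on a globally hyperbolic spacetime does not by itself give surjectivity, and here the point is genuinely that the Cauchy hypersurface $S$ develops onto a \emph{topological sphere} $\Sigma$ which is already Cauchy in the model $\eeu$ — this is exactly where the hypothesis of conjugate points is used, and it is what pins down $D(\tilde V) = \eeu$. Once that is in place, the finiteness of $\Gamma$ is the relatively soft step, following from the fact that a group acting freely, properly discontinuously on a compact manifold (the Cauchy sphere) is finite. I would take care to cite the relevant statement that an edgeless achronal topological sphere in $\eeu$ is a Cauchy hypersurface (the analogue of \cite[Annexe B, Cor. B.6.2]{Smai2022} used in the proof of Proposition \ref{prop: connjugate points}), and to note that maximality of $V$ is what allows us to identify $V$ itself (rather than just some Cauchy-extension of it) with $\eeu/\Gamma$.
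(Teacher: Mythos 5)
Your overall skeleton matches the paper's intended argument (the corollary is stated with a \qed as an immediate consequence of Proposition \ref{prop: connjugate points}: the universal cover acquires a compact spherical Cauchy hypersurface, hence $\pi_1(V)$ is finite, and maximality upgrades the resulting Cauchy-embedding to a conformal equivalence). However, the justification you give for the surjectivity step is wrong where it matters. You claim that because $\Sigma = D(S)$ is a Cauchy hypersurface of $\eeu$, "Cauchy developments are determined by Cauchy hypersurfaces" forces $D(\tilde V)$ to equal the Cauchy development of $\Sigma$ in $\eeu$, hence all of $\eeu$, and you repeat in your closing paragraph that the Cauchy-sphere property alone "pins down $D(\tilde V) = \eeu$". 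This is false without maximality: the slab $\mathbb{S}^{n-1} \times (-a,a) \subset \eeu$ with $a > \pi/2$ is a simply-connected globally hyperbolic conformally flat spacetime containing conjugate points, its developing map (the inclusion) is injective, and its image contains the Cauchy sphere $\mathbb{S}^{n-1} \times \{0\}$ of $\eeu$, yet that image is a proper subset of $\eeu$. The fact that $\Sigma$ is Cauchy in $\eeu$ only tells you that $D$ is a conformal \emph{Cauchy-embedding} of $\tilde V$ into $\eeu$; no property of Cauchy developments makes that embedding onto. What closes the gap is precisely the maximality of $V$: the Cauchy-embedding descends to a conformal Cauchy-embedding of $V$ into $\eeu / \rho(\pi_1(V))$ (using that $\pi_1(V)$ is finite and $D$ is $\rho$-equivariant and injective), and maximality of $V$ forces this embedding to be surjective. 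In your write-up maximality is relegated to a parenthetical role at the very end, after you have already asserted $D(\tilde V) = \eeu$; the two claims are inconsistent, and it is the first one that needs maximality. Note that the paper states Theorem \ref{thm intro: conjugate points} only as "the developing map is a diffeomorphism \emph{on its image}", exactly because surjectivity is not part of what conjugate points give you.

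A secondary, more minor point: your finiteness argument for $\Gamma = \pi_1(V)$ detours through "$\Gamma$ normalizes the line field $\partial_t$ and acts on the space of leaves". The deck group has no reason to preserve the vector field $\partial_t$ of a fixed spatio-temporal decomposition, since its holonomy is an arbitrary finite-order element of $\Conf(\eeu)$, nor does it preserve the particular sphere $S$ produced by Proposition \ref{prop: connjugate points}. The clean argument, and the one the introduction sketches, is that the deck group preserves the lift to $\tilde V$ of any Cauchy hypersurface of $V$; this lift is compact because all Cauchy hypersurfaces of $\tilde V$ are homeomorphic to $\mathbb{S}^{n-1}$, and a group acting freely and properly discontinuously on a compact space is finite. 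With that correction, and with maximality invoked at the surjectivity step rather than after it, your proof becomes the paper's.
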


By analogy with the Riemannian setting, we state the following definition.

\begin{definition}
A conformally flat spacetime is said to be \emph{elliptic} if it is the quotient of $\eeu$ by a finite subgroup of conformal transformations of $\eeu$.
\end{definition}

\subsection{GH conformally flat spacetimes without conjugate points} \label{sec: future of points}

We suppose now that $M$ does not contain conjugate points. We devote this section to the proof of the following proposition.

\begin{proposition}\label{prop: injectivity on the future of p}
For every $p \in M$, the restriction of the developing map $D$ to the chronological past $I^-(p)$ of $p$ is injective.
\end{proposition}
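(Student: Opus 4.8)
The plan is to realize $I^-(p)$ as a union of open diamonds sharing the common future vertex $p$, apply Lemma \ref{lemma: D is injective on diamonds} (and its Corollary \ref{cor: D is injective on interior of diamonds}) on each such diamond, and then glue these injectivity statements together using the \emph{Lemme des assiettes}. Concretely, for every $q \in I^-(p)$ pick a point $q^- \in I^-(q)$; then $q \in I(p,q^-) \subset I^-(p)$, so $I^-(p) = \bigcup_{q^- \in I^-(p)} I(p,q^-)$ is an open cover by interiors of diamonds all having $p$ as future vertex. On each $I(p,q^-)$ the developing map $D$ is injective with image the open diamond $I(D(p), D(q^-))$ of $\eeu$. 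To conclude injectivity on the whole union, the \emph{Lemme des assiettes} requires that the image under $D$ of the intersection of any two of these diamonds be \emph{connected} (so that the two local inverses agree on an overlap and patch to a global inverse).

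So the heart of the proof — and the main obstacle, exactly as flagged in the introduction — is the following geometric fact in $\eeu$: \emph{the intersection of the interiors of two diamonds of $\eeu$ sharing the same past vertex is connected}. Note the intersection $I(D(p),D(q_1^-)) \cap I(D(p),D(q_2^-))$ is itself $I^-(D(p)) \cap I^+(D(q_1^-)) \cap I^+(D(q_2^-))$. Since $M$ has no conjugate points, $I(p,q^-)$ never contains conjugate points, so each $I(D(p),D(q^-))$ is contained in the affine chart $\Mink_-(p')$ for a suitable lift, or more simply lies in $\Mink_0(\sigma^{-1}(D(p))) = \Mink_-(D(p))$, an affine chart conformal to Minkowski spacetime. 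Working inside that fixed affine chart, $I^+(D(q_1^-)) \cap I^+(D(q_2^-))$ is an intersection of two \emph{convex} future cones (open future half-cones in Minkowski space), hence convex, and intersecting further with the convex set $I^-(D(p))$ keeps it convex, in particular connected. The key point that makes this work — unlike the general two-diamond case discussed in Section \ref{sec: intersection of diamonds} — is that sharing the past vertex $D(p)$ forces all the relevant diamonds into a \emph{single} affine chart $\Mink_-(D(p))$, where Minkowski convexity is available.

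With this connectedness in hand, the argument runs: the family $\{I(p,q^-)\}$ together with the local inverses $(D|_{I(p,q^-)})^{-1}$ forms an "atlas of plates" over $D(I^-(p)) = \bigcup I(D(p),D(q^-))$; pairwise the image-intersections are connected, so two local inverses that agree at one point of an overlap agree on the whole overlap; since $I^-(p)$ is connected (it is, being a union of the connected sets $I(p,q^-)$ all containing points arbitrarily close to $p$, and in fact it is path-connected via timelike curves ending at $p$), the local inverses glue to a single continuous inverse of $D$ on $I^-(p)$, proving injectivity. One should double-check the mild point that any two of the diamonds $I(p,q_1^-), I(p,q_2^-)$ actually overlap in $M$ — but this is clear since both contain a neighbourhood of a common point: indeed for $q_3^- \in I^-(q_1^-) \cap I^-(q_2^-)$ (which exists by directedness of the past in a spacetime, taking $q_3^-$ in the past of a point in $I^-(q_1^-)\cap I^-(q_2^-)$, nonempty as an intersection of two open past sets of points with a common past point, e.g. any point far enough down a timelike curve through both) one gets $I(p,q_3^-) \subset I(p,q_1^-) \cap I(p,q_2^-)$ nonempty. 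The statement then follows from the \emph{Lemme des assiettes}.
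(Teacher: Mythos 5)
Your proof is correct and follows essentially the same route as the paper: cover $I^-(p)$ by the open diamonds $I(p,q)$ with common future vertex $p$, observe that the absence of conjugate points forces all their images $I(D(p),D(q))$ into the single affine chart $\Mink_-(D(p))$, where convexity of future cones gives connected pairwise intersections, and conclude by the \emph{Lemme des assiettes}. One harmless slip: for $q_3^- \in I^-(q_1^-)\cap I^-(q_2^-)$ the inclusion goes the other way, $I(p,q_3^-) \supset I(p,q_1^-)\cup I(p,q_2^-)$; the nonemptiness of the overlap follows more directly from the fact that $I^+(q_1^-)\cap I^+(q_2^-)$ is an open neighbourhood of $p$, which therefore meets $I^-(p)$.
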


The key idea is to describe $I^-(p)$ as the union of the open diamonds $I(p,q)$, where $q \in I^-(p)$. Since each one of these diamonds is a domain of injectivity, all we need to check is that the union of two of them is also a domain of injectivity. To do this, we use the following classical lemma.

\begin{lemma}[Lemme des assiettes]\label{lemme des assiettes}
Let $U$ and $V$ be two open subsets of $M$ on which the developing map $D$ in injective. We suppose that the intersection of $U$ and $V$ is non-empty. Then, if the intersection of the images $D(U)$ and $D(V)$ is connected, the developing map $D$ is injective on the union of $U$ and $V$. 
\end{lemma}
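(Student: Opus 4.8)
The plan is to prove the \emph{Lemme des assiettes} by a standard connectedness argument on the overlap. The setup: $D$ is injective on $U$ and on $V$, the sets $U, V$ are open, $U \cap V \neq \emptyset$, and $D(U) \cap D(V)$ is connected; we want $D$ injective on $U \cup V$. Suppose for contradiction that there exist $x \in U$, $y \in V$ with $x \neq y$ and $D(x) = D(y)$. Since $D$ is injective on $U$ and on $V$ separately, we must have $x \notin V$ and $y \notin U$; in particular $x \neq y$ is consistent with $D(x) = D(y) \in D(U) \cap D(V)$.

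First I would set $W := D(U) \cap D(V)$, which is open (intersection of two open sets, since $D$ is a local homeomorphism hence an open map) and, by hypothesis, connected. Define $\varphi := (D|_V)^{-1} \circ (D|_U)$ on the open set $D|_U^{-1}(W) \subset U$; this is well-defined because for $u$ in this set, $D(u) \in W \subset D(V)$, so it has a unique preimage in $V$. The map $\varphi$ is a homeomorphism from $D|_U^{-1}(W)$ onto $D|_V^{-1}(W)$ commuting with $D$, i.e. $D \circ \varphi = D$. Now consider the set $A := \{u \in D|_U^{-1}(W) : \varphi(u) = u\}$. The key claim is that $A$ is both open and closed in $D|_U^{-1}(W)$. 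Closedness is immediate since $A$ is the coincidence set of two continuous maps into the (Hausdorff) manifold. Openness follows because if $\varphi(u_0) = u_0$, then near $u_0$ both $u \mapsto u$ and $u \mapsto \varphi(u)$ are local sections of $D$ agreeing at $u_0$; two local inverses of a local homeomorphism that agree at a point agree on a neighborhood.

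The next step is to transfer this dichotomy to the connected set $W$. Since $D|_U : D|_U^{-1}(W) \to W$ is a homeomorphism, $A$ is open and closed in $D|_U^{-1}(W)$ if and only if $D(A)$ is open and closed in $W$. As $W$ is connected, either $D(A) = \emptyset$ or $D(A) = W$. To rule out $D(A) = \emptyset$: pick a point $z \in U \cap V$ (nonempty by hypothesis); then $D(z) \in W$, and $\varphi(z)$ is the unique preimage of $D(z)$ in $V$, but $z$ itself is such a preimage and $D|_V$ is injective, so $\varphi(z) = z$, i.e. $z \in A$. Hence $D(A) = W$, which means $\varphi = \mathrm{id}$ on all of $D|_U^{-1}(W)$. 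In other words, every point of $U$ whose image lies in $W$ already belongs to $V$ (with the same preimage).

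Finally I would derive the contradiction. We have $x \in U$ with $D(x) = D(y) \in D(V)$, so $D(x) \in W$, hence $x \in D|_U^{-1}(W)$; by the conclusion $\varphi(x) = x$, so $x \in V$. But $x$ and $y$ are both preimages of $D(x)$ under $D|_V$, which is injective, forcing $x = y$ — contradicting $x \neq y$. Therefore $D$ is injective on $U \cup V$. The main (and only real) subtlety is the openness of the coincidence set $A$, which rests on the elementary but essential fact that two local sections of a local homeomorphism agreeing at a point agree near it; everything else is routine point-set topology. No global hyperbolicity or conformal geometry is needed here — this is the abstract $(G,X)$-structure lemma, applied later with $U, V$ taken to be interiors of diamonds and $D(U) \cap D(V)$ shown to be connected by the geometry of $\eeu$.
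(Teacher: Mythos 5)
Your proof is correct: the clopen/connectedness argument on the coincidence set of $\varphi=(D|_V)^{-1}\circ(D|_U)$ over $W=D(U)\cap D(V)$ is the standard proof of this lemma, and every step (openness and closedness of the fixed-point set, non-emptiness via a point of $U\cap V$, and the final identification $x=y$) is sound. The paper itself does not prove the statement but only cites \cite[Chap.~4, Sec.~1, Lemme~4.1.4]{Smai2022}, so your argument supplies exactly the expected proof rather than deviating from one.
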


\begin{proof}
See e.g. \cite[Chap. 4, Sec. 1, Lemme 4.1.4]{Smai2022}.
\end{proof}

\begin{remark}
Lemma \ref{lemme des assiettes} is valid for any local homeomorphism $f: X \to Y$ between two manifolds $X$ and $Y$.
\end{remark}

\begin{proof}[Proof of Proposition \ref{prop: injectivity on the future of p}]
Let $x, x' \in I^-(p)$ such that $D(x) = D(x')$. 

There exist $q, q' \in I^-(p)$ such that $x \in I(p,q)$ and $x' \in I(p,q')$. We prove that the union of $I(p,q)$ and $I(p,q')$ is a domain of injectivity. By Lemma \ref{lemme des assiettes}, this reduces to prove that the intersection of $D(I(p,q))$ and $D(I(p,q'))$ in $\eeu$ is connected: According to Corollary \ref{cor: D is injective on interior of diamonds}, $D(I(p,q)) = I(D(p), D(q))$ and $D(I(p,q')) = I(D(p), D(q))$. In particular, $D(I(p,q))$ and $D(I(p,q'))$ are contained in $I^-(D(p))$. Since $M$ does not contain conjugate points, they are, in fact, contained in $\Mink_-(D(p))$. More precisely, they correspond to future cones in this affine chart. Thus, their intersection is connected. Therefore, the union of $I(p,q)$ and $I(p,q')$ is a domain of injectivity; and $x = x'$.
\end{proof}

\begin{corollary}\label{cor: D is injective on future and past}
The restriction of the developing map $D$ to the causal future $J^-(p)$ of $p \in M$ is injective.
\end{corollary}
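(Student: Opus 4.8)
The plan is to deduce the injectivity on the causal future $J^-(p)$ from the already-established injectivity on the chronological past $I^-(p)$ (Proposition~\ref{prop: injectivity on the future of p}) — note that despite the statement's phrasing, the relevant set is the past of $p$, since Proposition~\ref{prop: injectivity on the future of p} deals with $I^-(p)$; by the time-symmetry remarks throughout the paper the two formulations are interchangeable, so I will work with $J^-(p)$ and $I^-(p)$ consistently. The key observation is that every point of $J^-(p)$ lies in $I^-(p')$ for some $p' \in I^+(p)$, hence $J^-(p) \subset I^-(p')$, and Proposition~\ref{prop: injectivity on the future of p} applies to $I^-(p')$. More precisely, fix any $p' \in I^+(p)$; then for $x \in J^-(p)$ we have $x \in J^-(p) \subset I^-(p')$ because $x \le p \ll p'$ gives $x \ll p'$. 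Thus $J^-(p) \subset I^-(p')$, and since $D$ restricted to $I^-(p')$ is injective by Proposition~\ref{prop: injectivity on the future of p}, its further restriction to $J^-(p)$ is injective.

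Concretely, the steps are as follows. First, I would note that $I^+(p)$ is non-empty: in any spacetime every point has points in its chronological future (locally one moves along a timelike curve), so pick $p' \in I^+(p)$. Second, I would verify the inclusion $J^-(p) \subseteq I^-(p')$: for $x \in J^-(p)$, there is a past-directed causal curve from $p$ to $x$, and concatenating with a past-directed timelike curve from $p'$ to $p$ (which exists since $p \in I^-(p')$) yields a past-directed causal curve from $p'$ to $x$ that is timelike on an initial segment, hence $x \in I^-(p')$; this is the standard push-up property of causality. Third, I would invoke Proposition~\ref{prop: injectivity on the future of p} applied to the point $p'$: $D$ is injective on $I^-(p')$. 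Fourth, since $J^-(p) \subseteq I^-(p')$, the restriction $D|_{J^-(p)}$ is a restriction of the injective map $D|_{I^-(p')}$, hence injective. This completes the argument.

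There is essentially no serious obstacle here; the corollary is a soft consequence of the proposition combined with the elementary causal fact that $J^-(p) \subseteq I^-(p')$ whenever $p' \in I^+(p)$. The only point requiring a modicum of care is the push-up property used in the inclusion, but this is entirely standard in causality theory (see \cite[Chap. 14]{oneill}) and needs no conformally-flat input. One could alternatively argue by exhausting $J^-(p)$ directly as a union of closed diamonds $J(p,q)$ and applying Lemma~\ref{lemma: D is injective on diamonds} together with the \emph{Lemme des assiettes} \ref{lemme des assiettes} as in the proof of Proposition~\ref{prop: injectivity on the future of p}, but routing through $I^-(p')$ is shorter and avoids re-verifying the connectedness of intersections. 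Hence I would present the short version: enlarge slightly to a chronological past and quote the proposition.
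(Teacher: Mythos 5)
Your proof is correct, but it follows a genuinely different route from the paper's. You enlarge $J^-(p)$ into the chronological past $I^-(p')$ of a point $p' \in I^+(p)$ via the standard push-up property ($x \le p \ll p'$ implies $x \ll p'$), and then quote Proposition~\ref{prop: injectivity on the future of p} applied at $p'$; injectivity on a subset of a domain of injectivity is automatic. The paper instead argues directly at $p$: writing $J^-(p) = I^-(p) \cup \partial I^-(p)$, it assumes $D(x)=D(y)$ with $x \neq y$ and runs a case analysis on whether one or both points lie on the lightcone $\partial I^-(p)$, deriving a contradiction in each case from the no-conjugate-points hypothesis (either $D(x)$ would have to lie simultaneously on and strictly inside $\partial I^-(D(p))$, or two distinct lightlike geodesics from $D(p)$ would meet again, making $D(p)$ and $D(x)$ conjugate). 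Your argument is shorter and hides the use of the no-conjugate-points assumption inside the proposition you invoke, at the cost of introducing an auxiliary point $p'$ (whose existence is harmless, since $I^+(p)\neq\emptyset$ in any spacetime). The paper's argument is longer but yields extra information that your version does not: it describes how $D$ behaves on the lightcone $\partial I^-(p)$ itself (lightlike generators map to lightlike generators without conjugate points), which is the kind of boundary control reused later in Section~\ref{sec: maximality of IPs}. Both proofs are valid; yours is the softer deduction.
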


\begin{proof}
Let $x, y \in J^+(p)$ such that $D(x) = D(y)$. Suppose that $x \neq y$. Then, by Proposition \ref{prop: injectivity on the future of p}, either $x \in \partial I^-(p)$ and $y \in I^-(p)$ (or symetrically, $y \in \partial I^-(p)$ and $x \in I^-(p)$), or $x, y \in \partial I^-(p)$.

Suppose that $x \in \partial I^-(p)$ and $y \in I^-(p)$. Then, $p$ and $x$ are connected by a past lightlike geodesic $\varphi$. Since $D$ is conformal, $D(\varphi)$ is a past lightlike geodesic connecting $D(p)$ to $D(x)$. Moreover, $D(\varphi)$ does not contain conjugate points, otherwise $M$ would contain conjugate points, which would contradict our assumption. Hence, $D(x) \in \partial I^-(D(p))$. However, $D(x) = D(y) \in I^-(D(p))$. Contradiction.

Then, $x, y \in \partial I^-(p)$. Hence, there exist two past lightlike geodesics $\alpha$ and $\beta$, connecting $p$ to $x$ and $y$ respectively. Since $x \neq y$, these two geodesics are tangent to two distinct isotropic vectors at $p$. Given that the developing map is a local diffeomorphism, it follows that $D(\alpha)$ and $D(\beta)$ are two distinct past lightlike geodesics connecting $D(p)$ to $D(x) = D(y)$. This shows that $D(p)$ and $D(x)$ are conjugate. Contradiction. 

Therefore, $x = y$. The corollary follows.
\end{proof}

\begin{corollary}
The restriction of the developing map $D$ to any TIP of $M$ is injective. 
\end{corollary}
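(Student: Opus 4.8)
The plan is to mimic the structure of the proof of Proposition~\ref{prop: injectivity on the future of p}, but with diamonds sharing a common \emph{past} vertex replaced by the hierarchy of diamonds exhausting a TIP. Recall that a TIP of $M$ is the chronological past $I^-(\gamma)$ of a future-inextensible causal curve $\gamma : [a,b[ \to M$. First I would choose an increasing sequence of parameters $t_1 < t_2 < \cdots$ converging to $b$ and a point $q \in I^-(\gamma(a))$, and write $p_k := \gamma(t_k)$; then $I^-(\gamma) = \bigcup_k I(p_k, q)$ is an increasing union of open diamonds of $M$ (here one uses that $\gamma$ is causal so the $p_k$ are causally increasing and $q$ stays in everybody's past). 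Each $I(p_k, q)$ is a domain of injectivity by Corollary~\ref{cor: D is injective on interior of diamonds}, and the union is increasing, so it suffices to prove injectivity on $I^-(\gamma)$ directly: given $x, x' \in I^-(\gamma)$ with $D(x) = D(x')$, pick $k$ large enough that both $x$ and $x'$ lie in $I(p_k, q)$, and conclude $x = x'$ from injectivity on that single diamond.

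The only subtle point is the very first one: that $I^-(\gamma)$ is genuinely the union of diamonds of the form $I(\gamma(t_k), q)$ for a \emph{fixed} past point $q$. In general $I^-(\gamma) = \bigcup_{t} I^-(\gamma(t))$, and each $I^-(\gamma(t))$ is in turn the union of the $I(\gamma(t), q')$ over $q' \in I^-(\gamma(t))$; so a priori one needs to let both vertices vary. To reduce to a fixed $q$, I would argue as follows: given $x \in I^-(\gamma)$, there is some $t$ with $x \in I^-(\gamma(t))$, i.e. $x \ll \gamma(t)$; choosing $t_k > t$ we get $x \ll \gamma(t) \ll \gamma(t_k)$ if $\gamma$ is timelike, or at least $x \ll \gamma(t) \le \gamma(t_k)$ with $x \ll \gamma(t_k)$ for a general causal curve. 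Now I still need $x \gg q$ for the \emph{same} $q$. If $q$ is chosen in $I^-(x_0)$ for one fixed reference point $x_0 \in I^-(\gamma)$, this need not hold for all $x$. The cleanest fix is to observe that we do not actually need a single global $q$: it is enough that for \emph{each pair} $x, x'$ there is one diamond containing both. So, given $x, x' \in I^-(\gamma)$, pick $t$ large with $x, x' \ll \gamma(t)$ (possible since $\gamma$ is future-inextensible, so its past is "directed" upward along the curve — concretely, if $x \ll \gamma(t_1)$ and $x' \ll \gamma(t_2)$, take $t = \max(t_1, t_2)$ and use that $\gamma(t_1), \gamma(t_2) \in J^-(\gamma(t))$), and pick $q \ll x$, $q \ll x'$ with $q \in I^-(\gamma(t))$ — for instance any $q$ in the nonempty open set $I^-(x) \cap I^-(x')$ also lies in $I^-(\gamma(t))$. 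Then $x, x' \in I(\gamma(t), q)$, and Corollary~\ref{cor: D is injective on interior of diamonds} gives $x = x'$.

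I expect the main obstacle to be purely bookkeeping about causal curves: verifying carefully that the past of a future-inextensible causal curve is "upward directed" in the sense used above (that any two points below the curve lie simultaneously below some single point of the curve), which rests on global hyperbolicity of $M$ and the definition of $I^-(\gamma)$, and making sure the interior/boundary distinction (chronological versus causal past) is handled — e.g. whether $\gamma$ is timelike or merely causal affects whether $x \ll \gamma(t)$ or only $x \prec \gamma(t)$, and one should pass to $I^-$ throughout. Once that is in place, the analytic content is entirely inherited from the diamond case, with no new use of the Einstein universe geometry or of the absence of conjugate points beyond what Corollary~\ref{cor: D is injective on interior of diamonds} already encapsulates. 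Finally, by symmetry (reversing the time-orientation) the same argument shows the developing map is injective on every TIF, so combined with Proposition~\ref{prop: injectivity on the future of p} and Corollary~\ref{cor: D is injective on future and past} one obtains injectivity of $D$ on every IP/IF of $M$.
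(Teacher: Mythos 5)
There is a genuine gap, and it is exactly at the point you flagged as "the only subtle point". Your reduction to a single diamond per pair requires, for arbitrary $x, x' \in I^-(\gamma)$, a common past point $q \in I^-(x) \cap I^-(x')$; you assert this intersection is a "nonempty open set", but in a general globally hyperbolic spacetime it can be empty. Concretely, take $M = \{t > 0\} \subset \M^n$ (a causally convex open subset of Minkowski spacetime, hence a developable GH conformally flat spacetime without conjugate points) and the future-inextensible null curve $\gamma(s) = (s, s, 0, \ldots, 0)$; the TIP $U = I^-(\gamma)$ contains the points $x = (1, 10, 0, \ldots)$ and $x' = (1, 100, 0, \ldots)$, whose common chronological past in $\M^n$ lies entirely in $\{t \le -44\}$ and is therefore disjoint from $M$. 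So no diamond of $M$ contains both $x$ and $x'$, and the single-diamond argument cannot get started. (Your first-paragraph claim that $I^-(\gamma) = \bigcup_k I(\gamma(t_k), q)$ for one fixed $q$ fails for the same reason: it would force $I^-(\gamma) \subset I^+(q)$.) The "upward" half of your directedness claim — that any two points of $I^-(\gamma)$ lie below a common $\gamma(t)$ — is correct and well argued; it is the "downward" half that is false.

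The repair is not to look for a bigger diamond but to use the intermediate domains of injectivity already established: a TIP is the \emph{increasing} union $\bigcup_k I^-(\gamma(t_k))$ of chronological pasts of points (this only uses the upward directedness you proved), each $I^-(\gamma(t_k))$ is a domain of injectivity by Proposition~\ref{prop: injectivity on the future of p}, and for a nested union no further compatibility check is needed: given $D(x) = D(x')$, both points lie in a single $I^-(\gamma(t_k))$, whence $x = x'$. This is the paper's (one-line) proof. Note that the genuine geometric content — handling two diamonds $I(p,q)$, $I(p,q')$ that share only their future vertex, via the Lemme des assiettes and the fact that their images are two future cones in the common affine chart $\Mink_-(D(p))$, so that their intersection is connected — lives inside Proposition~\ref{prop: injectivity on the future of p} and cannot be bypassed by Corollary~\ref{cor: D is injective on interior of diamonds} alone.
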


\begin{proof}
Since any TIP of $M$ is an increasing union of chronological past of points of $M$, the corollary follows immediately from Proposition \ref{prop: injectivity on the future of p}.
\end{proof}

\begin{remark}
By symmetry, Proposition \ref{prop: injectivity on the future of p} stays true when considering the chronological past of $p$. Consequently, the developing map is injective on the causal past of any point in $M$ and on any TIF of $M$.
\end{remark}

\begin{lemma}\label{lemma: the image of the future is causally convex in ein}
The image of the chronological past of a point $p \in M$ under the developing map $D$ is causally convex in $\eeu$.
\end{lemma}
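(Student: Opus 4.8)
The plan is to show that $D(I^-(p))$ contains every diamond of $\eeu$ whose endpoints lie in $D(I^-(p))$, which is exactly causal convexity. First I would set $\Omega := I^-(p)$ and recall from Proposition \ref{prop: injectivity on the future of p} and Corollary \ref{cor: D is injective on future and past} that $D$ restricts to a homeomorphism of $J^-(p)$ (hence of $I^-(p)$) onto its image. In particular, $D(\Omega)$ is an open subset of $\eeu$ and, since $M$ has no conjugate points, $D(I(p,q)) = I(D(p),D(q))$ is contained in the affine chart $\Mink_-(D(p))$ for every $q \in I^-(p)$ (this is the content of the proof of Proposition \ref{prop: injectivity on the future of p}); consequently $D(\Omega)$ itself sits inside $\Mink_-(D(p))$, where we may reason with the flat/convex structure of Minkowski spacetime.

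Next I would take two points $\bar a, \bar b \in D(\Omega)$ with $\bar a \in J^+(\bar b)$ in $\eeu$, pick $a, b \in \Omega$ with $D(a) = \bar a$, $D(b) = \bar b$, and aim to show $J(\bar a, \bar b) \subset D(\Omega)$. The key observation is that since $\bar a, \bar b \in \Mink_-(D(p))$ and are causally related there, the diamond $J(\bar a, \bar b)$ computed in $\eeu$ coincides with the diamond computed in the affine chart $\Mink_-(D(p))$, which is an honest convex Minkowskian diamond. Now I would lift: choose $q \in I^-(p)$ large enough (i.e.\ close enough to $p$) that both $a$ and $b$ lie in $I(p,q)$ — this is possible because $I^-(p) = \bigcup_{q} I(p,q)$ is a directed union. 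By Corollary \ref{cor: D is injective on interior of diamonds}, $D(I(p,q)) = I(D(p),D(q))$, an open diamond of $\eeu$ containing both $\bar a$ and $\bar b$. Since this diamond is (again, being inside the affine chart $\Mink_-(D(p))$) convex as a subset of Minkowski spacetime, it contains the whole diamond $J(\bar a,\bar b)$. Therefore $J(\bar a, \bar b) \subset I(D(p),D(q)) = D(I(p,q)) \subset D(\Omega)$, which is what we wanted.

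The main obstacle, and the point needing genuine care, is the claim that causal relations and diamonds in $\eeu$ agree with those in the affine chart $\Mink_-(D(p))$ for the points at hand: a priori two points of an affine chart of $\eeu$ can be causally related in $\eeu$ by a causal curve that exits the chart. This is controlled precisely because $D(p)$ lies to the future of the whole region: any causal curve in $\eeu$ from a point of $I^-(D(p)) = \Mink_+(D(p))$-type region staying below $D(p)$ cannot wrap around, and in fact $I(D(p),D(q)) \subset \Mink_-(D(p))$ exactly says the relevant diamonds are trapped in a single affine chart. So the argument reduces to the bookkeeping lemma that, for $\bar a, \bar b$ in an affine chart $\Mink_-(D(p))$ with $\bar a \in I^+(\bar b)$ there, one has $J_{\eeu}(\bar a, \bar b) = J_{\Mink_-(D(p))}(\bar a, \bar b)$, which follows from the description of the causal structure of $\eeu$ in Section \ref{sec: universal Einstein} (the lightcone of a point disconnects $\eeu$) together with the fact that $I(D(p),D(q))$ is already known to be contained in $\Mink_-(D(p))$. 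Once that is in hand, convexity of Minkowskian diamonds closes the argument with no further computation.
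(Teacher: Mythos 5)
Your overall strategy is sound and genuinely different from the paper's: the paper lifts the two causally related points to $x,y\in I^-(p)$, asserts they are causally related in $M$, and pushes the compact diamond $J(x,y)\subset I^-(p)$ forward via Lemma \ref{lemma: D is injective on diamonds}, whereas you stay downstairs in the affine chart and try to trap $J(\bar a,\bar b)$ inside a single developed diamond $I(D(p),D(q))$. However, the step you use to produce that single diamond is false as stated: $I^-(p)=\bigcup_{q\in I^-(p)} I(p,q)$ is \emph{not} a directed union in a general globally hyperbolic $M$. Directedness would require $I^-(a)\cap I^-(b)\neq\emptyset$ for every pair $a,b\in I^-(p)$, and this already fails when $M$ is itself an open diamond $I(r,s)$ of Minkowski spacetime: for $a,b$ near two far-apart points of the corner sphere $\partial I^-(p)\cap\partial I^+(s)$, the pasts of $a$ and $b$ computed in $M$ are disjoint, so no $q$ lies below both. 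Hence ``choose $q$ so that both $a$ and $b$ lie in $I(p,q)$'' (you say ``close enough to $p$'', but you mean deep enough in the past) is unjustified for an arbitrary pair, and this is the load-bearing step of your argument.

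The gap is repairable without leaving your route, because you only need the claim for pairs whose \emph{images} are causally related, and you do not actually need $a\in I(p,q)$. Suppose $\bar a\in J^+(\bar b)$ and pick any $q\in I^-(b)$, where $b$ is the lift of the past point $\bar b$. Then $\bar b\in I^+(D(q))$ and $\bar a\in D(I^-(p))\subset I^-(D(p))$, so
\begin{align*}
J(\bar a,\bar b)=J^-(\bar a)\cap J^+(\bar b)\subset I^-(D(p))\cap I^+(D(q))=I(D(p),D(q))=D(I(p,q))\subset D(I^-(p)),
\end{align*}
the last equality being Corollary \ref{cor: D is injective on interior of diamonds}. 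This runs entirely in $\eeu$ and makes your ``bookkeeping lemma'' comparing diamonds of $\eeu$ with diamonds of the chart unnecessary. It is worth noting that the paper's own proof quietly relies on a cousin of the same fact (that $\bar x\in I^+(\bar y)$ forces the lifts to satisfy $x\in I^+(y)$ in $M$), which is justified by essentially this computation; so once your one faulty step is replaced, your proof is not only correct but arguably more self-contained than the one in the paper.
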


\begin{proof}
Let $\bar{x}, \bar{y}$ be two causally-related points in $D(I^-(p))$. Suppose for instance that $\bar{x} \in I^+(\bar{y})$. By Proposition \ref{prop: injectivity on the future of p}, there exists $x, y \in I^-(p)$ such that $x \in I^+(y)$ and whose images under $D$ are $\bar{x}$ and $\bar{y}$ respectively. Then, by Lemma \ref{lemma: D is injective on diamonds}, we obtain that $J(\bar{x}, \bar{y}) = D(J(x,y)) \subset D(I^-(p))$. The lemma follows.
\end{proof}

\begin{remark}
By symmetry, the image under the developing map of the chronological past of any point of $M$ is causally convex.
\end{remark} 

It follows immediately from Lemma \ref{lemma: the image of the future is causally convex in ein} that the image under $D$ of any TIP (TIF) of $M$ is also causally convex in $\eeu$.\\

To summarize, we have proved the following result in this section:

\begin{proposition}
The restriction of the developing map to any IP/IF of $M$ is injective. Moreover, the image is causally convex in $\eeu$. \qed
\end{proposition}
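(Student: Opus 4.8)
The statement bundles together everything established in this section, so the plan is essentially to assemble the pieces. First I would recall that by Proposition \ref{prop: injectivity on the future of p} and the symmetric remark, the developing map $D$ is injective on $I^+(p)$ and on $I^-(p)$ for every $p \in M$; and by Corollary \ref{cor: D is injective on future and past} and its symmetric counterpart, $D$ is injective on $J^+(p)$ and $J^-(p)$ as well. This handles the PIPs and PIFs. For the TIPs (resp. TIFs), I would invoke the corollary stating that $D$ restricted to any TIP is injective, which is obtained by writing a TIP as an increasing union $\bigcup_k I^-(p_k)$ of chronological pasts of points along a future-inextensible causal curve $\gamma$, with $p_k = \gamma(t_k)$ and $t_k \nearrow b$; since $D$ is injective on each $I^-(p_k)$ and these form a nested family, injectivity passes to the union. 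The symmetric remark covers the TIFs. Since by Kronheimer--Penrose--Geroch every IP is either a PIP or a TIP (and every IF either a PIF or a TIF), this exhausts all cases, proving the injectivity half.

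For the second half, that the image is causally convex in $\eeu$: for a PIP $I^-(p)$ this is exactly Lemma \ref{lemma: the image of the future is causally convex in ein}, and for a PIF $I^+(p)$ it is the symmetric remark following that lemma. For a TIP $U = \bigcup_k I^-(p_k)$, I would argue that $D(U) = \bigcup_k D(I^-(p_k))$ is causally convex because it is an increasing union of causally convex sets: given two causally related points $\bar x \in I^+(\bar y)$ in $D(U)$, both lie in some common $D(I^-(p_k))$ (using that the $I^-(p_k)$ are nested, so their images are too), and causal convexity of that single image places the whole diamond $J(\bar x, \bar y)$ inside $D(I^-(p_k)) \subset D(U)$. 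The same works for TIFs by symmetry. This is precisely the remark already made in the text (``it follows immediately from Lemma \ref{lemma: the image of the future is causally convex in ein} that the image under $D$ of any TIP (TIF) of $M$ is also causally convex''), so nothing new is needed.

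The only point requiring a little care — and the mild obstacle worth spelling out — is the passage from a finite to an infinite union in both the injectivity and the causal-convexity arguments for TIPs/TIFs. One must check that an increasing union of domains of injectivity (resp. of causally convex sets) whose images also form an increasing chain is again a domain of injectivity (resp. causally convex). For injectivity: if $D(x) = D(x')$ with $x, x' \in U = \bigcup_k U_k$ and the $U_k$ increasing, then both $x, x'$ lie in some $U_k$ (take the larger of the two indices), and injectivity of $D|_{U_k}$ forces $x = x'$. For causal convexity of the image: any two causally related points of $D(U)$ lie in a common $D(U_k)$ since the $D(U_k)$ are increasing, and then the diamond between them lies in $D(U_k) \subset D(U)$. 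Both are routine once one notes that the defining family is a chain; the substance of the argument is entirely carried by the already-proved statements for the pieces, so the proposition follows.
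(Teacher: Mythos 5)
Your proposal is correct and follows essentially the same route as the paper: the proposition is stated there as a summary of the section, with injectivity on PIPs/PIFs given by Proposition \ref{prop: injectivity on the future of p}, injectivity on TIPs/TIFs by the increasing-union-of-pasts argument, and causal convexity of the images by Lemma \ref{lemma: the image of the future is causally convex in ein} together with the same nested-union observation. Your explicit check that injectivity and causal convexity pass to an increasing union is exactly the (implicit) content of the paper's corollary and closing remark, so nothing is missing.
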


\section{Maximality of IPs/IFs} \label{sec: maximality of IPs}

Let $M$ be a non-elliptic developable globally hyperbolic maximal conformally flat spacetime of dimension $n \geq 3$. Consider a developing map $D$ from $M$ to $\eeu$. We devote this section to the proof of the following result.

\begin{theorem} \label{thm: maximality of IPs}
Any IP of $M$ is conformally equivalent to a future-regular domain of Minkowski spacetime.
\end{theorem}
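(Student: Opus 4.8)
The plan is to reduce the statement to Fact \ref{fact intro} together with the injectivity results of Section \ref{sec: future of points}. Let $P$ be an IP of $M$, which by the discussion in Section \ref{sec: future of points} is either the chronological past $I^-(p)$ of a point $p\in M$ or a TIP. First I would record that $P$ is a causally convex open subset of $M$: this is immediate for $I^-(p)$, and a TIP is an increasing union of such pasts, hence also causally convex. Since $M$ is globally hyperbolic, $P$ is itself a globally hyperbolic conformally flat spacetime. By the results of Section \ref{sec: future of points}, the developing map $D$ is injective on $P$ and $D(P)$ is a causally convex open subset of $\eeu$; moreover, because $M$ has no conjugate points, $D(P)$ contains no pair of conjugate points, so after a conformal identification $D(P)$ lies inside an affine chart $\Mink_-(D(p))$ (in the PIP case) or, for a TIP, inside an increasing union of such charts. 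In either case $D(P)$ is conformally a \emph{future-convex} open subset of a Minkowski affine chart: indeed, as shown in the proof of Proposition \ref{prop: injectivity on the future of p}, the images of the diamonds $I(p,q)$ exhausting $I^-(p)$ are future cones in $\Mink_-(D(p))$, and their union is by construction a future-convex domain there, i.e. an intersection of strict future half-spaces bounded by degenerate hyperplanes — a future-regular domain in the sense of Definition \ref{def: regular domains}. So $P$ is conformally equivalent to a future-regular domain $\Omega$ of Minkowski spacetime \emph{as an abstract conformally flat spacetime}; what remains is to see that this identification respects Cauchy hypersurfaces, i.e. that the \emph{maximal} extension of $P$ is exactly $\Omega$ (which is maximal, being a regular domain), not something strictly larger.

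This is where I would invoke maximality of $M$. Apply Fact \ref{fact intro} with $V = M$ and $U = P$: since $P$ is causally convex in $M$ and $M$ is already maximal (so its maximal extension is $M$ itself), the maximal extension $P^{\max}$ of $P$ is conformally equivalent to a causally convex open subset of $M$ — call it $\hat P$ — containing the image of $P$. Now $\hat P$ is a causally convex open subset of $M$ that Cauchy-contains $P$, so $D$ is injective on $\hat P$ as well? Not directly — $\hat P$ need not be an IP. The cleaner route is: $D$ restricted to $\hat P$ is still injective and conformal onto a causally convex domain of $\eeu$ (a causally convex open subset of a GH conformally flat spacetime on which $D$ is injective has causally convex image by the argument of Lemma \ref{lemma: the image of the future is causally convex in ein}, which only used causal convexity), and $D(\hat P)\supset D(P)$. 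Since $\Omega \cong P$ sits inside its maximal extension $\cong \hat P$ as a Cauchy-dense subset, and $\Omega$ is \emph{already} maximal, the inclusion $\Omega \hookrightarrow \hat P$ must be a conformal Cauchy-embedding that is surjective — forcing $\hat P \cong \Omega$ and hence $P^{\max}\cong\Omega\cong P$. Therefore $P$ is maximal, and it is conformally a future-regular domain, which is the assertion.

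There is one gap to fill carefully: the claim that the image $D(P)$, realized inside a Minkowski affine chart, is precisely a future-regular domain — equivalently, that it equals $\Omega^+(\Lambda)$ for $\Lambda$ the closure of the shadow of $D(P)^c$ on $\mathcal{J}^+(D(p))$, and that this $\Lambda$ satisfies Corollary \ref{cor: criterion regular} so that $\Omega^+(\Lambda)$ is nonempty and open. I would argue this by describing $\partial D(P)$ inside the chart: every boundary point of $I^-(p)$ lies on a past lightlike geodesic issued from $p$ (or a limit of such, in the TIP case), whose image under $D$ is a complete lightlike geodesic of the chart, i.e. lies on a degenerate hyperplane; hence $D(P)$ is contained in the intersection of the corresponding strict future half-spaces, and the reverse inclusion follows from the exhaustion by diamonds together with Corollary \ref{cor: D is injective on interior of diamonds}. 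The boundedness-from-above of $\Lambda$ needed for Corollary \ref{cor: criterion regular} comes from the fact that $D(P)$ is nonempty and future-convex, so $\Lambda$ cannot be all of $\mathcal{J}^+(D(p))$.

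The main obstacle I anticipate is the bookkeeping around Cauchy hypersurfaces in the TIP case: for a TIP $P$ one must check that the abstract conformal identification $P\cong\Omega$ sends Cauchy hypersurfaces of $P$ to Cauchy hypersurfaces of $\Omega$ (so that ``maximal as a GH conformally flat spacetime'' genuinely transfers), and that Fact \ref{fact intro} applies to $U=P$ even when $P$ is not the past of a point — which it does, since the only hypothesis there is causal convexity, and TIPs are causally convex. Once that is in place, the combination ``injective developing map + causally convex image + $M$ maximal + Fact \ref{fact intro} + regular domains are maximal'' closes the argument uniformly for PIPs and TIPs at once.
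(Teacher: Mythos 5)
Your scaffolding matches the paper's in several places (causal convexity of the IP, injectivity of $D$ on it, containment of the image in an affine chart, and an appeal to Fact \ref{fact intro} with $V=M$), but the argument has a genuine gap at the decisive step. You assert that $D(P)$, being the union of the future cones $I(D(p),D(q))$ in the affine chart $\Mink_-(D(p))$, is ``by construction'' an intersection of strict future half-spaces bounded by degenerate hyperplanes, hence already a future-regular domain. A union of future cones is future-complete but need not be convex: for two spacelike-separated points $a,b$ of Minkowski, $I^+(a)\cup I^+(b)$ is open, future-complete and even causally convex, yet it is not convex and not an intersection of half-spaces. So the convexity/regularity of $D(P)$ is exactly the content of the theorem, not a formal consequence of the exhaustion by diamonds; and since your subsequent deduction that $P$ is maximal rests on ``$\Omega$ is already maximal'' with $\Omega=D(P)$, the argument becomes circular at its key point. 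Your closing paragraph does not repair this: the forward inclusion ($D(P)$ is contained in the intersection of the half-spaces determined by $\Lambda$) is easy, but the reverse inclusion is precisely the hard direction, and in the paper it is proved (Proposition \ref{prop: characterization of the past of a point}) \emph{using} Theorem \ref{thm: maximality of IPs}, not as a step towards it.

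The paper's route avoids this circle. It only establishes that $D(U)$ is causally convex and future-complete in the chart (Lemmas \ref{lemma: the image of the future is causally convex in ein} and \ref{lemma: the past of p is contained in an affine chart}); it then applies Fact \ref{fact: max. ext. respect inclusion} with $V=\M^n$ to realize the maximal extension $\hat U$ as a causally convex, future-complete open subset of $\M^n$ containing $U$ \textemdash{} this is where regularity of $\hat U$ comes from; and finally it proves $\hat U\subset U$ directly: for $q\in\hat U$, pick $r\in U$ chronologically related to $q$ via a Cauchy hypersurface of $U$; if $q\in I^+(r)$ in $\M^n$ use future-completeness of $U$, and if $q\in I^-(r)$ apply Fact \ref{fact: max. ext. respect inclusion} with $V=M$ (here maximality of $M$ enters) to view $q,r$ in $M$ and invoke the defining property of an IP, $I^-(r)\subset U$. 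The convexity of $D(U)$ is thus an output, not an input. A secondary issue: for a TIP you only place the image in ``an increasing union of affine charts''; you need Lemma \ref{lemma: image of a TIP} (the developed curve has a future endpoint, by absence of conjugate points) to put the whole image in a single chart $\Mink_-(D^\sharp(U))$ before any talk of regular domains of Minkowski makes sense.
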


This theorem says, in particular, that the IPs of $M$ are maximal as globally hyperbolic conformally flat spacetimes. It was first established by C. Rossi in her thesis \cite[Chap. 6, Sec. 3, Prop 3.6 and Th. 3.9]{salveminithesis}. However, we provide a simple and synthetic proof here. In addition, we prove that the regular domain corresponding to a PIP, i.e. to the chronological past of a point in $M$, satisfies two remarkable properties:
  
\begin{enumerate}
\item it is defined by \emph{an acausal topological $(n-2)$-sphere of the Penrose boundary}; in particular, it is \emph{proper} i.e. it could not be a half space bounded by a degenerate hyperplane;
\item it is "\emph{strictly convex}" in the following sense: it does not contain any spacelike line segment in its boundary.
\end{enumerate}
Of course, we have similar statements for the PIFs by symmetry. It turns out that these two properties are not true for the TIPs and the TIFs.\\

The proof of Theorem \ref{thm: maximality of IPs} relies on the following result, which we established in a previous paper, stating that the maximal extensions of globally hyperbolic conformally flat spacetimes respect inclusion:

\begin{fact}[{\cite[Theorem 2]{smai2023enveloping}}]\label{fact: max. ext. respect inclusion}
Let $V$ be a globally hyperbolic conformally flat spacetime and let $U$ be a causally convex open subset of $V$. Then, the maximal extension of $U$ is conformally equivalent to a causally convex open subset of the maximal extension of $V$.
\end{fact}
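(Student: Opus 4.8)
The plan is to realize the maximal extension of $U$ explicitly as a \emph{Cauchy development} inside the maximal extension $V^{\max}$ of $V$, and then invoke the uniqueness of the maximal extension. Fix a Cauchy hypersurface $\Sigma$ of $U$. Inside $V^{\max}$ this is an acausal spacelike hypersurface, and I set $W := \mathrm{CD}_{V^{\max}}(\Sigma)$, the set of points of $V^{\max}$ through which every inextensible causal curve meets $\Sigma$. The argument then splits into two claims: first, that $W$ is a causally convex open subset of $V^{\max}$ which is a Cauchy-extension of $U$ (so that $U$ and $W$ share the same maximal extension); and second, that $W$ is itself maximal. Granting both, the uniqueness of the maximal extension established by Rossi in \cite{Salvemini2013Maximal} yields $U^{\max}\cong W$, a causally convex open subset of $V^{\max}$, which is exactly the statement.

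For the first claim the key input is the standard fact that a conformal Cauchy-embedding has causally convex image. Applying it to the canonical embedding $V\hookrightarrow V^{\max}$ shows that $V$ is causally convex in $V^{\max}$; since $U$ is causally convex in $V$ by hypothesis and causal convexity is transitive, $U$ is causally convex in $V^{\max}$. This immediately gives that $\Sigma$, being acausal in $U$, stays acausal in $V^{\max}$: a causal curve of $V^{\max}$ joining two points of $\Sigma\subset U$ would remain in $U$, contradicting that $\Sigma$ is Cauchy in $U$. The Cauchy development $W=\mathrm{CD}_{V^{\max}}(\Sigma)$ is then an open globally hyperbolic subset, causally convex in $V^{\max}$, admitting $\Sigma$ as a Cauchy hypersurface. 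A short argument using causal convexity of $U$ shows $U\subseteq W$: for $p\in U$, any inextensible causal curve $\gamma$ of $V^{\max}$ through $p$ meets $U$ in a connected causal arc which, since $U$ is open and $\gamma$ cannot acquire an interior endpoint in $U$, is inextensible in $U$ and hence crosses $\Sigma$. Thus $U\subseteq W$ with $\Sigma$ a Cauchy hypersurface of both, so the inclusion $U\hookrightarrow W$ is a Cauchy-embedding and $U^{\max}=W^{\max}$.

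For the second claim I argue by contradiction: suppose $W$ is not maximal, so that its maximal extension $W^{\max}$ strictly contains $W$ (identifying $W$ with its image), with $\Sigma$ a Cauchy hypersurface of both. Both $V^{\max}$ and $W^{\max}$ now contain $W$ as a causally convex open subset, causal convexity of $W$ in $W^{\max}$ again following from the embedding $W\hookrightarrow W^{\max}$ being a Cauchy-embedding. I then form the amalgam $N:=V^{\max}\cup_{W}W^{\max}$, gluing the two spacetimes along $W$. Because the conformal structures agree on $W$, this carries a natural conformally flat structure, and because $W^{\max}\setminus W\neq\varnothing$, the space $N$ strictly contains $V^{\max}$. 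If $N$ is a globally hyperbolic conformally flat spacetime in which a Cauchy hypersurface of $V^{\max}$ remains Cauchy, then the inclusion $V^{\max}\hookrightarrow N$ is a non-surjective Cauchy-embedding, contradicting the maximality of $V^{\max}$. Hence $W=W^{\max}$ is maximal, which completes the reduction.

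The main obstacle is precisely this gluing step: one must check that amalgamating two globally hyperbolic conformally flat spacetimes along a common causally convex open subset carrying a shared Cauchy hypersurface produces a \emph{Hausdorff} manifold that is again globally hyperbolic, and that the Cauchy hypersurfaces of the two pieces extend to Cauchy hypersurfaces of the amalgam. Causal convexity of the gluing region is what rules out the usual non-Hausdorff pathologies and bifurcating causal curves, but verifying global hyperbolicity and the persistence of Cauchy hypersurfaces of the amalgam is the genuine technical heart of the proof — exactly the content encapsulated in the constructive maximal-extension machinery of \cite{smai2023enveloping}.
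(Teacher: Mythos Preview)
The paper does not prove this statement: it is quoted as a Fact from \cite{smai2023enveloping} (also appearing as Fact~\ref{fact intro} in the introduction) and is used as a black box in the proof of Theorem~\ref{thm: maximality of IPs}. There is therefore no ``paper's proof'' to compare your attempt against.

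That said, your outline is a reasonable sketch of how such a statement is established, and is in the spirit of the constructive approach advertised in \cite{smai2023enveloping}: realize the candidate as the Cauchy development of a Cauchy hypersurface of $U$ inside $V^{\max}$, then show maximality by an amalgamation/gluing argument. Your Claim~1 is essentially complete; the transitivity of causal convexity, the persistence of acausality of $\Sigma$ in $V^{\max}$, and the inclusion $U\subseteq W$ all go through as you indicate.

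Where your write-up is not yet a proof is exactly where you say it is: the gluing step in Claim~2. You need that the pushout $N=V^{\max}\cup_W W^{\max}$ is a Hausdorff manifold, that it is globally hyperbolic, and that a Cauchy hypersurface of $V^{\max}$ remains Cauchy in $N$. Causal convexity of $W$ in both factors is indeed the relevant hypothesis, but it does not by itself rule out non-Hausdorff points on $\partial W$; one must analyse sequences in $W$ with distinct limits in $V^{\max}$ and in $W^{\max}$ and use the causal structure to derive a contradiction. Since you defer this step back to \cite{smai2023enveloping}, your argument is, as it stands, circular with respect to the statement being cited. If you want a self-contained proof, this gluing lemma is precisely what you must supply.
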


\subsection{Proof of Theorem \ref{thm: maximality of IPs}}

Let us start with the following observation.

\begin{lemma}\label{lemma: the past of p is contained in an affine chart}
Let $U \subset M$ be an IP. The image under $D$ of $U$ is contained in the affine chart $\Mink_-(D^\sharp(U))$. Moreover, it is future-complete in this affine chart.
\end{lemma}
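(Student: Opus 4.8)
The plan is to realize any IP $U$ as $U = I^-(\gamma)$ for a future-directed timelike curve $\gamma\colon I \to M$ (the Geroch--Kronheimer--Penrose description, recalled in Section~\ref{sec: causal completion}), with $\gamma$ ending at a point $p$ when $U$ is a PIP and future-inextensible when $U$ is a TIP. Put $b := \sup I$ and $w := D^\sharp(U)$. I would first observe that $w = \lim_{t \to b} D(\gamma(t))$: this limit is $D(p)$ in the PIP case (under the identification of the PIP $I^-(p)$ with $p\in M\subset M^\sharp$), and in the TIP case it is the future endpoint $p_+$ of the causal curve $D \circ \gamma$ in $\eeu$, which exists because $M$ has no conjugate points (Lemma~\ref{lemma: image of a TIP}) and which is $\hat{\mathbf{D}}(U) = D^\sharp(U)$ by construction of the extended developing map. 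Since $D$ carries timelike curves to timelike curves preserving time-orientation (Lemma~\ref{lemma: dev sends causal curve on causal curve}), each $D(\gamma(t)) \ll w$; hence $D(I^-(\gamma(t))) \subseteq I^-(D(\gamma(t))) \subseteq I^-(w)$, and taking the union over $t \in I$ gives $D(U) \subseteq I^-(w)$. Moreover $I^-(D(U)) = \hat{D}(U) = I^-(w)$ by Lemmas~\ref{lemma: image of a PIP} and~\ref{lemma: image of a TIP}.

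The heart of the argument is to promote $D(U) \subseteq I^-(w)$ to $D(U) \subseteq \Mink_-(w) = \Mink_0(\sigma^{-1}(w))$, i.e. to show that $D(U)$ avoids both $J^+(\sigma^{-1}(w))$ and $J^-(\sigma^{-1}(w))$. The points $w$ and $\sigma^{-1}(w)$ are conjugate, hence lie on each other's lightcones, so $\sigma^{-1}(w) \notin I^-(w)$. If some $x \in D(U)$ had $\sigma^{-1}(w) \le x$, then together with $x \ll w$ this would force $\sigma^{-1}(w) \ll w$, a contradiction; so $D(U) \cap J^+(\sigma^{-1}(w)) = \emptyset$. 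For the other cone, note that $D(U)$ is open and $J^-(\sigma^{-1}(w)) = \overline{I^-(\sigma^{-1}(w))}$, so if $D(U)$ met $J^-(\sigma^{-1}(w))$ there would be $x \in D(U)$ with $x \ll \sigma^{-1}(w)$, whence $\sigma(x) \ll w$. Since $I^-(D(U)) = I^-(w)$, pick $y \in D(U)$ with $\sigma(x) \ll y$; as $x \le \sigma(x) \ll y$, causal convexity of $D(U)$ (established in Section~\ref{sec: future of points}) forces $J(y,x) \subseteq D(U)$. But $\sigma(x) \ll y$ says $x \ll \sigma^{-1}(y)$, so choosing $c$ with $x \ll c \ll \sigma^{-1}(y)$ places the conjugate pair $c, \sigma(c)$ inside $I(y,x) \subseteq D(U)$; injectivity of $D$ on $U$ then produces a pair of conjugate points in $M$, contradicting our hypothesis. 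This yields $D(U) \subseteq \Mink_-(w)$.

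For future-completeness inside the affine chart, take $x \in D(U)$ and $x' \in J^+(x, \Mink_-(w))$; since affine charts are causally convex in $\eeu$ and $\Mink_-(w) \subseteq I^-(w) = \bigcup_{t\in I} I^-(D(\gamma(t)))$, we get $x' \ll D(\gamma(t_1))$ for some $t_1 \in I$. Writing $x = D(z)$ with $z \in I^-(\gamma(t_0))$ and choosing $t_* \in I$ with $t_* > \max(t_0, t_1)$, both $x$ and $x'$ lie in the diamond $J(D(\gamma(t_*)), x)$, which by Lemma~\ref{lemma: D is injective on diamonds} is exactly $D(J(\gamma(t_*), z))$; hence $x' = D(z')$ with $z' \le \gamma(t_*)$, and for any $t' \in I$ with $t' > t_*$ we get $z' \le \gamma(t_*) \ll \gamma(t')$, so $z' \in I^-(\gamma) = U$ and $x' \in D(U)$. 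Thus $J^+(D(U),\Mink_-(w)) = D(U)$, i.e. $D(U)$ is future-complete in $\Mink_-(w)$. I expect the second paragraph to be the main obstacle: the delicate point is the exclusion of $J^-(\sigma^{-1}(w))$, where one must combine causal convexity of $D(U)$ with the cofinality relation $I^-(D(U)) = I^-(w)$ to manufacture a conjugate pair inside $D(U)$ and thereby reach a contradiction.
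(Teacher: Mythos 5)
Your proof is correct, but the key step is argued quite differently from the paper. For the containment $D(U)\subset \Mink_-(D^\sharp(U))$, the paper (which writes out only the PIP case $U=I^-(p)$ and declares the TIP case similar) observes that a point $D(q)\in I^-(D(p))\setminus \Mink_-(D(p))$ lies in $J^-(\sigma^{-1}(D(p)))$, so the single diamond $J(D(p),D(q))$ already contains the conjugate pair $D(p),\sigma^{-1}(D(p))$; Lemma \ref{lemma: D is injective on diamonds} then transports this pair back to $J(p,q)\subset M$, a contradiction in one stroke. You instead split the complement of the chart into the two cones of $\sigma^{-1}(w)$ and, for the past cone, manufacture a conjugate pair $c,\sigma(c)$ \emph{inside} $D(U)$ by combining causal convexity of $D(U)$ (Lemma \ref{lemma: the image of the future is causally convex in ein}) with the cofinality $I^-(D(U))=I^-(w)$. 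This is longer, but it buys something real: your argument treats PIPs and TIPs uniformly through the exhaustion $U=I^-(\gamma)$, whereas the paper's diamond trick uses the vertex $D(p)$ as a point of the image and so needs a small (unwritten) adaptation when $w=D^\sharp(U)$ is only an ideal endpoint of $D(\gamma)$. For future-completeness both arguments are essentially the same diamond-image computation; the paper's version, $I^+(D(q),\Mink_-(D(p)))=I(D(p),D(q))=D(I(p,q))\subset D(I^-(p))$ via Corollary \ref{cor: D is injective on interior of diamonds}, is more direct than your detour through $J(D(\gamma(t_*)),x)$, but your version again adapts verbatim to TIPs. All the causal manipulations you invoke (push-up, $J^-=\overline{I^-}$ in the globally hyperbolic $\eeu$, $\sigma$ preserving chronology) are sound.
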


\begin{proof}
We prove the lemma when $U$ is a PIP, i.e. the chronological past of a point $p \in M$. The proof is similar when $U$ is a TIP. 

By Lemma \ref{lemma: dev sends causal curve on causal curve}, $D(I^-(p)) \subset I^-(D(p))$. Suppose there exists $q \in I^-(p)$ such that $D(q) \not \in \Mink_-(D(p))$. Then, $D(q) \in I^-(\sigma^{-1}(D(p)))$.
Consequently, the diamond $J(D(p), D(q))$ contains conjugate points and then, by Lemma \ref{lemma: D is injective on diamonds}, so does the diamond $J(p,q)$. Contradiction. Thus, $D(I^-(p)) \subset \Mink_-(D(p))$. 

Now, we prove that $D(I^-(p))$ is future-complete in $\Mink_-(D(p))$. Let $q \in I^-(p)$. The chronological future of $D(q)$ in $\Mink_-(D(p))$ is $I(D(p), D(q))$. By Corollary \ref{cor: D is injective on interior of diamonds}, we have $I(D(p), D(q)) = D(I(p,q))$. Since $I(p,q) \subset I^-(p)$, we get $D(I(p,q)) \subset D(I^-(p))$. Thus, $D(I^-(p))$ is future-complete in $\Mink_-(D(p))$.
\end{proof}

\begin{proof}[Proof of Theorem \ref{thm: maximality of IPs}]
Throughout this proof, all the identifications are up to conformal diffeomorphism. Let $U \subset M$ be an IP. We call $\hat{U}$ the maximal extension of~$U$. 

By Lemmas \ref{lemma: the past of p is contained in an affine chart} and \ref{lemma: the image of the future is causally convex in ein}, $U$ is identified to a future-complete causally convex open subset of Minkowski spacetime $\M^{n}$. Then by Fact \ref{fact: max. ext. respect inclusion}, $\hat{U}$ is also identified to a causally convex open subset of $\M^{n}$ containing $U$. Since $U$ is future-complete in $\M^n$, we deduce that $\hat{U}$ is also future-complete in $\M^{n}$. Hence, $\hat{U}$ is a future regular domain of $\M^{n}$. 
 
It remains to prove that $\hat{U} \subset U$. Let $q \in \hat{U} \subset \M^n$. Since $\hat{U}$ is a Cauchy extension of $U$, any inextensible timelike curve going through $q$ intersects a Cauchy hypersurface in $U$ in a unique point. Thus, $q$ is chronologically related to some point $r \in U \subset \M^n$. If $q$~is in the chronological future of $r$ in $\M^n$, then $q \in U$ since $U$ is future-complete in $\M^{n}$. Suppose now that $q$ is in the chronological past of $r$ in~$\M^n$. By Fact \ref{fact: max. ext. respect inclusion}, $\hat{U}$ can be seen as a causally convex open subset of $M$ containing $U$. Moreover, according to Lemma \ref{lemma: nc injectivity}, the developing map is injective on $\hat{U}$.  Therefore, $q$ and $r$ can be seen as points of~$M$ such that $q \in I^-(r)$. Since $U$ is an IP, $I^-(r) \subset U$. Thus, $q \in U$. The proposition follows. 
\end{proof}

\subsection{Two remarkable properties of the PIPs}

We devote this section to proving the following two properties of the PIPs of $M$.

\begin{property}\label{prop: no spacelike segment for the PIPs}
The chronological past of a point $p \in M$ is conformally equivalent to a future-complete regular domain of Minkowski spacetime which does not contain any spacelike line segment in its boundary.
\end{property} 

\begin{property}\label{property 2}
The chronological past of a point $p \in M$ is conformally equivalent to a future-complete regular domain of Minkowski spacetime $\M^n$ defined by a topological $(n-2)$-sphere of Penrose boundary.
\end{property}

\subsubsection{Proof of Property \ref{prop: no spacelike segment for the PIPs}}

Set $U := D(I^-(p))$. Suppose there is a spacelike line segment in the boundary of $U$ in $\Mink_-(D(p))$. Let $\Delta$ be the line containing this segment. The closure of $\Delta$ in $\eu$ is the union of $\Delta$ and the first conjugate point of $D(p)$ in the past. This is a conformal $n$-sphere denoted by $S$.

Let $q \in I^+(p)$ and set $V := D(I^-(q))$. Then, $U$ is the intersection of $V$ with $I^-(D(p))$. 
Therefore, the segment $\sigma$ is in the boundary of $V$ in $\Mink_-(D(q))$. In particular, $\sigma$ is convex in $\Mink_-(D(q))$. By Lemma \ref{lemma: hyperboloid in an affine chart}, the intersection of $S$ with $\Mink_-(D(q))$ is a connected component of a two-sheeted hyperboloid denoted by $H$. Since, $H$ contains $\sigma$, it is convex in $\Mink_-(D(p))$. Therefore, $H \subset S$ intersects the lighcone of $D(p)$ in two distinct points. Contradiction.

\subsubsection{Proof of Property \ref{property 2}}

Let $p \in M$. We call $\mathcal{J}^-(p)$ the connected component of the Penrose boundary of $\Mink_-(D(p))$ equal to the regular part of $\partial I^-(D(p))$.

\begin{fact}
The image under $D$ of any photon going through $p$ admits a past endpoint in $\mathcal{J}^-(p)$.
\end{fact}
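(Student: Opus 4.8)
The plan is to trap the image of the past half of the photon inside a single affine chart, where it becomes a genuine lightlike ray, and then read off its endpoint from the explicit description of past lightcones in $\eeu$.

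\medskip

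First I would set things up. Let $\varphi$ be a photon through $p$ and let $\varphi^-$ be its past‑directed half, parametrised with $\varphi^-(0)=p$; being a past‑directed causal curve issued from $p$, it is contained in $J^-(p)$. Fix a point $q\in I^+(p)$. By the push‑up property of causality, $J^-(p)\subset I^-(q)$, hence $\varphi^-\subset I^-(q)$ and $D(\varphi^-)\subset D(I^-(q))$; by Lemma \ref{lemma: the past of p is contained in an affine chart} applied to $q$, this last set is contained in the affine chart $\Mink_-(D(q))$. On the other hand, since $D$ is conformal it maps $\varphi^-$ injectively onto a past‑directed lightlike geodesic arc of $\eeu$ issued from $D(p)$ (Lemma \ref{lemma: dev sends causal curve on causal curve}); let $\rho\colon[0,+\infty)\to\eeu$ be the past‑inextensible lightlike geodesic of $\eeu$ with $\rho(0)=D(p)$ that carries this arc, so that $D(\varphi^-)=\rho\bigl([0,\ell)\bigr)$ for some $\ell\in(0,+\infty]$. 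From the description of lightcones in the universal Einstein universe (Section \ref{sec: universal Einstein}), $\rho\bigl([0,\pi]\bigr)$ is a complete lightlike geodesic joining $D(p)$ to its past conjugate point $\sigma^{-1}(D(p))$, it is contained in $\partial I^-(D(p))$, and its interior $\rho\bigl((0,\pi)\bigr)$ is contained in the regular part $\mathcal{J}^-(p)$ of $\partial I^-(D(p))$.

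\medskip

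The heart of the matter is to show $\ell<\pi$, i.e.\ that the image photon does not wrap around to the conjugate point. Since $D$ is conformal it preserves chronology, and $\sigma^{-1}$ is a conformal transformation of $\eeu$, so $p\ll q$ forces $\sigma^{-1}(D(p))\ll\sigma^{-1}(D(q))$. Now $\Mink_-(D(q))=\Mink_0\bigl(\sigma^{-1}(D(q))\bigr)=\eeu\setminus\bigl(J^+(\sigma^{-1}(D(q)))\cup J^-(\sigma^{-1}(D(q)))\bigr)$, whose closure in $\eeu$ is $\eeu\setminus\bigl(I^+(\sigma^{-1}(D(q)))\cup I^-(\sigma^{-1}(D(q)))\bigr)$; since $\sigma^{-1}(D(p))$ is chronologically — not merely causally — related to $\sigma^{-1}(D(q))$, the point $\rho(\pi)=\sigma^{-1}(D(p))$ lies outside $\overline{\Mink_-(D(q))}$, hence $\rho$ avoids $\Mink_-(D(q))$ on a whole neighbourhood of the parameter $\pi$. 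Moreover $\rho$ is a causal curve, so it meets $J^+(\sigma^{-1}(D(q)))$ in a final segment and $J^-(\sigma^{-1}(D(q)))$ in an initial segment; therefore $\{\,s:\rho(s)\in\Mink_-(D(q))\,\}$ is a connected set, and since it contains $0$ it has the form $[0,b)$. Combining the two facts gives $b<\pi$, and from $D(\varphi^-)=\rho([0,\ell))\subseteq\Mink_-(D(q))$ we conclude $\ell\le b<\pi$.

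\medskip

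Finally, $\ell\in(0,\pi)$, so the past endpoint of $D(\varphi^-)$, namely $\lim_{s\to\ell^-}D(\varphi^-(s))=\rho(\ell)$, lies in $\rho\bigl((0,\pi)\bigr)\subseteq\mathcal{J}^-(p)$; note this is a bona fide point of $\eeu$ even though $\varphi^-$ itself need not converge in $M$, which is exactly the asserted "past endpoint in $\mathcal{J}^-(p)$''. The one delicate step is the second paragraph: the containment of $D(\varphi^-)$ in an affine chart missing the past conjugate point $\sigma^{-1}(D(p))$. This is precisely where the standing hypothesis that $M$ has no conjugate points enters — via Lemma \ref{lemma: the past of p is contained in an affine chart} — and it is what prevents the image photon from running all the way around the past lightcone; once this is granted, the conclusion is immediate from the explicit structure of past lightcones in $\eeu$.
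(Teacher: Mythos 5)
Your proof is correct, but it follows a genuinely different route from the paper's. The paper disposes of the Fact in one line: if $D(\varphi)$ had no past endpoint in $\mathcal{J}^-(p)$, the image geodesic would run at least as far as $\sigma^{-1}(D(p))$, so $D(\varphi)$ (which also extends slightly to the future of $p$) would contain a pair of conjugate points of $\eeu$; by the injectivity of $D$ on causal curves this pulls back to a pair of conjugate points of $M$, contradicting the standing hypothesis. You instead introduce an auxiliary point $q\in I^+(p)$, use Lemma \ref{lemma: the past of p is contained in an affine chart} to trap $D(\varphi^-)\subset D(I^-(q))$ inside the affine chart $\Mink_-(D(q))$, and observe that $\sigma^{-1}(D(p))\in I^-(\sigma^{-1}(D(q)))$ lies outside the closure of that chart, so the affine parameter of the image must stop strictly before $\pi$. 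Both arguments ultimately channel the no-conjugate-points hypothesis, but yours routes it through the already-established Lemma \ref{lemma: the past of p is contained in an affine chart} rather than arguing directly on the photon; the payoff is that you handle explicitly the borderline case where the image would terminate exactly at $\sigma^{-1}(D(p))$, which the paper's phrase ``contains conjugate points'' covers only because the photon also extends to the future of $p$. Two minor remarks: for a past-directed parametrisation of $\rho$, the parameters landing in $J^+(\sigma^{-1}(D(q)))$ form an \emph{initial} segment and those landing in $J^-(\sigma^{-1}(D(q)))$ a \emph{final} segment --- the reverse of what you wrote --- but this is harmless since the complement of an initial segment united with a final segment is an interval either way; and the connectedness detour is in fact unnecessary, since $[0,\ell)$ is already contained in the set of parameters sent into $\Mink_-(D(q))$ while a whole neighbourhood of $\pi$ is excluded from it, which gives $\ell<\pi$ at once.
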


\begin{proof}
If there is a photon $\varphi$ of $M$ going through $p$ such that $D(\varphi)$ has no past endpoint in $\mathcal{J}^-(p)$, then $D(\varphi)$ contains conjugate points. Contradiction.
\end{proof}

\begin{definition}\label{def: sphere associated to the past of a point}
We call $\Lambda^-(p)$ the subset of $\mathcal{J}^-(p)$ consisting in the past endpoints of the images under $D$ of the photons going through $p$.
\end{definition}

Notice that $\Lambda^-(p)$ is in bijection with the sphere $\mathrm{S}^-(p)$ of past lightlike directions at~$p$. We will see that this bijection is actually a homeomorphism.

\begin{proposition}\label{prop: characterization of the past of a point}
The chronological past of $p$ is conformally diffeomorphic to the regular domain of $\Mink_-(D(p))$ defined by $\Lambda^-(p)$.
\end{proposition}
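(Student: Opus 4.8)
The plan is to identify $D(I^-(p))$ inside the affine chart $\Mink_-(D(p))$ with the future-regular domain $\Omega^+(\Lambda^-(p))$, using the shadow characterization of regular domains established in Proposition \ref{prop: characterization of regular domains by shadows}. By Lemma \ref{lemma: the past of p is contained in an affine chart}, $U := D(I^-(p))$ is a future-complete causally convex open subset of $\Mink_-(D(p))$, and by Theorem \ref{thm: maximality of IPs} it is a future-regular domain; the only thing left is to pin down which one, namely to show it coincides with the domain cut out by $\Lambda^-(p) \subset \mathcal{J}^-(p)$. Since $\Mink_-(D(p)) = \Mink_0(\sigma^{-1}(D(p)))$ and $\mathcal{J}^-(p)$ is the past Penrose boundary of this affine chart, the domain $\Omega^+(\Lambda^-(p))$ (with the time-reversed conventions of Section \ref{sec: regular domains and shadows}) is exactly the set of points of $\Mink_-(D(p))$ whose shadows on $\mathcal{J}^-(p)$ are disjoint from $\Lambda^-(p)$.

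First I would show $U \subset \Omega^+(\Lambda^-(p))$. Take $q \in I^-(p)$; I claim $D(q)$ is not causally related to any point of $\Lambda^-(p)$. A point $\xi \in \Lambda^-(p)$ is the past endpoint of $D(\varphi)$ for some photon $\varphi$ through $p$; by Lemma \ref{lemma: D is injective on diamonds} applied to the diamond with top vertex $p$, the developing map is a homeomorphism of $J(p, q')$ onto $J(D(p), D(q'))$ for $q'$ slightly below $q$, and $D(q)$ lies in its interior $I(D(p), D(q'))$, which is contained in $I^-(D(p))$ and avoids a neighborhood of $\partial I^-(D(p)) \supset \{\xi\}$; more carefully, if $D(q)$ were causally related to $\xi$, then since $\xi \in \partial I^-(D(p))$ one gets $D(q) \in J^-(\xi) \subset J^-(\partial I^-(D(p)))$, forcing a past lightlike geodesic from $D(p)$ to $D(q)$ through $\xi$, hence from $p$ to $q$ in $M$ — but then $q \in \partial I^-(p)$, contradicting $q \in I^-(p)$. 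For the reverse inclusion, let $\bar q \in \Omega^+(\Lambda^-(p))$. Since $\Omega^+(\Lambda^-(p))$ is future-regular hence globally hyperbolic and future-complete in $\Mink_-(D(p))$, and since $U \subset \Omega^+(\Lambda^-(p))$ with $U$ future-complete, it suffices to produce a point of $U$ chronologically preceding $\bar q$, then invoke future-completeness of $U$: but any past-inextensible timelike curve through $\bar q$ must exit $\Omega^+(\Lambda^-(p))$ by crossing a degenerate hyperplane of $\Lambda^-(p)$, which by construction is the image under $D$ of the past lightcone of $p$ restricted to the chart; tracking this back through the homeomorphism $D$ on diamonds shows the curve meets $I^-(p)$, and then future-completeness of $U$ in $\Mink_-(D(p))$ gives $\bar q \in U$.

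The step I expect to be the main obstacle is the reverse inclusion $\Omega^+(\Lambda^-(p)) \subset U$, specifically controlling the boundary behavior: one must rule out that $\Omega^+(\Lambda^-(p))$ is strictly larger than $U$ "near the Penrose boundary", i.e. that the maximal extension $\hat U$ of Theorem \ref{thm: maximality of IPs} could be the regular domain of some proper closed subset of $\Lambda^-(p)$ or of a different achronal set. The cleanest route is probably to argue that $\partial U$ in $\Mink_-(D(p))$, being future-complete and convex, has a well-defined Penrose-boundary "trace" (the set of past endpoints of its lightlike rays), to show this trace is exactly $\Lambda^-(p)$ by using the \textbf{Fact} that every photon through $p$ develops to a geodesic with past endpoint in $\mathcal{J}^-(p)$ together with the fact that $M$ has no conjugate points (so distinct photons through $p$ give distinct endpoints, yielding the bijection $\mathrm{S}^-(p) \to \Lambda^-(p)$ and, via the local homeomorphism property of $D$ near $p$ and properness on diamonds, its continuity hence — being a continuous bijection of spheres — a homeomorphism), and finally to invoke that a future-regular domain is determined by its trace on the Penrose boundary (Proposition \ref{prop: characterization of regular domains by shadows}). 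Assembling these, $U$ and $\Omega^+(\Lambda^-(p))$ are both the future-regular domain with trace $\Lambda^-(p)$, hence equal, which is the assertion of Proposition \ref{prop: characterization of the past of a point}.
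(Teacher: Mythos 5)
Your overall strategy coincides with the paper's: identify $U := D(I^-(p))$ with the regular domain defined by $\Lambda^-(p)$ via the shadow characterization (Proposition \ref{prop: characterization of regular domains by shadows}), and close the argument with the maximality of the PIP (Theorem \ref{thm: maximality of IPs}). However, both of your key steps rest on incorrect causal claims. For the inclusion $U \subset \Omega^+(\Lambda^-(p))$ you assert that $D(q) \in J^-(\xi)$ with $\xi \in \partial I^-(D(p))$ forces a past lightlike geodesic from $D(p)$ through $\xi$ to $D(q)$, hence $q \in \partial I^-(p)$. This is false: already in Minkowski space a point of $I^-(D(p))$ can lie in $J^-(\xi)$ for a point $\xi$ of the past lightcone of $D(p)$, so no contradiction with $q \in I^-(p)$ arises this way (and your first variant fails too, since the closure of $I(D(p),D(q'))$ does meet $\partial I^-(D(p))$). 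The paper's actual argument is that the shadow $O(D(q),\mathcal{J}^-(p)) = J(D(p),D(q)) \cap \mathcal{J}^-(p)$ equals, by Lemma \ref{lemma: D is injective on diamonds}, the set $D\bigl(J(p,q) \cap (\partial I^-(p) \setminus \{p\})\bigr)$ of \emph{attained} values of $D$ along photons through $p$; since each point of $\Lambda^-(p)$ is the non-attained past endpoint of the development of such a photon and distinct generators of $\mathcal{J}^-(p)$ are disjoint, the shadow misses $\Lambda^-(p)$.

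For the reverse inclusion you claim that the degenerate hyperplanes attached to the points of $\Lambda^-(p)$ are "the image under $D$ of the past lightcone of $p$ restricted to the chart". This is not so: $D(\partial I^-(p)\setminus\{p\})$ lies on the Penrose boundary $\mathcal{J}^-(p)$ itself, not inside the affine chart $\Mink_-(D(p))$, so a past-inextensible causal curve crossing $\partial\Omega^+(\Lambda^-(p))$ cannot be "tracked back through $D$" as you describe. The paper instead follows an inextensible causal curve of the regular domain to its \emph{future} endpoint $y$ on $\mathcal{J}^-(p)$; the shadow characterization places $y$ strictly above the corresponding point $y_0 \in \Lambda^-(p)$ on its generator, so $y = D(q)$ for some $q \in \partial I^-(p)$, which forces the curve to enter $U \supset D(I^-(q))$ and hence to meet a Cauchy hypersurface of $U$; thus the regular domain is a Cauchy extension of $U$ and maximality concludes. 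Finally, your fallback principle that "a future-regular domain is determined by its trace on the Penrose boundary" does not follow from Proposition \ref{prop: characterization of regular domains by shadows} alone (distinct closed subsets of $\mathcal{J}^-(p)$ can define the same regular domain), so it cannot be invoked without further justification.
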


\begin{proof}
Set $U := D(I^-(p))$ and let $\hat{U}$ denote the regular domain of $\Mink_-(D(p))$ defined by $\Lambda^-(p)$. We prove that $\hat{U}$ is a Cauchy extension of $U$. Since $U$ is maximal, we will deduce that $U = \hat{U}$. 

First, we prove that $U \subset \hat{U}$. By Proposition \ref{prop: characterization of regular domains by shadows}, this consists in proving that for every $x \in U$, the shadow of $x$ on $\mathcal{J}^-(p)$, denoted $O(x, \mathcal{J}^-(p))$, is disjoint from $\Lambda(p)$. Let $x \in U$ and let $q \in I^-(p)$ be the unique point such that $x = D(q)$. Notice that $O(x, \mathcal{J}^-(p))$ is the intersection of $J(D(p), D(q))$ with $\mathcal{J}^-(p)$. By Lemma \ref{lemma: D is injective on diamonds}, this intersection is exactly the image under $D$ of $J(p,q) \cap (\partial I^-(p) \backslash \{p\})$. Hence, $O(x, \mathcal{J}^-(p))$ is disjoint from $\Lambda^-(p)$. Thus, $x \in \hat{U}$.

Now, we fix a Cauchy hypersurface $\Sigma$ of $U$. Let $\gamma$ be an inextensible causal curve of $\hat{U}$ going through a point $x \in \hat{U}$. Let $y$ be the accumulation point of $\gamma$ in $\mathcal{J}^-(p)$. By Proposition \ref{prop: characterization of regular domains by shadows}, the point $y$ is in the future of some point $y_0 \in \Lambda^-(p)$. Therefore, there is a unique point $q$ in the lightcone of $p$ such that $D(q) = y$. It follows that $\gamma$ meets $U$ and thus $\Sigma$. We deduce that $\hat{U}$ is a Cauchy extension of $U$.
\end{proof}

\begin{corollary}
The set $\Lambda^-(p)$ is homeomorphic to $\mathbb{S}^{n-1}$.
\end{corollary}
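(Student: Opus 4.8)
The plan is to combine the bijection already established in Proposition~\ref{prop: characterization of the past of a point} between $\Lambda^-(p)$ and the sphere $\mathrm{S}^-(p)$ of past lightlike directions at $p$ with the continuity properties of the developing map. First I would recall that $\mathrm{S}^-(p)$, being the space of past-pointing null directions in the tangent space $T_pM$ (equivalently, the celestial sphere at $p$), is canonically homeomorphic to $\mathbb{S}^{n-1}$: in a conformally flat chart around $p$, the past null cone minus its vertex retracts onto a round $(n-1)$-sphere. So the real content of the corollary is that the natural bijection $\mathrm{S}^-(p)\to\Lambda^-(p)$ noted just after Definition~\ref{def: sphere associated to the past of a point} is in fact a homeomorphism.

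The key steps, in order, would be as follows. (1) Describe the map $\Phi:\mathrm{S}^-(p)\to\mathcal{J}^-(p)$ explicitly: a past null direction $v$ at $p$ determines a unique photon $\varphi_v$ of $M$ through $p$ tangent to $v$; its image $D(\varphi_v)$ is a past-directed lightlike geodesic of $\eeu$ through $D(p)$, which (by the no-conjugate-points hypothesis, via the preceding Fact) admits a past endpoint in $\mathcal{J}^-(p)$; set $\Phi(v)$ to be that endpoint. (2) Argue continuity: the dependence of $\varphi_v$ on $v$ is smooth because $D$ is a local diffeomorphism and geodesics depend smoothly on initial data; and passing to the past endpoint in $\mathcal{J}^-(p)$ is continuous because, in the affine chart $\Mink_-(D(p))$ with the cylindrical model $\mathbb{S}^{n-2}\times\R$ of $\mathcal{J}^-(p)$ from Section~\ref{sec: conformal compactification of Minkowski spacetime}, the past endpoint of the image photon is cut out by the incidence relation between a null geodesic and the Penrose boundary, which varies continuously with the geodesic's null direction at $D(p)$. (3) Injectivity is already clear (distinct null directions at $p$ give distinct photons through $p$, which map to distinct photons through $D(p)$ since $D$ is a local diffeomorphism, hence distinct endpoints on $\mathcal{J}^-(p)$ because through a point of $\eeu$ two distinct photons meet only at conjugate points). (4) Surjectivity onto $\Lambda^-(p)$ holds by the very definition of $\Lambda^-(p)$. (5) Conclude: $\Phi$ is a continuous bijection from the compact space $\mathrm{S}^-(p)\cong\mathbb{S}^{n-1}$ to the Hausdorff space $\Lambda^-(p)\subset\mathcal{J}^-(p)$, hence a homeomorphism; therefore $\Lambda^-(p)$ is homeomorphic to $\mathbb{S}^{n-1}$.

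Alternatively, and perhaps more cleanly for the write-up, I would invoke the Cauchy-extension identification from Proposition~\ref{prop: characterization of the past of a point}: $I^-(p)$ is conformally the regular domain $\hat U=\Omega^-(\Lambda^-(p))$ of $\Mink_-(D(p))$, and by Theorem~\ref{thm: maximality of IPs} this is maximal, so its future causal boundary is identified with $\Lambda^-(p)$ together with the apex $\sigma^{-1}(D(p))$ removed; but the future causal boundary of the PIP $I^-(p)$ of $M$ is, by the causal-boundary analysis recalled in Section~\ref{sec: causal completion}, homeomorphic to a Cauchy hypersurface of $I^-(p)$, namely $\mathbb{S}^{n-1}$ (a small sphere around $p$). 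Matching these two descriptions gives $\Lambda^-(p)\cong\mathbb{S}^{n-1}$ directly.

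The main obstacle I anticipate is the continuity of the endpoint map at step (2): one must check that as the null direction $v$ at $p$ varies, the image photon $D(\varphi_v)$ does not ``escape to infinity'' or fail to accumulate transversally on $\mathcal{J}^-(p)$ — this is exactly where the no-conjugate-points hypothesis is used, and where Lemma~\ref{lemma: the past of p is contained in an affine chart} (which confines $D(I^-(p))$ to the single affine chart $\Mink_-(D(p))$) makes the cylindrical-model computation of the endpoint a genuinely continuous, well-defined operation rather than a multivalued or discontinuous one. Everything else is formal once that continuity and the compactness-Hausdorff closing argument are in place.
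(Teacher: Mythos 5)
The crux of this corollary is precisely the continuity of the bijection $\mathrm{S}^-(p)\to\Lambda^-(p)$ (the bijection itself is already noted right after Definition \ref{def: sphere associated to the past of a point}), and your justification of that continuity in step (2) rests on a false picture. The developed photons $D(\varphi_v)$ through $D(p)$ are not transverse to the Penrose boundary $\mathcal{J}^-(p)$: since $\mathcal{J}^-(p)$ is by definition the regular part of $\partial I^-(D(p))$, these photons are \emph{contained} in it --- they are exactly the fibers of the fibration $\mathcal{J}^-(p)\to\mathrm{S}^-(D(p))$. There is no ``incidence relation'' cutting out $\Phi(v)$; the point $\Phi(v)$ sits somewhere along the fiber over the direction of $D(\varphi_v)$, at a height determined by how far the photon $\varphi_v$ extends inside $M$ before leaving it. That is a global property of $M$, not a consequence of the smooth dependence of geodesics on initial data, and a priori it could jump as $v$ varies. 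So step (2) is a genuine gap, and it is exactly the part of the statement that requires proof. Your alternative route does not close it either: a Cauchy hypersurface of $I^-(p)$ is a ball, not a sphere (a small sphere around $p$ is not contained in $I^-(p)$), and the identification of the causal boundary of the regular domain $\Omega^-(\Lambda^-(p))$ with $\Lambda^-(p)$ minus an apex is itself unproven and delicate, since the causal boundary of a regular domain also involves its initial singularity.

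The paper obtains the continuity from convexity by changing affine chart: pick $q\in I^+(p)$, so that $J^-(p)\subset I^-(q)$ and $D(I^-(q))$ is a \emph{proper} regular domain of $\Mink_-(D(q))$ by Proposition \ref{prop: characterization of the past of a point}. In that chart $D(p)$ is an interior point and $\Lambda^-(p)$ is the trace of the affine lightcone of $D(p)$ on the convex boundary $\partial D(I^-(q))$; each lightlike line through $D(p)$ meets this boundary in exactly one point, and the resulting intersection map from the sphere of lightlike directions at $D(p)$ to the boundary of a convex body is automatically a homeomorphism onto its image. If you want to keep your direct approach you would need to supply an actual continuity (or closed-graph) argument for the endpoint map; the convexity detour is what makes this free. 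As a side remark, the sphere of past lightlike directions at a point of an $n$-dimensional spacetime is $\mathbb{S}^{n-2}$, not $\mathbb{S}^{n-1}$, so the exponent in the statement (which you reproduce) appears to be off by one; this does not affect the comparison of the arguments.
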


\begin{proof}
Let $q \in I^+(p)$. We have $J^-(p) \subset I^-(q)$. Then, $D(J^-(p))$ is the intersection of $J^-(D(p))$ with the regular domain $D(I^-(q))$ in the affine chart $\Mink_-(D(q))$. It follows that $\Lambda^-(p)$ is the intersection of the lightcone of $D(p)$ with the boundary of $D(I^-(q))$ in $\Mink_-(D(q))$. By Proposition \ref{prop: characterization of the past of a point}, the regular domain $D(I^-(q))$ is proper. Hence, every lightlike line of $\Mink_-(D(q))$ going through $D(p)$ meets the boundary of $D(I^-(q))$ in a unique point (see \cite[Chap. 2, Sec. 3, Prop. 2.3.7]{Smai2022}). The corollary follows.
\end{proof}

Note that Properties \ref{prop: no spacelike segment for the PIPs} and \ref{property 2} does not hold for the TIPs. Indeed, a Misner domain of an affine chart $\Mink_-(p)$ is a conformally flat spacetime $M$ for which $p$ is a TIP. The past of $p$ in $M$ is exactly the Misner domain which does not satisfy any of Properties \ref{prop: no spacelike segment for the PIPs} and \ref{property 2}. We describe other families of examples in \cite[Section 6]{smai2023causal}.

\paragraph{Data availability statement.} Data sharing not applicable to this article as no datasets were generated during the current study.


\bibliographystyle{plain}
\bibliography{biblio}

\begin{thebibliography}{10}

\bibitem{andersson2012}
Lars Andersson, Thierry Barbot, François Béguin, and Abdelghani Zeghib.
\newblock Cosmological time versus {CMC} time in spacetimes of constant
  curvature.
\newblock {\em Asian Journal of Mathematics}, 16(1):37--88, 03 2012.

\bibitem{barbot}
Thierry Barbot.
\newblock {Globally Hyperbolic Flat Spacetimes}.
\newblock {\em {Journal of Geometry and Physics}}, 53,no.2:123--165, 2005.

\bibitem{bonsante2005flat}
Francesco Bonsante.
\newblock Flat spacetimes with compact hyperbolic cauchy surfaces.
\newblock {\em Journal of Differential Geometry}, 69(3):441--521, 2005.

\bibitem{Chalumeau2024}
Adam Chalumeau.
\newblock {\em Métriques conformément invariantes en géométrie
  pseudo-Riemannienne}.
\newblock PhD thesis, 2024.
\newblock Thèse de doctorat dirigée par Charles Frances, Université de
  Strasbourg.

\bibitem{francesarticle}
Charles Frances.
\newblock Une preuve du th\'{e}or\`eme de {L}iouville en g\'{e}om\'{e}trie
  conforme dans le cas analytique.
\newblock {\em Enseign. Math. (2)}, 49(1-2):95--100, 2003.

\bibitem{Kronheimer}
R.~{Geroch}, E.~H. {Kronheimer}, and R.~{Penrose}.
\newblock {Ideal Points in Space-Time}.
\newblock {\em Proceedings of the Royal Society of London Series A},
  327(1571):545--567, April 1972.

\bibitem{goldman}
William Goldman.
\newblock Geometric structures on manifolds.
\newblock {\em University of Maryland, Lecture Notes}, 2018.

\bibitem{misner1967taub}
Charles~W Misner.
\newblock Taub-nut space as a counterexample to almost anything.
\newblock {\em Relativity theory and astrophysics}, 1:160, 1967.

\bibitem{oneill}
Barrett O'neill.
\newblock {\em Semi-Riemannian geometry with applications to relativity}.
\newblock Academic press, 1983.

\bibitem{Penrose}
Roger Penrose.
\newblock Asymptotic properties of fields and space-times.
\newblock {\em Phys. Rev. Lett.}, 10:66--68, Jan 1963.

\bibitem{salveminithesis}
Clara~Rossi Salvemini.
\newblock {\em Espace-temps globalement hyperboliques conform{\'e}ment plats}.
\newblock PhD thesis, Universit{\'e} d'Avignon, 2012.

\bibitem{Salvemini2013Maximal}
Clara~Rossi Salvemini.
\newblock Maximal extension of conformally flat globally hyperbolic spacetimes.
\newblock {\em Geometriae Dedicata}, 174:235--260, 2013.

\bibitem{smai2023enveloping}
Rym Smaï.
\newblock Enveloping space of a globally hyperbolic conformally flat spacetime.
\newblock {\em to appear in Annales de l'institut Fourier}.

\bibitem{smai2022anosov}
Rym Smai.
\newblock Anosov representations as holonomies of globally hyperbolic spatially
  compact conformally flat spacetimes.
\newblock {\em Geometriae Dedicata}, 216(4):1--36, 2022.

\bibitem{Smai2022}
Rym Smai.
\newblock {\em Espace-temps conformément plats globalement hyperboliques
  maximaux avec des photons complets}.
\newblock PhD thesis, 2022.
\newblock Thèse de doctorat dirigée par Barbot, Thierry Mathématiques
  Avignon 2022.

\bibitem{smai2023causal}
Rym Sma\"i.
\newblock Causal completion of globally hyperbolic conformally flat spacetimes.
\newblock {\em J. Math. Phys.}, 65(10):Paper No. 102501, 19, 2024.

\end{thebibliography}

\end{document}